\documentclass[12pt]{amsart}
\usepackage[colorlinks,
            linkcolor=black,
            anchorcolor=blue,
            citecolor=black]{hyperref}
\usepackage{graphicx}
\usepackage{amsmath}
\usepackage{amssymb}
\usepackage{setspace}
\usepackage{amsthm}
\marginparwidth -1cm \oddsidemargin 0cm \evensidemargin 0cm
\topmargin 0pt \textheight 230mm \textwidth 169mm

\vfuzz2pt 
\hfuzz2pt 
\newtheorem{thm}{Theorem}[section]

\newtheorem{lem}[thm]{Lemma}

\theoremstyle{definition}

\theoremstyle{remark}
\newtheorem{rem}[thm]{Remark}
\theoremstyle{conclusion}

\theoremstyle{conjecture}

\numberwithin{equation}{section}

\newcommand{\lr}{\left(}
\newcommand{\rr}{\right)}

\newcommand{\be}{\begin{equation}}
\newcommand{\ee}{\end{equation}}

\newcommand{\De}{\Delta}

\newcommand{\la}{\lambda}

\newcommand{\Ns}{\frac{N}{2}+s}

\newcommand{\ra}{{\mbox{$\rightarrow$}}}


\begin{document}
\title[Direct methods for pseudo-relativistic Schr\"{o}dinger operators]{Direct methods for pseudo-relativistic Schr\"{o}dinger operators}

\author{Wei Dai$^\dag$, Guolin Qin$^\ddag$, Dan Wu$^*$}

\address{$^\dag$ School of Mathematical Sciences, Beihang University (BUAA), Beijing 100083, P. R. China}
\email{weidai@buaa.edu.cn}

\address{$^\ddag$ Institute of Applied Mathematics, Chinese Academy of Sciences, Beijing 100190, and University of Chinese Academy of Sciences, Beijing 100049, P. R. China}
\email{qinguolin18@mails.ucas.ac.cn}

\address{$^*$ College of Mathematics, Hunan University, Changsha 410082, P. R. China}
\email{danwu@hnu.edu.cn}

\thanks{$^*$ Corresponding author: Dan Wu at danwu@hnu.edu.cn. \\ Wei Dai is supported by the NNSF of China (No. 11971049) and the Fundamental Research Funds for the Central Universities. Dan Wu was supported by the NSFC (Grant No. 11901183), NSF of Hunan Province (No. 2017JJ3028) and Young Teachers Program of Hunan University (531118040104).}

\begin{abstract}
In this paper, we establish various maximal principles and develop the direct moving planes and sliding methods for equations involving the physically interesting (nonlocal) pseudo-relativistic Schr\"{o}dinger operators $(-\Delta+m^{2})^{s}$ with $s\in(0,1)$ and mass $m>0$. As a consequence, we also derive multiple applications of these direct methods. For instance, we prove monotonicity, symmetry and uniqueness results for solutions to various equations involving the operators $(-\Delta+m^{2})^{s}$ in bounded domains, epigraph or $\mathbb{R}^{N}$, including pseudo-relativistic Schr\"odinger equations, 3D boson star equations and the equations with De Giorgi type nonlinearities.
\end{abstract}
\maketitle {\small {\bf Keywords:} Pseudo-relativistic Schr\"{o}dinger operators; Inhomogeneous fractional Laplacians; 3D boson star equations; Direct methods of moving planes; Direct sliding methods.\\

{\bf 2010 MSC} Primary: 35R11; Secondary: 35B06, 35B53.}

\section{Introduction}
\subsection{Background and setting of the problem}
The pseudo-relativistic Schr\"odinger equations
\begin{equation}\label{NLS_pseudo}
i \frac{\partial \Psi}{\partial t}=\sqrt{-c^{2} \Delta+m^{2} c^{4}} \Psi-m c^{2} \Psi
-f\lr|\Psi(t, x)|^{2}\rr\Psi, \quad \forall \,\, (t,x)\in\mathbb{R}_{+} \times \mathbb{R}^N,
\end{equation}
in which $c$ denotes the speed of the light, $m>0$ represents the particle mass, $\Psi(t,x)$ is a complex-valued wave field and $f:[0, \infty) \rightarrow \mathbb{R}$ is a nonlinear function, describes, from the physical viewpoint, the dynamics of systems consisting of identical spin-$0$ bosons whose motions are relativistic (see \cite{LT,LY}), such as boson stars. We refer to \cite{A,CS1,DSS,ES,FJL,FJL1,FL,Le1,Le,LL,LT} for the rigorous derivation of the equations and the study of their dynamical properties.

We say \eqref{NLS_pseudo} is pseudo-relativistic Schr\"odinger equations in the sense that, when passing to the limit as $c\rightarrow+\infty$ in \eqref{NLS_pseudo}, we derive the more familiar nonlinear Schr\"odinger equations
\begin{equation}\label{NLS}
i \frac{\partial \Psi}{\partial t}=-\frac{1}{2m}\De\Psi
-f\lr|\Psi(t,x)|^{2}\rr\Psi \quad \text { in } \mathbb{R}_{+} \times \mathbb{R}^N
\end{equation}
as the non-relativistic limit of equations \eqref{NLS_pseudo} (see e.g. \cite{CS1}).

In particular, if we consider the standing waves solution $\Psi(t, x)=e^{i \omega t}u(x)$ to \eqref{NLS_pseudo}, where $\omega \in \mathbb{R}$ is the frequency and $u(x)$ is real-valued, then equation \eqref{NLS_pseudo} becomes a static semi-linear elliptic problems:
\begin{equation}\label{NLS_sta}
\sqrt{-c^{2}\Delta+m^{2} c^{4}} u(x)+(\omega-m c^{2}) u(x)=f(|u(x)|^{2})u(x), \quad \forall \,\, x\in\mathbb{R}^{N},
\end{equation}
which is the relativistic version of the limit equations:
\begin{equation}\label{NLS_staL}
-\frac{1}{2m}\Delta u(x)+\omega u(x)=f(|u(x)|^{2})u(x), \quad \forall \,\, x\in\mathbb{R}^{N}.
\end{equation}

We will also consider the 3D pseudo-relativistic Hartree equation:
\begin{equation}\label{H-pseudo}
  i \frac{\partial \Psi}{\partial t}=\sqrt{-c^{2} \Delta+m^{2} c^{4}} \Psi-m c^{2} \Psi-\left(\frac{1}{|x|}\ast|\Psi|^{2}\right)\Psi \quad \text{in} \,\, \mathbb{R}_{+}\times \mathbb{R}^{3},
\end{equation}
where $\Psi(t,x)$ is a complex-valued wave field, and the symbol $\ast$ stands for convolution on $\mathbb{R}^{3}$. The operator $\sqrt{-c^{2} \Delta+m^{2} c^{4}}-mc^{2}$ is the kinetic energy operator of a relativistic particle with mass $m>0$, and the convolution kernel $\frac{1}{|x|}$ represents the Newtonian gravitational potential in appropriate physical units. From the physical viewpoint, pseudo-relativistic Hartree equation \eqref{H-pseudo} arises as an effective dynamical description for an $N$-body quantum system of relativistic bosons with two-body interaction given by Newtonian gravity (see \cite{ES}, see also \cite{FJL,FL}). Such a system is a model system for a pseudo-relativistic boson star.

Pseudo-relativistic Hartree equation \eqref{H-pseudo} is the relativistic version of the limit equation (as $c\rightarrow+\infty$):
\begin{equation}\label{H-pseudoL}
  i \frac{\partial \Psi}{\partial t}=-\frac{1}{2m}\Delta\Psi-\left(\frac{1}{|x|}\ast|\Psi|^{2}\right)\Psi \quad \text{in} \,\, \mathbb{R}_{+}\times \mathbb{R}^{3}.
\end{equation}
By inserting the solitary wave solution $\Psi(t,x)=e^{i \omega t}u(x)$ into \eqref{H-pseudo}, we derive the following static semi-linear scalar field equation:
\begin{equation}\label{H-pseudo-sta}
  \sqrt{-c^{2}\Delta+m^{2} c^{4}} u(x)+(\omega-m c^{2}) u(x)=\left(\frac{1}{|x|}\ast|u|^{2}\right)u(x), \quad \forall \,\, x\in\mathbb{R}^{3},
\end{equation}
whose non-relativistic limit equation is:
\begin{equation}\label{H-pseudo-staL}
 -\frac{1}{2m}\Delta u(x)+\omega u(x)=\left(\frac{1}{|x|}\ast|u|^{2}\right)u(x), \quad \forall \,\, x\in\mathbb{R}^{3}.
\end{equation}

One should note that, solutions to both classical and fractional nonlinear Schr\"odinger equations \eqref{NLS} or Hartree equations \eqref{H-pseudoL} enjoy some kind of scaling invariance if we consider the power type nonlinearities $f(t)=t^{\frac{p-1}{2}}$ or general Hartree type nonlinearities. However, since the differential operators $\sqrt{-c^{2}\Delta+m^{2} c^{4}}$ are inhomogeneous, no such scaling invariance works for pseudo-relativistic Schr\"odinger equations \eqref{NLS_pseudo} and Hartree equations \eqref{H-pseudo} with any $c,m\in(0,+\infty)$.

\medskip

In this paper, we are mainly concerned with the normalized (i.e., $c=1$) general static pseudo-relativistic Schr\"odinger equations \eqref{NLS_pseudo} and Hartree equations \eqref{H-pseudo} ($s=\frac{1}{2}\rightarrow s\in(0,1)$):
\begin{equation}\label{NLS_staN}
\left(-\Delta+m^{2}\right)^{s}u(x)+(\omega-m)u(x)=f(|u(x)|^{2})u(x), \quad \forall \,\, x\in\mathbb{R}^{N},
\end{equation}
and
\begin{equation}\label{H-pseudo-staN}
  \left(-\Delta+m^{2}\right)^{s}u(x)+(\omega-m)u(x)=\left(\frac{1}{|x|}\ast|u|^{2}\right)u(x), \quad \forall \,\, x\in\mathbb{R}^{3},
\end{equation}
please see Theorem \ref{T:sub_symm} and Theorem \ref{RSO-HC-Symm}. Furthermore, by developing direct moving planes and sliding methods, we will investigate the following generalized equations involving general pseudo-relativistic operators $(-\Delta+m^{2})^{s}$ ($0<s<1$):
\begin{equation}\label{gPDE}
  (-\Delta+m^{2})^{s}u(x)=f(x,u(x))
\end{equation}
with various kinds of nonlinearities (including De Giorgi type) and in various types of regions, such as bounded domains $D$, epigraph $\Omega$ and whole-space $\mathbb{R}^{N}$ (see Section 2 and 3).

\medskip

The general pseudo-relativistic operators $(-\Delta+m^{2})^{s}$ with $0<s<1$ is defined in nonlocal way by (refer to e.g. \cite{A,CMS,FF1,FF2,H})
\begin{equation}\label{Definition}
\left(-\Delta+m^{2}\right)^{s}u(x):=c_{N,s} m^{\Ns} P.V. \int_{\mathbb{R}^{N}} \frac{u(x)-u(y)}{|x-y|^{\Ns}}K_{\Ns}(m|x-y|)d y+m^{2 s}u(x)
\end{equation}
for every $x \in \mathbb{R}^{N}$, where $P.V.$ stands for Cauchy principal value, and
\begin{equation}
c_{N, s}=2^{1-\frac{N}{2}+s}\pi^{-\frac{N}{2}}\frac{s(1-s)}{\Gamma(2-s)}.
\end{equation}
Here $K_{\nu}$ denotes the modified Bessel function of the second kind with order $\nu$ (see \cite{EMOT,FF1,FF2}), which solves the equation
\begin{equation}
r^{2} K_{\nu}^{\prime \prime}+r K_{\nu}^{\prime}-\left(r^{2}+\nu^{2}\right) K_{\nu}=0.
\end{equation}
Note that $K_{\nu}(r)$ has the integral representation
$$
K_{\nu}(r)=\int_{0}^{\infty} e^{-r \cosh t} \cosh (\nu t) dt.
$$
It is easy to verify that $K_{\nu}(r)$ is a real and positive function satisfying $K'_{\nu}(r)<0$ for all $r>0$, and $K_{\nu}=K_{-\nu}$ for $\nu<0$. We have (see \cite{EMOT,FF1,FF2}), for $\nu>0$,
\begin{equation}\label{B-asymp0}
  K_{\nu}(r) \sim \frac{\Gamma(\nu)}{2}\left(\frac{r}{2}\right)^{-\nu},
\end{equation}
as $r \rightarrow 0$. Hence there exists a small $r_0>0$ and two constants $C_0>c_0>0$ such that
\begin{equation}\label{Bessel_asy0}
\frac{c_0}{r^\nu}\leq K_\nu(r)\leq\frac{C_0}{r^\nu}
\end{equation}
for all $r\leq r_0$. We also have, for $\nu>0$,
\begin{equation}\label{B-asymp1}
  K_{\nu}(r) \sim \frac{\sqrt{\pi}}{\sqrt{2}} r^{-\frac12} e^{-r},
\end{equation}
as $r \rightarrow\infty$. Hence there exists a large $R_\infty>0$ and two constants $C_\infty>c_\infty>0$ such that
\begin{equation}\label{Bessel_asy1}
\frac{c_\infty}{r^{\frac12}e^r}\leq K_\nu(r)\leq\frac{C_\infty}{r^{\frac12}e^r}
\end{equation}
for all $r\geq R_\infty$. Let
\begin{equation}\label{space}
\mathcal{L}_s(\mathbb{R}^{N}):=\left\{u : \mathbb{R}^{N} \rightarrow \mathbb{R} \,\Bigg|\, \int_{\mathbb{R}^{N}} \frac{e^{-|x|}|u(x)|}{1+|x|^{\frac{N+1}{2}+s}}dx<+\infty\right\}.
\end{equation}
Then, by asymptotic properties of $K_{\nu}(r)$ as $r\rightarrow0$ and $+\infty$, one can easily verify that for any $u \in \mathcal{L}_s(\mathbb{R}^{N})\cap C_{\text {loc }}^{1,1}(\mathbb{R}^{N})$, the integral on the right hand side of the definition \eqref{Definition} is well-defined. Hence $(-\Delta+m^{2})^{s}u$ makes sense for all functions $u \in \mathcal{L}_s(\mathbb{R}^{N})\cap C_{\text {loc }}^{1,1}(\mathbb{R}^{N})$. The general pseudo-relativistic operators $(-\Delta+m^{2})^{s}$ ($0<s<1$) can also be expressed equivalently via extension method, please see e.g. \cite{FF1,FF2} and the references therein.

When $m\rightarrow0+$, the general pseudo-relativistic operators $(-\Delta+m^{2})^{s}$ degenerates into the familiar fractional Laplacian $(-\Delta)^{s}$, which is also a nonlocal pseudo-differential operator given by (see e.g. \cite{CG,CLL,CLM,CLZ,CQ,DQ})
\begin{eqnarray}\label{def-fracL}
(-\Delta)^s u(x) = C_{N, s} \, P.V. \int_{\mathbb{R}^N} \frac{u(x)-u(y)}{|x-y|^{N+2s}} dy.
\end{eqnarray}
Fractional Laplacian $(-\Delta)^{s}$ ($0<s<1$) is well-defined for any $u\in C^{1,1}_{loc}(\mathbb{R}^{N})\cap\dot{L}_{s}(\mathbb{R}^{N})$ with the function spaces
\[\dot{L}_{s}(\mathbb{R}^{N}):=\left\{ u: \mathbb{R}^{N} \rightarrow \mathbb{R} \,\Big|\, \int_{\mathbb{R}^{N}} \frac{|u(x)|}{1 + |x|^{N+2s}}dx <+\infty \right\}.\]
It can also be defined equivalently through Caffarelli and Silvestre's extension method (refer to \cite{CS}, see also \cite{BCPS,CT,S}).

\medskip

In recent years, fractional order operators have attracted more and more attentions. Besides various applications in fluid mechanics, molecular dynamics, relativistic quantum mechanics of stars (see e.g. \cite{CV,Co}) and conformal geometry (see e.g. \cite{CG}), it also has many applications in probability and finance (see \cite{Be,CT}). The fractional Laplacians can be understood as the infinitesimal generator of a stable L\'{e}vy diffusion process (see \cite{Be}). The general pseudo-relativistic operators with singular potentials describes a spin zero relativistic particle of charge $e$ and mass $m$ in the Coulomb field of an infinitely heavy nucleus of charge $Z$ (see e.g. \cite{FF1,FF2,H,Lieb}).

\smallskip

However, the non-locality virtue of $(-\Delta+m^{2})^{s}$ ($m\geq0$) makes it difficult to investigate. To overcome this difficulty, we basically have two approaches. One way is to define $(-\Delta+m^{2})^{s}$ ($m\geq0$) via extension method, so as to reduce the nonlocal problem into a local one in higher dimensions. Another approach is to derive the integral representation formulae of solutions (see \cite{CLO,CLM}). After establishing the equivalence between the fractional order equation and its corresponding integral equation (with Bessel kernel if $m>0$, Riesz kernel if $m=0$), one can apply the method of moving planes (spheres) in integral forms, the method of scaling spheres or regularity lifting methods to study various properties of solutions to the fractional order equations involving operators $(-\Delta+m^{2})^{s}$. These two methods have been applied successfully to study equations involving nonlocal fractional operators, and a series of fruitful results have been derived (see \cite{BCPS,CLO,CS,CT,DFHQW,DL,DLQ,DQ0,DQ3,FLS,MZ,S} and the references therein).

\smallskip

Nevertheless, when using the above two approaches, we often need to impose some additional conditions on the solutions, which would not be necessary if we consider the pseudo-differential equation directly. Moreover, these two approaches do not work for fully nonlinear nonlocal operators, for instance, the fractional $p$-Laplacians (see e.g. \cite{CQ} for more details).

\smallskip

Therefore, it is desirable for us to develop direct methods without going through extension methods or integral representation formulae. Direct moving planes (spheres) method and sliding method have been introduced for fractional Laplacian $(-\Delta)^{s}$ ($s\in(0,1)$) in \cite{CLL,CW2,DSV} and for fractional $p$-Laplacians $(-\Delta)^{s}_{p}$ in \cite{CL2,CLiu,CW1,CW3}, and have been applied to obtain symmetry, monotonicity and uniqueness of solutions to various equations involving $(-\Delta)^{s}$ or $(-\Delta)^{s}_{p}$. The goal of this paper is to establish \emph{direct methods} for the general pseudo-relativistic Schr\"{o}dinger operators $(-\Delta+m^{2})^{s}$ and derive \emph{various applications}.

\subsection{Main results}
In this paper, inspired by the direct methods for $(-\Delta)^{s}$ and $(-\Delta)^{s}_{p}$ established in \cite{CL2,CLiu,CLL,CW1,CW2,CW3,DSV}, we will introduce \emph{direct moving planes} and \emph{sliding methods} for the pseudo-relativistic Schr\"{o}dinger operators $(-\Delta+m^{2})^{s}$ with general $s\in(0,1)$ and $m>0$.

\smallskip

Being essentially different from the homogeneous fractional Laplacians $(-\Delta)^{s}$ ($s\in(0,1)$), since there is a \emph{modified Bessel function of the second kind $K_{\frac{N}{2}+s}$} in the definition \eqref{Definition} of inhomogeneous fractional operators $(-\Delta+m^{2})^{s}$, it \emph{does not possess} any \emph{invariance properties} under \emph{Kelvin-type or scaling transforms}. In order to circumvent these difficulties, we need to implement some new arguments and careful analysis (see Section 2 and 3, especially, the proofs of \emph{Theorem \ref{MP_anti_ubdd}, Lemma \ref{lem-MP}, Theorem \ref{MP-Ubdd} and Theorem \ref{32NRP}}). In particular, one should note that, being different from the \emph{maximum principles (in unbounded open sets $D$)} for fractional Laplacians $(-\Delta)^{s}$ or fractional $p$-Laplacians $(-\Delta)^{s}_{p}$ established in Dipierro, Soave and Valdinoci \cite{DSV}, Chen and Liu \cite{CLiu} and Chen and Wu \cite{CW2} (some \emph{exterior conditions on $D$} are necessary therein), \emph{we do not need to impose any assumption on $D$ in the maximum principle in our Theorem \ref{MP-Ubdd}, i.e., $D\subseteq\mathbb{R}^{N}$ can be any open set (possibly unbounded and disconnected)}. The assumption \eqref{32NRP-con} on $D$ in Theorem \ref{32NRP} is also much weaker than the exterior conditions on $D$ assumed in \cite{CLiu,CW2,DSV}.

\medskip

The main contents and results in our paper are arranged as follows.

\smallskip

In Section 2, we will establish various maximum principles for anti-symmetric functions and develop the direct method of moving planes for the inhomogeneous fractional operators $(-\Delta+m^{2})^{s}$. As applications, symmetry, monotonicity and uniqueness results for solutions to various equations involving the general pseudo-relativistic Schr\"{o}dinger operators were derived. For more literatures on the methods of moving planes (spheres), please refer to \cite{CD,CDQ,CGS,CL,CL1,CL0,CL2,CLL,CLO,CLZ,CW3,CY,DFHQW,DFQ,DL,DLQ,DQ,GNN1,JLX,LiC,Li,Lin,LZ,LZ1,MZ,Pa,Serrin,WX} and the references therein. For the method of scaling spheres, please see \cite{DQ0,DQ3} and the references therein.

\smallskip

Section 3 is devoted to proving various maximum principles in both bounded or unbounded open sets and developing the direct sliding methods for the general pseudo-relativistic Schr\"{o}dinger operators $(-\Delta+m^{2})^{s}$. As applications, monotonicity and uniqueness results for solutions to various equations involving the general pseudo-relativistic Schr\"{o}dinger operators were derived. The sliding method was developed by Berestycki and Nirenberg (\cite{BN1,BN2,BN3}). It was used to establish qualitative properties of solutions for PDEs (mainly involving the regular Laplacian $-\Delta$), such as symmetry, monotonicity and uniqueness $\cdots$. The key ingredients are different forms of maximum principles. The main idea is to compare values of solution to the equation at two different points, between which one point is obtained from the other by sliding the region in a fixed direction, and then slide the region back to a critical position. While in the method of moving planes, one point is the reflection of the other. For more literatures on the sliding methods for $-\Delta$, $(-\Delta)^{s}$ or $(-\Delta)^{s}_{p}$, please refer to \cite{BCN1,BCN2,BHM,BN1,BN2,BN3,CLiu,CW1,CW2,DSV}.

\begin{rem}\label{rem18}
By using similar ideas and arguments, one can also develop the direct moving planes and sliding methods for the following general fully nonlinear nonlocal operators:
\begin{equation}\label{fno}
  F_{s,m}(u)(x):=c_{N,s} m^{\Ns}P.V.\int_{\mathbb{R}^{N}} \frac{G(u(x)-u(y))}{|x-y|^{\Ns}}K_{\Ns}(m|x-y|)d y+m^{2s}u(x),
\end{equation}
where $G$ is a local Lipschitz continuous function satisfying $G(0)=0$ and $u$ belongs to some appropriate function space. This kind of operators were introduced by Caffarelli and Silvestre in \cite{CS}. When $G(t)=|t|^{p-2}t$ and $m\rightarrow0$, $F_{s,m}\rightarrow F_{s,0}=(-\Delta)^{s}_{p}$. If $G(t)=|t|^{p-2}t$ and $m>0$, we denote $F_{s,m}:=(-\Delta+m^{2})^{s}_{p}$. It is clear that, when $G(t)=t$ and $m>0$, $F_{s,m}$ degenerates into the general pseudo-relativistic Schr\"{o}dinger operators $(-\Delta+m^{2})^{s}$. We leave these open problems to interested readers.
\end{rem}

In what follows, we will use $C$ to denote a general positive constant that may depend on $N$, $m$ and $s$, and whose value may differ from line to line.

\section{Direct method of moving planes for $(-\Delta+m^{2})^{s}$}
In this Section, inspired by \cite{CL2,CLL,CW3}, we will establish various maximum principles and develop the direct method of moving planes for pseudo-relativistic Schr\"{o}dinger operators $(-\Delta+m^{2})^{s}$ (with $s\in(0,1)$ and $m>0$) and apply it to investigate symmetry, monotonicity and uniqueness of solutions to the following fractional order semi-linear equation
$$(-\Delta+m^{2})^s u(x)=f(u(x))$$
in different type of regions, including bounded domains $B_{1}(0)$, coercive epigraph $\Omega$ and the whole-space $\mathbb{R}^{N}$.

\subsection{Maximum principles for anti-symmetric functions and immediate applications}
In this subsection, we will establish various maximum principles for anti-symmetric functions and give some immediate applications. These maximum principles are key ingredients in applying the direct method of moving planes.

Let $T$ be any given hyper-plane in $\mathbb{R}^{N}$ and $\Sigma$ be the half space on one side of the plane $T$ hereafter. Denote the reflection of a point $x$ with respect to $T$ by $\tilde{x}$.

First, we can prove the following strong maximum principle for anti-symmetric functions.
\begin{lem}(Strong maximum principle for anti-symmetric functions)\label{SMP-anti}
Suppose that $w\in\mathcal{L}_{s}(\mathbb{R}^{N})$, $w\left(\tilde{x}\right)=-w(x)$ and $w\geq0$ in $\Sigma$. If there exists $x_{0}\in\Sigma$ such that, $w(x_{0})=0$, $w$ is $C^{1,1}$ near $x_{0}$ and $(-\Delta+m^{2})^{s}w(x_{0})\geq0$, then $w=0$ a.e. in $\mathbb{R}^N$.
\end{lem}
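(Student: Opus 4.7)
The plan is to exploit the hypothesis $w(x_0)=0$ together with the anti-symmetry of $w$ to rewrite $(-\Delta+m^2)^s w(x_0)$ as an integral over $\Sigma$ alone, with a strictly positive kernel; the sign assumption on $(-\Delta+m^2)^s w(x_0)$ will then collide with the sign of the integrand and force $w\equiv 0$.

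First, since $w(x_0)=0$, the pointwise term $m^{2s}w(x_0)$ in \eqref{Definition} vanishes, and $(-\Delta+m^2)^s w(x_0)$ reduces to
\[
-\,c_{N,s}m^{\Ns}\,P.V.\int_{\mathbb{R}^N}\frac{w(y)}{|x_0-y|^{\Ns}}K_{\Ns}(m|x_0-y|)\,dy.
\]
I would split this integral over $\Sigma$ and its reflection $\widetilde{\Sigma}$, change variables $y\mapsto \tilde y$ in the $\widetilde{\Sigma}$-part, and apply $w(\tilde y)=-w(y)$, obtaining
\[
(-\Delta+m^2)^s w(x_0)=-\,c_{N,s}m^{\Ns}\int_{\Sigma}w(y)\left[\frac{K_{\Ns}(m|x_0-y|)}{|x_0-y|^{\Ns}}-\frac{K_{\Ns}(m|x_0-\tilde y|)}{|x_0-\tilde y|^{\Ns}}\right]dy.
\]
The principal value can be discarded because $x_0$ is an interior minimum of $w$ in $\Sigma$ (so $\nabla w(x_0)=0$ by $C^{1,1}$-regularity, giving $|w(y)|\lesssim |y-x_0|^2$ near $x_0$, which is integrable against the singularity $|y-x_0|^{-N-2s}$ for $s\in(0,1)$).

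Next, I would verify the sign of the bracket. For $x_0,y\in\Sigma$ a direct computation (using coordinates in which $T=\{x_N=0\}$) gives $|x_0-\tilde y|^{2}-|x_0-y|^{2}=4\,(x_0)_N\,y_N>0$. Since $K_{\Ns}(r)$ is strictly positive and strictly decreasing in $r>0$ (by the asymptotics \eqref{B-asymp0}, \eqref{B-asymp1} and the stated sign of $K_\nu'$), the map $r\mapsto K_{\Ns}(mr)/r^{\Ns}$ is also strictly decreasing, so the bracketed quantity is strictly positive for every $y\in\Sigma$. Combined with $w\geq 0$ in $\Sigma$, this yields $(-\Delta+m^2)^s w(x_0)\leq 0$. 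The hypothesis $(-\Delta+m^2)^s w(x_0)\geq 0$ then forces equality, so the non-negative integrand must vanish a.e.; since the kernel factor is strictly positive on $\Sigma$, we conclude $w\equiv 0$ a.e.\ in $\Sigma$, and hence in $\mathbb{R}^N$ by anti-symmetry.

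There is no single hard step here; the argument is a clean antisymmetric computation. The one point deserving care is the justification that the bracketed difference of Bessel kernels is strictly positive, i.e.\ the strict monotonicity of $r\mapsto K_{\Ns}(mr)/r^{\Ns}$. This follows at once from $K'_\nu<0$ and the fact that $r^{-\Ns}$ is decreasing, but it is worth recording explicitly since the argument relies on this strict positivity to upgrade the vanishing of the integral to the a.e.\ vanishing of $w$ on $\Sigma$.
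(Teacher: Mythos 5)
Your argument is correct and follows exactly the same route as the paper's: rewrite $(-\Delta+m^{2})^{s}w(x_{0})$ as an integral over $\Sigma$ using the anti-symmetry, observe that the resulting kernel difference is positive because $r\mapsto K_{\Ns}(mr)/r^{\Ns}$ is decreasing, and conclude $w\equiv0$ a.e.\ in $\Sigma$ from the sign clash. The only difference is that you spell out the two minor justifications (removal of the principal value near $x_{0}$ via $\nabla w(x_{0})=0$, and the strict monotonicity of the kernel) that the paper leaves implicit.
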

\begin{proof}
	Since there exists $x_0\in\Sigma$ such that $w(x_0)=\min_{x\in\Sigma}w(x)=0$, it follows from \eqref{Definition} that
	\begin{align*}
	&0\leq\left(-\Delta+m^{2}\right)^{s}w(x_0) \\ \nonumber
	&\quad =c_{N,s} m^{\Ns} P.V. \int_{\mathbb{R}^{N}} \frac{w(x_0)-w(y)}{|x_{0}-y|^{\Ns}} K_{\Ns}(m|x_{0}-y|)dy \\ \nonumber
	&\quad =-c_{N, s} m^{\Ns} P.V. \int_{\mathbb{R}^{N}} \frac{w(y)}{|x_{0}-y|^{\Ns}} K_{\Ns}(m|x_{0}-y|)dy \\ \nonumber
    &\quad =c_{N, s} m^{\Ns} P.V. \int_{\Sigma} \left(\frac{K_{\Ns}(m|x_{0}-\tilde{y}|)}{|x_{0}-\tilde{y}|^{\Ns}}-\frac{K_{\Ns}(m|x_{0}-y|)}{|x_{0}-y|^{\Ns}}\right)w(y)dy\leq0.
	\end{align*}
	Thus we must have $w=0$ a.e. in $\Sigma$ and hence $w=0$ a.e. in $\mathbb{R}^{N}$. This finishes the proof of Lemma \ref{SMP-anti}.
\end{proof}

\subsubsection{Maximum principles in bounded open sets}
\begin{thm}[Maximum principle for anti-symmetric functions]\label{MP Anti}
Let $\Omega$ be a bounded open set in $\Sigma$. Assume that $w\in \mathcal{L}_{s}(\mathbb{R}^{N})\cap C_{\text {loc}}^{1,1}(\Omega)$ and is lower semi-continuous on $\overline{\Omega}$. If
\begin{equation}\label{MP_anti}
\left\{\begin{array}{ll}{(-\Delta+m^{2})^s w(x)+c(x)w(x)\geq 0} & {\text {at points} \,\, x\in\Omega \,\, \text{where} \,\, w(x)<0} \\ {w(x) \geq 0} & {\text { in } \Sigma \setminus \Omega} \\ {w\left(\tilde{x}\right)=-w(x)} & {\text { in } \Sigma,}\end{array}\right.
\end{equation}
where $c(x)\geq-m^{2s}$ for any $x\in\left\{x\in\Omega\,|\,w(x)<0\right\}$. Then $w(x) \geq 0$ in $\Omega$.

Furthermore, assume that
\begin{equation}\label{2MP-anti-2}
  (-\Delta+m^{2})^s{w}(x)\geq0 \quad \text{at points} \,\, x\in\Omega \,\, \text{where} \,\, w(x)=0,
\end{equation}
then either $w>0$ in $\Omega$ or $w=0$ almost everywhere in $\mathbb{R}^{N}$.

These conclusions hold for unbounded open set $\Omega$ if we further assume that
$$
\liminf_{|x| \rightarrow \infty} w(x) \geq 0.
$$
\end{thm}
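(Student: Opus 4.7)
My plan is to argue by contradiction. If the conclusion $w\geq 0$ in $\Omega$ fails, then the lower semi-continuity of $w$ on $\overline{\Omega}$, the boundedness of $\Omega$, and the exterior condition $w\geq 0$ on $\Sigma\setminus\Omega$ together force the (negative) infimum of $w$ over $\Omega$ to be attained at some interior point $x_0\in\Omega$, at which $w(x_0)=\min_{\Sigma}w<0$. The regularity $w\in C^{1,1}_{\mathrm{loc}}(\Omega)$ ensures that $(-\Delta+m^{2})^{s}w(x_0)$ is well defined in the principal-value sense.

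The decisive step is to evaluate $(-\Delta+m^{2})^{s}w(x_0)$ by exploiting the anti-symmetry $w(\tilde y)=-w(y)$. Splitting the integral in \eqref{Definition} over $\Sigma$ and $\Sigma^{c}$ and applying the change of variables $y\mapsto\tilde y$ in the part over $\Sigma^{c}$, I obtain
\begin{equation*}
(-\Delta+m^{2})^{s}w(x_0)-m^{2s}w(x_0)=c_{N,s}m^{\Ns}\,\mathrm{P.V.}\!\int_{\Sigma}\Big\{[w(x_0)-w(y)]\mathcal{K}(|x_0-y|)+[w(x_0)+w(y)]\mathcal{K}(|x_0-\tilde y|)\Big\}dy,
\end{equation*}
where I abbreviate $\mathcal{K}(r):=r^{-\Ns}K_{\Ns}(mr)$.

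The two key facts I would then exploit are that $\mathcal{K}$ is strictly positive and strictly decreasing on $(0,\infty)$ (since $K_{\Ns}>0$, $K'_{\Ns}<0$, and $r^{-\Ns}$ is itself decreasing), and that for $x_0,y\in\Sigma$ reflection across $T$ can only increase the distance to $x_0$, i.e.\ $|x_0-y|\leq|x_0-\tilde y|$. Setting $a:=\mathcal{K}(|x_0-y|)$ and $b:=\mathcal{K}(|x_0-\tilde y|)$, so that $a\geq b>0$, I rewrite the integrand as $w(x_0)(a+b)-w(y)(a-b)$, and combine with the minimizing property $w(y)\geq w(x_0)$ on $\Sigma$ to obtain the pointwise upper bound $2w(x_0)b<0$. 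Integrating and noting $\int_{\Sigma}b\,dy>0$ yields the strict inequality $(-\Delta+m^{2})^{s}w(x_0)<m^{2s}w(x_0)$. This contradicts the hypothesis $(-\Delta+m^{2})^{s}w(x_0)+c(x_0)w(x_0)\geq0$, which combined with $c(x_0)\geq-m^{2s}$ and $w(x_0)<0$ gives $(-\Delta+m^{2})^{s}w(x_0)\geq-c(x_0)w(x_0)\geq m^{2s}w(x_0)$.

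The strong maximum principle assertion then follows immediately from Lemma \ref{SMP-anti}: once $w\geq0$ in $\Omega$, the exterior condition extends this to $w\geq 0$ in all of $\Sigma$, and any interior zero at which \eqref{2MP-anti-2} holds triggers Lemma \ref{SMP-anti} to force $w\equiv 0$ a.e.\ in $\mathbb{R}^{N}$. For unbounded $\Omega$, the hypothesis $\liminf_{|x|\to\infty}w(x)\geq 0$ confines any potential negative infimum to a bounded subset $\overline{\Omega\cap B_{R}}$, reducing to the argument above. The part I expect to require the most care is the algebraic/geometric manipulation of the reflected kernel and the extraction of the \emph{strict} inequality: unlike for $(-\Delta)^{s}$, the inhomogeneous kernel $\mathcal{K}$ affords no scaling invariance, so the whole argument must draw its strength from the strict monotonicity of $\mathcal{K}$ together with the positivity of $\int_{\Sigma}\mathcal{K}(|x_0-\tilde y|)\,dy$.
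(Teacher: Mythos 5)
Your proposal is correct and follows essentially the same route as the paper's proof: attain the negative minimum at an interior point, split the principal-value integral over $\Sigma$ and $\Sigma^{c}$, use anti-symmetry to fold everything onto $\Sigma$, exploit the strict monotonicity of the kernel $r\mapsto r^{-\frac{N}{2}-s}K_{\frac{N}{2}+s}(mr)$ together with $|x_0-y|\le|x_0-\tilde y|$ to bound the integrand by $2w(x_0)\mathcal{K}(|x_0-\tilde y|)<0$, and contradict the hypothesis after absorbing the $m^{2s}$ and $c(x_0)$ terms. The only difference is cosmetic bookkeeping (you move $m^{2s}w(x_0)$ to the other side before comparing, while the paper keeps $(c(\hat x)+m^{2s})w(\hat x)$ together and drops it as nonpositive); the estimates and the appeal to Lemma~\ref{SMP-anti} for the strong version are identical.
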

\begin{proof}
If $w$ is not nonnegative, then the lower semi-continuity of $w$ on $\overline{\Omega}$ indicates that
there exists a $\hat{x}\in \overline{\Omega}$ such that
$$
w\left(\hat{x}\right)=\min _{\overline{\Omega}} w<0.
$$
One can further deduce from \eqref{MP_anti} that $\hat{x}$ is in the interior of $\Omega$. It follows that
\begin{equation}\label{P:MP_inequa}
\begin{aligned}
&(-\De+m^{2})^s w\left(\hat{x}\right)+c(\hat{x})w\left(\hat{x}\right) \\
=&c_{N, s}m^{\frac{N}{2}+s}P.V.\int_{\mathbb{R}^{N}}\frac{w\left(\hat{x}\right)
-w(y)}{\left|\hat{x}-y\right|^{\Ns}}K_{\Ns}\left(m\left|\hat{x}-y\right|\right)dy
+\left(c(\hat{x})+m^{2s}\right)w\left(\hat{x}\right) \\
=&c_{N, s}m^{\frac{N}{2}+s}P.V.\left\{\int_{\Sigma} \frac{w\left(\hat{x}\right)-w(y)}{\left|\hat{x}-y\right|^{\Ns}}K_{\Ns}\left(m\left|\hat{x}-y\right|\right) dy\right. \\
&+\left.\int_{\Sigma}\frac{w\left(\hat{x}\right)+w(y)}{\left|\hat{x}-\tilde{y}\right|^{\Ns}}K_{\Ns}\left(m\left|\hat{x}-\tilde{y}\right|\right)dy\right\}
+\left(c(\hat{x})+m^{2s}\right)w\left(\hat{x}\right) \\
\leq& c_{N,s}m^{\frac{N}{2}+s}
\int_{\Sigma}\left\{\frac{w\left(\hat{x}\right)-w(y)}{\left|\hat{x}-\tilde{y}\right|^{\Ns}}+\frac{w\left(\hat{x}\right)+w(y)}{\left|\hat{x}-\tilde{y}\right|^{\Ns}}\right\}
K_{\Ns}\left(m\left|\hat{x}-\tilde{y}\right|\right)dy \\
=&c_{N,s}m^{\frac{N}{2}+s}\int_{\Sigma}\frac{2w\left(\hat{x}\right)}{\left|\hat{x}-\tilde{y}\right|^{\Ns}}K_{\Ns}\left(m\left|\hat{x}-\tilde{y}\right|\right)dy<0,
\end{aligned}
\end{equation}
which contradicts \eqref{MP_anti}. Hence $w(x)\geq 0$ in $\Omega$.

Now we have proved that $w(x)\geq 0$ in $\Sigma$. If there is some point $\bar{x}\in\Omega$ such that $w\left(\bar{x}\right)=0$, then from \eqref{2MP-anti-2} and Lemma \ref{SMP-anti}, we derive immediately $w=0$ almost everywhere in $\mathbb{R}^{N}$. This completes the proof of Theorem \ref{MP Anti}.
\end{proof}
\begin{rem}\label{rem12}
It is clear from the proof that, in Theorem \ref{MP Anti}, the assumptions ``$w$ is lower semi-continuous on $\overline{\Omega}$" and ``$w\geq0$ in $\Sigma\setminus\Omega$" can be weaken into: ``if $w<0$ somewhere in $\Sigma$, then the negative minimum $\inf_{\Sigma}w(x)$ can be attained in $\Omega$", the same conclusions are still valid. One can also notice that, we only need to assume that $c(x)\geq0$ at points $x\in\Omega$ where $w(x)=inf_{\Sigma}w<0$ in Theorems \ref{MP Anti}.
\end{rem}

\begin{thm}[Narrow region principle]\label{2NRP}
Let $\Omega$ be a bounded open set in $\Sigma$ which can be contained in the region between $T$ and $T_{\Omega}$, where $T_{\Omega}$ is a hyper-plane that is parallel to $T$. Let $d(\Omega):=dist(T,T_{\Omega})$. Suppose that $w\in \mathcal{L}_{s}(\mathbb{R}^{N})\cap C_{loc}^{1,1}(\Omega)$ and is lower semi-continuous on $\overline{\Omega}$, and satisfies
\begin{equation}\label{NRP-anti}
\left\{\begin{array}{ll}{(-\De+m^{2})^s w(x)+c(x)w(x)\geq 0} & {\text {at points} \,\, x\in\Omega \,\, \text{where} \,\, w(x)<0} \\ {w(x) \geq 0} & {\text {in } \Sigma \backslash \Omega} \\ {w\left(\tilde{x}\right)=-w(x)} & {\text {in } \Sigma,}\end{array}\right.
\end{equation}
where $c(x)$ is uniformly bounded from below (w.r.t. $d(\Omega)$) in $\{x\in\Omega\,|\,w(x)<0\}$. There exists a constant $C_{N,s}>0$ such that, if we assume $\Omega$ is narrow in the sense that, $d(\Omega)\leq\frac{r_{0}}{4m}$ and 
\begin{equation}\label{2NRP-3}
  \left(-\inf_{\left\{x\in\Omega\,|\,w(x)<0\right\}}c(x)-m^{2s}\right)d(\Omega)^{2s}<C_{N,s},
\end{equation}
then, $w(x) \geq 0 \text { in } \Omega$. Furthermore, assume that
\begin{equation}\label{2NRP-4}
  (-\Delta+m^{2})^s{w}(x)\geq0 \quad \text{at points} \,\, x\in\Omega \,\, \text{where} \,\, w(x)=0,
\end{equation}
then either $w>0$ in $\Omega$ or $w=0$ almost everywhere in $\mathbb{R}^{N}$.

These conclusions hold for unbounded open set $\Omega$ if we further assume that
$$
\liminf _{|x| \rightarrow \infty} w(x) \geq 0.
$$
\end{thm}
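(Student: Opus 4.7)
The plan is to argue by contradiction, mimicking the scheme used in the proof of Theorem \ref{MP Anti}, but extracting an explicit lower bound on the anti-symmetric kernel integral that scales like $d(\Omega)^{-2s}$, so that the narrowness of $\Omega$ absorbs the (possibly very negative) lower bound on $c(x)$. Suppose for contradiction that $w$ is not nonnegative in $\Omega$. Using the lower semi-continuity on $\overline{\Omega}$ together with $w\geq 0$ on $\Sigma\setminus\Omega$ (or, in the unbounded case, $\liminf_{|x|\to\infty}w\geq 0$) plus anti-symmetry, the negative minimum $w(\hat x)=\min_{\overline{\Sigma}}w<0$ is attained at some interior point $\hat x\in\Omega$.

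Repeating the computation at the heart of \eqref{P:MP_inequa}---splitting the principal value integral over $\Sigma$ and $\tilde\Sigma$, performing $y\mapsto\tilde y$, using $w(\tilde y)=-w(y)$, and replacing the kernel evaluated at $|\hat x-y|$ by the (smaller) one evaluated at $|\hat x-\tilde y|$, which is legitimate because $K_{\Ns}$ is decreasing, $|\hat x-y|\leq|\hat x-\tilde y|$ for $y\in\Sigma$, and $w(\hat x)-w(y)\leq 0$---I obtain
\[
0\leq(-\Delta+m^{2})^{s}w(\hat x)+c(\hat x)w(\hat x)\leq \bigl(2I(\hat x)+c(\hat x)+m^{2s}\bigr)w(\hat x),
\]
where
\[
I(\hat x):=c_{N,s}m^{\Ns}\int_{\Sigma}\frac{K_{\Ns}(m|\hat x-\tilde y|)}{|\hat x-\tilde y|^{\Ns}}\,dy.
\]

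The main technical point---and what makes the narrow region assumption enter quantitatively---is to produce an explicit lower bound $I(\hat x)\geq C(N,s)\,d(\Omega)^{-2s}$ that is independent of $\hat x\in\Omega$. After changing variables $z=\tilde y$, I would restrict the integration to the half-ball $\tilde\Sigma\cap B_{d(\Omega)}(\tilde{\hat x})$, which has Lebesgue measure at least $\tfrac12\omega_N d(\Omega)^N$ no matter where $\hat x$ sits in $\Omega$ (since $\tilde{\hat x}$ is a point of $\overline{\tilde\Sigma}$). For $z$ in this set, the triangle inequality and the assumption $d(\Omega)\leq r_0/(4m)$ give $|\hat x-z|\leq 3d(\Omega)$ and $m|\hat x-z|\leq r_0$, so the small-argument asymptotic \eqref{Bessel_asy0} applies and $K_{\Ns}(m|\hat x-z|)\geq c_0(m|\hat x-z|)^{-\Ns}$. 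Combining these two ingredients yields $I(\hat x)\geq C(N,s)\,d(\Omega)^{-2s}$ after elementary bookkeeping.

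Dividing the displayed inequality by the negative number $w(\hat x)$ flips its sign and produces $2I(\hat x)+c(\hat x)+m^{2s}\leq 0$, equivalently $\bigl(-c(\hat x)-m^{2s}\bigr)d(\Omega)^{2s}\geq 2C(N,s)$. Taking the infimum over $\{x\in\Omega:w(x)<0\}$ on the left and choosing the constant in the statement to equal $2C(N,s)$ contradicts the smallness hypothesis \eqref{2NRP-3}. Hence $w\geq 0$ in $\Omega$. The strong conclusion then follows from Lemma \ref{SMP-anti}: if $w(\bar x)=0$ at some interior $\bar x\in\Omega$, hypothesis \eqref{2NRP-4} supplies exactly the premise needed to conclude $w\equiv 0$ a.e.\ in $\mathbb{R}^N$. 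The unbounded case is handled identically once $\liminf_{|x|\to\infty}w\geq 0$ guarantees that the minimum cannot escape to infinity. I expect the geometry-plus-Bessel-asymptotics estimate for $I(\hat x)$ to be the only step requiring genuinely new work; everything else is bookkeeping on top of the scheme of Theorem \ref{MP Anti}.
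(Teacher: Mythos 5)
Your proposal is correct, and the overall strategy is the paper's: contradiction, interior negative minimum $\hat x\in\Omega$, the anti-symmetry/reflection trick reducing $(-\Delta+m^{2})^{s}w(\hat x)+c(\hat x)w(\hat x)$ to $(2I(\hat x)+c(\hat x)+m^{2s})w(\hat x)$ with $I(\hat x):=c_{N,s}m^{\frac{N}{2}+s}\int_{\Sigma}K_{\frac{N}{2}+s}(m|\hat x-\tilde y|)|\hat x-\tilde y|^{-\frac{N}{2}-s}\,dy$, a lower bound $I(\hat x)\geq C(N,s)\,d(\Omega)^{-2s}$, and then Lemma~\ref{SMP-anti} for the strong-maximum-principle addendum. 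Where you genuinely diverge from the paper is in how you produce that $d(\Omega)^{-2s}$ lower bound. The paper restricts the integral (after unfolding the reflection) to the thin cylinder $\{d(\Omega)<y_{1}-\hat x_{1}<2d(\Omega),\ |y'-\hat x'|<r_{0}/(2m)\}$, passes to cylindrical coordinates, makes the scaling substitution $\rho=\tau/t$, and integrates $t^{-1-2s}$ over $(d(\Omega),2d(\Omega))$. You instead restrict to the half-ball $\tilde\Sigma\cap B_{d(\Omega)}(\tilde{\hat x})$, note $|\hat x-z|\leq 3d(\Omega)\leq 3r_{0}/(4m)$ there by the triangle inequality, invoke \eqref{Bessel_asy0}, and finish with the crude ``(measure)$\times$(pointwise minimum of the integrand)'' bound. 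Your route is a bit more elementary (no explicit integration, no change of variables, and it yields a completely explicit constant $c_{N,s}c_{0}\omega_{N}/(2\cdot3^{N+2s})$). The paper's cylinder decomposition, while more computational, is the same estimate \eqref{2NRP-2} that it reuses essentially verbatim in \eqref{MP-5'} inside the proof of Theorem~\ref{MP_anti_ubdd}, which is presumably why it is stated in that form. Either construction delivers the scaling and, with $C_{N,s}$ set equal to twice your constant, the contradiction with \eqref{2NRP-3}. Your handling of the lower-semicontinuity/unbounded-$\Omega$ bookkeeping and of the \eqref{2NRP-4} dichotomy via Lemma~\ref{SMP-anti} matches the paper.
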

\begin{proof}
Without loss of generalities, we may assume that
\[T=\{x\in\mathbb{R}^{N}\,|\,x_{1}=0\} \quad \text{and} \quad \Sigma=\{x\in\mathbb{R}^{N}\,|\,x_{1}<0\},\]
and hence $\Omega\subseteq\{x\in\mathbb{R}^{N}\,|-d(\Omega)<x_{1}<0\}$.

If $w$ is not nonnegative in $\Omega$, then the lower semi-continuity of $w$ on $\overline{\Omega}$ indicates that, there exists a $\bar{x}\in\overline{\Omega}$ such that
$$
w\left(\bar{x}\right)=\min_{\overline{\Omega}} w<0.
$$
One can further deduce from \eqref{NRP-anti} that $\bar{x}$ is in the interior of $\Omega$. It follows that
\begin{equation}\label{2NRP-1}
\begin{aligned}
&(-\De+m^{2})^s w\left(\bar{x}\right) \\
=&c_{N,s}m^{\frac{N}{2}+s}P.V.\int_{\mathbb{R}^{N}}\frac{w\left(\bar{x}\right)
-w(y)}{\left|\bar{x}-y\right|^{\Ns}}K_{\Ns}\left(m\left|\bar{x}-y\right|\right)dy+m^{2s}w\left(\bar{x}\right) \\
=&c_{N,s}m^{\frac{N}{2}+s}P.V.\left\{\int_{\Sigma} \frac{w\left(\bar{x}\right)-w(y)}{\left|\bar{x}-y\right|^{\Ns}}K_{\Ns}\left(m\left|\bar{x}-y\right|\right) dy\right. \\
&+\left.\int_{\Sigma}\frac{w\left(\bar{x}\right)+w(y)}{\left|\bar{x}-\tilde{y}\right|^{\Ns}}K_{\Ns}\left(m\left|\bar{x}-\tilde{y}\right|\right) dy\right\}+m^{2s}w\left(\bar{x}\right) \\
\leq& c_{N,s}m^{\frac{N}{2}+s}
\int_{\Sigma}\left\{\frac{w\left(\bar{x}\right)-w(y)}{\left|\bar{x}-\tilde{y}\right|^{\Ns}}
+\frac{w\left(\bar{x}\right)+w(y)}{\left|\bar{x}-\tilde{y}\right|^{\Ns}}\right\}K_{\Ns}\left(m\left|\bar{x}-\tilde{y}\right|\right)dy
+m^{2s}w\left(\bar{x}\right) \\
=&c_{N,s}m^{\frac{N}{2}+s}\int_{\Sigma}\frac{2w\left(\bar{x}\right)}{\left|\bar{x}-\tilde{y}\right|^{\Ns}}K_{\Ns}\left(m\left|\bar{x}-\tilde{y}\right|\right)dy
+m^{2s}w\left(\bar{x}\right).
\end{aligned}
\end{equation}
Let
$$D:=\left\{y=(y_1,y')\in\mathbb{R}^{N} \mid d(\Omega)<y_{1}-(\bar{x})_{1}<2d(\Omega),\left|y^{\prime}-\left(\bar{x}\right)^{\prime}\right|<\frac{r_{0}}{2m}\right\},$$
then $m|y-\bar{x}|<r_0$ for all $y\in D$. Denote $t:=y_{1}-(\bar{x})_{1}$, $\tau:=\left|y^{\prime}-\left(\bar{x}\right)^{\prime}\right|$. Using \eqref{Bessel_asy0}, we have
\begin{equation}\label{2NRP-2}
\begin{aligned}
&m^{\frac{N}{2}+s}\int_{\Sigma}\frac{1}{\left|\bar{x}-\tilde{y}\right|^{\Ns}}K_{\Ns}\left(m\left|\bar{x}-\tilde{y}\right|\right)dy \\
\geq &m^{\frac{N}{2}+s}\int_{D}\frac{1}{\left|\bar{x}-y\right|^{\Ns}}K_{\Ns}\left(m\left|\bar{x}-y\right|\right)dy
\geq c_{0}\int_{D}\frac{1}{\left|\bar{x}-y\right|^{N+2s}}dy \\
=&c_{0}\int_{d(\Omega)}^{2d(\Omega)}\int_{0}^{\frac{r_0}{2m}}\frac{\sigma_{N-1}\tau^{N-2}d\tau}{\left(t^{2}+\tau^{2}\right)^{\Ns}}dt
=c_{0}\int_{d(\Omega)}^{2d(\Omega)}\int_{0}^{\frac{r_0}{2mt}}\frac{\sigma_{N-1}(t\rho)^{N-2}t d\rho}{t^{N+2s}\left(1+\rho^{2}\right)^{\Ns}}dt \\
=&c_{0}\int_{d(\Omega)}^{2d(\Omega)}\frac{1}{t^{1+2s}}\int_{0}^{\frac{r_0}{2mt}}\frac{\sigma_{N-1}\rho^{N-2} d\rho}{\left(1+\rho^{2}\right)^{\Ns}}dt
\geq c_{0}\int_{d(\Omega)}^{2d(\Omega)}\frac{1}{t^{1+2s}}\int_{0}^{1}\frac{\sigma_{N-1}\rho^{N-2} d\rho}{\left(1+\rho^{2}\right)^{\Ns}}dt \\
\geq&C_{N,s}\int_{d(\Omega)}^{2d(\Omega)}\frac{1}{t^{1+2s}}dt=\frac{C_{N,s}}{d(\Omega)^{2s}},
\end{aligned}
\end{equation}
where we have used the substitution $\rho:=\tau/t$ and $\sigma_{N-1}$ denotes the area of the unit sphere in $\mathbb{R}^{N-1}$. Since $c(x)$ is uniformly bounded from below (w.r.t. $d(\Omega)$) in $\{x\in\Omega\,|\,w(x)<0\}$, then, from \eqref{2NRP-3}, \eqref{2NRP-1} and \eqref{2NRP-2}, we get
$$
(-\De+m^{2})^s w\left(\bar{x}\right)+c\left(\bar{x}\right)w\left(\bar{x}\right)
\leq\left[\frac{C_{N,s}}{d(\Omega)^{2s}}+\inf_{\{x\in\Omega\,|\,w(x)<0\}}c(x)+m^{2s}\right]w\left(\bar{x}\right)<0,
$$
which contradicts \eqref{NRP-anti}.

Now we have proved that $w(x)\geq 0$ in $\Sigma$. If there is some point $\bar{x}\in\Omega$ such that $w\left(\bar{x}\right)=0$, then from \eqref{2NRP-4} and Lemma \ref{SMP-anti}, we derive immediately $w=0$ almost everywhere in $\mathbb{R}^{N}$. This finishes the proof of Theorem \ref{2NRP}.
\end{proof}
\begin{rem}\label{rem13}
It is clear from the proof that, in Theorem \ref{2NRP}, the assumptions ``$w$ is lower semi-continuous on $\overline{\Omega}$" and ``$w\geq0$ in $\Sigma\setminus\Omega$" can be weaken into: ``if $w<0$ somewhere in $\Sigma$, then the negative minimum $\inf_{\Sigma}w(x)$ can be attained in $\Omega$", the same conclusions are still valid. One can also notice that, in Theorem \ref{2NRP}, we only need to assume that $c(x)$ is uniformly bounded from below at the negative minimum points of $w$ and $\inf_{\left\{x\in\Omega\,|\,w(x)<0\right\}}c(x)$ can be replaced by the infimum of $c(x)$ over the set of negative minimum points of $w$ in \eqref{2NRP-3}.
\end{rem}

\subsubsection{Maximum principles in unbounded open sets and immediate applications}
\begin{thm}[Decay at infinity (I)]\label{P:decay}
Suppose $0\notin\Sigma$. Let $\Omega$ be an unbounded open set in $\Sigma$. Assume $w\in \mathcal{L}_{s}(\mathbb{R}^{N})\cap C_{loc}^{1,1}(\Omega)$ is a solution of
\begin{equation}\label{P:decay_eq}
\left\{\begin{array}{ll}{(-\De+m^{2})^s w(x)+c(x)w(x)\geq 0} & {\text {at points} \,\, x\in\Omega \,\, \text{where} \,\, w(x)<0} \\ {w(x) \geq 0} & {\text { in } \Sigma \backslash \Omega} \\ {w\left(\tilde{x}\right)=-w(x)} & {\text { in } \Sigma}\end{array}\right.
\end{equation}
with
\begin{equation}\label{decay_con}
\liminf_{\substack{x\in\Omega,\,w(x)<0 \\ |x| \rightarrow+\infty}}|x|^{s+\frac{1}{2}-\frac{N}{2}}e^{3m|x|}\left(c(x)+m^{2s}\right)\geq 0,
\end{equation}
then there exists a constant $R_{0}>0$ (depending only on $c(x)$, $m$, $N$ and $s$, but independent of $w$ and $\Sigma$) such that, if $\hat{x}\in\Omega$ satisfying
$$
w\left(\hat{x}\right)=\min_{\overline{\Omega}} w(x)<0,
$$
then $\left|\hat{x}\right|\leq R_{0}$.
\end{thm}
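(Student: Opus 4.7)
The plan is to exploit the anti-symmetry of $w$ and the minimality at $\hat{x}$ to derive a quantitative lower bound on $-(c(\hat{x})+m^{2s})$ whose decay in $|\hat{x}|$ is too slow to be compatible with \eqref{decay_con} for large $|\hat{x}|$.

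The first step is to reproduce the reflection computation already carried out in the proofs of Theorem~\ref{MP Anti} and Theorem~\ref{2NRP}. Splitting the defining integral of $(-\Delta+m^{2})^{s}w(\hat{x})$ over $\Sigma$ and its reflection and using $w(\tilde y)=-w(y)$, then comparing kernels via $|\hat{x}-y|\le|\hat{x}-\tilde{y}|$ for $y\in\Sigma$ together with the minimality $w(\hat{x})-w(y)\le 0$ on $\Sigma$ (which holds since $\hat{x}$ minimizes $w$ on $\overline{\Omega}$ and $w\ge 0$ on $\Sigma\setminus\Omega$), I obtain
\begin{equation*}
(-\Delta+m^{2})^{s}w(\hat{x})\leq w(\hat{x})\left[2c_{N,s}m^{\frac{N}{2}+s}\int_{\Sigma}\frac{K_{\frac{N}{2}+s}(m|\hat{x}-\tilde{y}|)}{|\hat{x}-\tilde{y}|^{\frac{N}{2}+s}}\,dy+m^{2s}\right].
\end{equation*}
Substituting into the hypothesis $(-\Delta+m^{2})^{s}w(\hat{x})+c(\hat{x})w(\hat{x})\geq 0$ and dividing by $w(\hat{x})<0$ yields
\begin{equation*}
-\bigl(c(\hat{x})+m^{2s}\bigr)\geq 2c_{N,s}m^{\frac{N}{2}+s}\int_{\Sigma}\frac{K_{\frac{N}{2}+s}(m|\hat{x}-\tilde{y}|)}{|\hat{x}-\tilde{y}|^{\frac{N}{2}+s}}\,dy.
\end{equation*}

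The second step is to bound this integral from below by restricting to the test region $D:=B_{|\hat{x}|}(\hat{x})\cap\Sigma$. The assumption $0\notin\Sigma$ enters here: since $\hat{x}\in\Sigma$ and $0\notin\Sigma$, the segment from $\hat{x}$ to $0$ must cross the hyperplane $T$, forcing $\mathrm{dist}(\hat{x},T)\leq|\hat{x}|$. Hence $T$ meets $B_{|\hat{x}|}(\hat{x})$, and the cap on the $\Sigma$-side has volume $|D|\geq c_{N}|\hat{x}|^{N}$ for a positive dimensional constant $c_{N}$ (since the center $\hat x$ is on the $\Sigma$-side, the half closer to $\hat x$ is entirely in $\Sigma$). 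Moreover, the triangle inequality and the fact that reflection is an isometry give
\begin{equation*}
|\hat{x}-\tilde{y}|\leq|\hat{x}-\tilde{\hat{x}}|+|\tilde{\hat{x}}-\tilde{y}|=2\,\mathrm{dist}(\hat{x},T)+|\hat{x}-y|\leq 3|\hat{x}|\quad\text{for all } y\in D.
\end{equation*}
Monotonicity of $r\mapsto K_{\frac{N}{2}+s}(mr)/r^{\frac{N}{2}+s}$ together with the asymptotic \eqref{Bessel_asy1} (valid once $3m|\hat{x}|\geq R_{\infty}$) then produces
\begin{equation*}
\int_{\Sigma}\frac{K_{\frac{N}{2}+s}(m|\hat{x}-\tilde{y}|)}{|\hat{x}-\tilde{y}|^{\frac{N}{2}+s}}\,dy\geq C_{N,m,s}\,|\hat{x}|^{\frac{N}{2}-s-\frac{1}{2}}e^{-3m|\hat{x}|}.
\end{equation*}

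Combining the two displayed inequalities gives
\begin{equation*}
|\hat{x}|^{s+\frac{1}{2}-\frac{N}{2}}e^{3m|\hat{x}|}\bigl(c(\hat{x})+m^{2s}\bigr)\leq -C<0,
\end{equation*}
which directly contradicts \eqref{decay_con} once $|\hat{x}|$ exceeds some threshold $R_{0}=R_{0}(c,m,N,s)$, completing the proof. The principal difficulty is selecting the test region $D$: a naive fixed-radius choice such as $B_{1}(\hat{x})\cap\Sigma$ yields only $\int\gtrsim|\hat{x}|^{-(\frac{N}{2}+s+\frac{1}{2})}e^{-3m|\hat{x}|}$, which is too weak to force $|\hat{x}|$ bounded because the resulting bound degrades by the factor $|\hat{x}|^{-N}$ compared with \eqref{decay_con}. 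Scaling the radius of $D$ with $|\hat{x}|$ injects exactly the missing $|\hat{x}|^{N}$ coming from $|D|$, matching the exponent $s+\frac{1}{2}-\frac{N}{2}$ in \eqref{decay_con}; and the assumption $0\notin\Sigma$ is precisely what guarantees the uniform volume bound $|D|\ge c_{N}|\hat{x}|^{N}$ independently of $\Sigma$.
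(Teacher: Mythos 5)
Your proposal is correct and follows essentially the same route as the paper: reflect the integral over $\Sigma$, use minimality and the kernel comparison $|\hat{x}-y|\le|\hat{x}-\tilde y|$ to arrive at the bound $(-\Delta+m^2)^s w(\hat x)\le w(\hat x)\bigl[2c_{N,s}m^{\frac N2+s}\int_\Sigma K_{\frac N2+s}(m|\hat x-\tilde y|)|\hat x-\tilde y|^{-\frac N2-s}dy+m^{2s}\bigr]$, then bound the integral below by a region of size $\sim|\hat x|$ on which $|\hat x-\tilde y|\le 3|\hat x|$, invoke the Bessel asymptotic \eqref{Bessel_asy1}, and contradict \eqref{decay_con}. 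The only (cosmetic) difference is the choice of test region: you use the half of $B_{|\hat x|}(\hat x)$ inside $\Sigma$, whereas the paper translates a full ball $B_{|\hat x|}(\bar x)$ with $\bar x=(2|\hat x|+(\hat x)_1,(\hat x)')$ into the reflected half-space; both use $0\notin\Sigma$ for the same purpose (to ensure $\mathrm{dist}(\hat x,T)\le|\hat x|$, resp.\ $\lambda\le0$) and yield the same power $|\hat x|^{\frac N2-s-\frac12}e^{-3m|\hat x|}$.
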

\begin{proof}
Without loss of generalities, we may assume that, for some $\lambda\leq0$,
\[T=\{x\in\mathbb{R}^{N}\,|\,x_{1}=\lambda\} \quad \text{and} \quad \Sigma=\{x\in\mathbb{R}^{N}\,|\,x_{1}<\lambda\}.\]

Since $w\in \mathcal{L}_{s}(\mathbb{R}^{N})\cap C_{loc}^{1,1}(\Omega)$ and $\hat{x}\in\Omega$ satisfying $w\left(\hat{x}\right)=\min _{\overline{\Omega}} w(x)<0$, through similar calculations as \eqref{2NRP-1}, we get
\begin{equation}\label{Decay-1}
(-\De+m^{2})^s w\left(\hat{x}\right)
\leq c_{N,s}m^{\frac{N}{2}+s}\int_{\Sigma}\frac{2w\left(\hat{x}\right)}{\left|\hat{x}-\tilde{y}\right|^{\Ns}}K_{\Ns}\left(m\left|\hat{x}-\tilde{y}\right|\right) dy+m^{2s}w\left(\hat{x}\right).
\end{equation}
Note that $\lambda\leq0$ and $\hat{x}\in\Omega$, it follows that $B_{\left|\hat{x}\right|}\left(\bar{x}\right) \subset \left\{x \in \mathbb{R}^{N} | x_{1}>\lambda\right\}$, where $\bar{x}:=\left(2\left|\hat{x}\right|+(\hat{x})_{1},\left(\hat{x}\right)^{\prime}\right)$. Thus we derive that, if $|\hat{x}|\geq\frac{R_{\infty}}{3m}$,
\begin{equation}\label{Decay-2}
\begin{aligned}
& m^{\frac{N}{2}+s}\int_{\Sigma}\frac{K_{\Ns}\left(m\left|\hat{x}-\tilde{y}\right|\right)}{\left|\hat{x}-\tilde{y}\right|^{\Ns}}dy \\
\geq &m^{\frac{N}{2}+s}\int_{B_{|\hat{x}|}\left(\bar{x}\right)}\frac{K_{\Ns}\left(m\left|\hat{x}-y\right|\right)}
{\left|\hat{x}-y\right|^{\Ns}} dy\geq m^{\frac{N}{2}+s}\int_{B_{\left|\hat{x}\right|}\left(\bar{x}\right)}\frac{K_{\Ns}\left(3m\left|\hat{x}\right|\right)}
{3^{\Ns}\left|\hat{x}\right|^{\Ns}} dy \\
\geq&\frac{c_{\infty}m^{\frac{N-1}{2}+s}\omega_N}{3^{\frac{N+1}{2}+s}\left|\hat{x}\right|^{s+\frac12-\frac{N}{2}}e^{3m\left|\hat{x}\right|}},
\end{aligned}
\end{equation}
where $\omega_{N}:=|B_{1}(0)|$ denotes the volume of unit ball in $\mathbb{R}^{N}$. Then we can deduce from \eqref{P:decay_eq}, \eqref{Decay-1} and \eqref{Decay-2} that
\begin{align}\label{Decay-3}
0&\leq(-\De+m^{2})^s w(\hat{x})+c(\hat{x})w(\hat{x}) \\
\nonumber &\leq\left[\frac{2c_{N,s}c_{\infty}\omega_{N}m^{\frac{N-1}{2}+s}}{3^{\frac{N+1}{2}+s}\left|\hat{x}\right|^{s+\frac12-\frac{N}{2}}e^{3m\left|\hat{x}\right|}}
+c(\hat{x})+m^{2s}\right]w(\hat{x}).
\end{align}
It follows from $w(\hat{x})<0$ and \eqref{Decay-3} that
\begin{equation}\label{Decay-4}
\left|\hat{x}\right|^{s+\frac{1}{2}-\frac{N}{2}}e^{3m\left|\hat{x}\right|}\left[c\left(\hat{x}\right)+m^{2s}\right]
\leq-\frac{2c_{N,s}c_{\infty}\omega_{N}m^{\frac{N-1}{2}+s}}{3^{\frac{N+1}{2}+s}}.
\end{equation}
From \eqref{decay_con}, we infer that there exists a $R_{1}$ sufficiently large such that, for any $x\in\Omega$ satisfying $|x|>R_{1}$ and $w(x)<0$,
\begin{equation}\label{Decay-5}
  |x|^{s+\frac{1}{2}-\frac{N}{2}}e^{3m|x|}\left(c(x)+m^{2s}\right)>-\frac{2c_{N,s}c_{\infty}\omega_{N}m^{\frac{N-1}{2}+s}}{3^{\frac{N+1}{2}+s}}.
\end{equation}
Combining \eqref{Decay-4} and \eqref{Decay-5}, we arrive at $|\hat{x}|\leq R_{1}$. Therefore, take $R_{0}:=\max\{\frac{R_{\infty}}{3m},R_{1}\}$, we must have $|\hat{x}|\leq R_{0}$. This completes the proof of Theorem \ref{P:decay}.
\end{proof}
\begin{rem}\label{rem14}
It is clear from the proofs of Theorems \ref{MP Anti}, \ref{2NRP} and \ref{P:decay} that, the assumption ``$(-\De+m^{2})^s w(x)+c(x)w(x)\geq 0$ at points $x\in\Omega$ where $w(x)<0$" can be weaken into: ``$(-\De+m^{2})^s w(x)+c(x)w(x)\geq 0$ at points $x\in\Omega$ where $w(x)=inf_{\Sigma}w<0$", the same conclusions in Theorems \ref{MP Anti}, \ref{2NRP} and \ref{P:decay} are still valid.
\end{rem}

\begin{thm}[Maximum principle for anti-symmetric functions in unbounded domains]\label{MP_anti_ubdd}
Assume that $w \in \mathcal{L}_{s}(\mathbb{R}^{N})\cap C_{\text {loc}}^{1,1}(\Sigma)$ is bounded from above and $w\left(\tilde{x}\right)=-w(x)$ in $\Sigma$, where $\tilde{x}$ is the reflection of $x$ with respect to $T$. Suppose that, at any points $x \in\Sigma$ such that $w(x)>0$, $w$ satisfies
\begin{equation}\label{MP-1}
\left(-\Delta+m^{2}\right)^{s} w(x)+c(x)w(x)\leq 0,
\end{equation}
where $c(x)$ satisfies
\begin{equation}\label{MP-assumption}
  \inf_{\{x\in\Sigma \mid w(x)>0\}}\,c(x)>-m^{2s}.
\end{equation}
Then
\begin{equation}\label{MP-2}
w(x) \leq 0, \quad \forall x \in\Sigma.
\end{equation}
Furthermore, assume that
\begin{equation}\label{MP-condition}
  (-\Delta+m^{2})^s{w}(x)\leq0 \quad \text{at points} \,\, x\in\Sigma \,\, \text{where} \,\, w(x)=0,
\end{equation}
then either $w<0$ in $\Sigma$ or $w=0$ in $\mathbb{R}^{N}$.
\end{thm}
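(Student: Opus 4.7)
The strategy is proof by contradiction combined with the antisymmetric decomposition of $(-\Delta+m^2)^s$ already exploited in the proofs of Theorems \ref{MP Anti}, \ref{2NRP} and \ref{P:decay}. Suppose toward contradiction that $M:=\sup_\Sigma w>0$. Splitting the defining integral of $(-\Delta+m^2)^s w(x)$ over $\Sigma$ and $\tilde\Sigma$, changing variables $y\mapsto\tilde y$ on $\tilde\Sigma$, and using $w(\tilde y)=-w(y)$, one rewrites, for every $x\in\Sigma$,
\[
(-\Delta+m^2)^s w(x)\;=\;I_1(x)+\bigl(m^{2s}+I_2(x)\bigr)w(x),
\]
where
\[
I_2(x):=2c_{N,s}m^{\Ns}\!\int_\Sigma\frac{K_{\Ns}(m|x-\tilde y|)}{|x-\tilde y|^{\Ns}}\,dy>0
\]
and
\[
I_1(x):=c_{N,s}m^{\Ns}\,\mathrm{P.V.}\!\int_\Sigma\bigl(w(x)-w(y)\bigr)\!\left[\frac{K_{\Ns}(m|x-y|)}{|x-y|^{\Ns}}-\frac{K_{\Ns}(m|x-\tilde y|)}{|x-\tilde y|^{\Ns}}\right]dy.
\]
The key observation is that the bracket in $I_1$ is \emph{nonnegative} for $y\in\Sigma$, since $|x-y|\leq|x-\tilde y|$ and $K_\nu$ is decreasing. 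Consequently, at any genuine interior maximizer $x^\ast\in\Sigma$ with $w(x^\ast)=M$, one has $w(x^\ast)-w(y)\geq 0$ for all $y\in\Sigma$, so $I_1(x^\ast)\geq 0$; plugging into \eqref{MP-1} with $\delta:=\inf_{\{w>0\}}(c+m^{2s})>0$ would yield $(\delta+I_2(x^\ast))M\leq 0$, a contradiction.

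The substantive step is to pass this reasoning to the limit along a maximizing sequence $\{x^k\}\subset\Sigma$ with $w(x^k)\to M$. Inserting $x^k$ into \eqref{MP-1} produces
\[
\bigl(\delta+I_2(x^k)\bigr)\,w(x^k)\;\leq\;-I_1(x^k),
\]
so the proof will be complete once one establishes $\liminf_{k\to\infty} I_1(x^k)\geq 0$. Since $w(y)\leq M$ on $\Sigma$ and the bracket is nonnegative, the integrand of $I_1(x^k)$ is formally bounded below by $(w(x^k)-M)$ times a nonnegative kernel, suggesting $I_1(x^k)\to 0^-$ and, together with $w(x^k)\to M$, giving $\delta M\leq 0$.

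The hard part will be making ``$I_1(x^k)\geq o(1)$'' rigorous, because the nonnegative bracket carries a non-integrable singularity $\sim|y-x^k|^{-N-2s}$ at $y=x^k$ inherited from $K_{\Ns}(r)/r^{\Ns}$, so the naive bound $(w(x^k)-M)\cdot$(bracket) cannot be integrated directly. My plan is a two-region split $\Sigma=(\Sigma\cap B_r(x^k))\cup(\Sigma\setminus B_r(x^k))$. On the outer piece, the exponential decay of $K_{\Ns}$ at infinity makes the bracket integrable, and $(M-w(x^k))\to 0$ renders this contribution $o(1)$. On the inner piece, the $C^{1,1}_{\mathrm{loc}}$ regularity of $w$ at $x^k$ gives $w(x^k)-w(y)=O(|y-x^k|^{2})$ after cancelling the linear Taylor term by oddness across $y\mapsto 2x^k-y$ (clean when $\mathrm{dist}(x^k,T)\geq r$, and controllable near $T$ by combining the $C^{1,1}$ norm with the antisymmetry), which tames the PV and yields an $o(1)$ bound as $r\to 0$. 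To handle the unbounded-sequence case, I would invoke Ekeland's $\varepsilon$-variational principle to refine $x^k$ to an approximate critical point $x^{k,\varepsilon}$ with $|\nabla w(x^{k,\varepsilon})|\leq\varepsilon$, uniformly eliminating the linear Taylor contribution; crucially, the exponential decay of $K_{\Ns}$ forces $I_2(x^k)$ to depend only on $\mathrm{dist}(x^k,T)$, not on $|x^k|$, so the coercive lower bound $\delta+I_2(x^k)\geq\delta$ persists along the sequence and closes the contradiction without any assumption at infinity.

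Finally, for the strong conclusion under \eqref{MP-condition}: once $w\leq 0$ on $\Sigma$ is known, suppose $w(\bar x)=0$ at some $\bar x\in\Sigma$. Then $-w\geq 0$ is antisymmetric with $(-w)(\bar x)=0$ and $(-\Delta+m^2)^s(-w)(\bar x)\geq 0$ by \eqref{MP-condition}, so Lemma \ref{SMP-anti} applies verbatim and forces $-w\equiv 0$ a.e.\ in $\mathbb{R}^N$, i.e.\ $w\equiv 0$ in $\mathbb{R}^N$.
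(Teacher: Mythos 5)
Your opening antisymmetric decomposition into $I_1$ and $I_2$ is exactly the one the paper uses in the chain of identities \eqref{MP-5}, and your treatment of the final clause via Lemma~\ref{SMP-anti} is correct. The difficulty — and where your plan and the paper's proof genuinely diverge — is in passing from a maximizing sequence to a contradiction.

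Your plan is to show $\liminf_k I_1(x^k)\geq 0$ directly, by a two-region split and a Taylor expansion of $w$ about $x^k$, using Ekeland's principle to suppress the linear term. This has a genuine gap: the inner-region estimate relies on $w(x^k)-w(y)=O(|y-x^k|^2)$ with a constant controlled uniformly in $k$, i.e.\ a \emph{uniform} $C^{1,1}$ bound near the sequence $x^k$. But the hypothesis is only $w\in C^{1,1}_{loc}(\Sigma)$, which gives no uniform modulus as $x^k\to\infty$ or as $\mathrm{dist}(x^k,T)\to 0$ — precisely the two regimes you must handle. Ekeland's principle, even granting the technical points about completeness of $\Sigma$ and the precise form of the conclusion, only yields smallness of the \emph{linear} Taylor coefficient; it does nothing for the quadratic remainder constant, which is exactly the quantity that is uncontrolled. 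So the assertion ``the $C^{1,1}_{loc}$ regularity of $w$ at $x^k$ gives $w(x^k)-w(y)=O(|y-x^k|^2)$'' is true pointwise for each fixed $k$, but not with a $k$-independent constant, and the contradiction does not close.

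The paper circumvents exactly this by a different device: instead of estimating $I_1$ at $x^k$, it perturbs $w$ by the antisymmetric bump $\varepsilon_k(\tilde\psi_k-\psi_k)$ with $\psi_k(x)=\psi((x-\widetilde{x^k})/d_k)$, so that the perturbed function $w_k$ attains a \emph{genuine} supremum at some $\bar{x}^k$. At a genuine maximizer the whole ``$I_1$-type'' integral for $w_k$ is manifestly nonnegative term by term (this is the lower bound \eqref{MP-5}), with no appeal to local regularity of $w$ at all. The only price is the perturbation's contribution $\varepsilon_k(-\Delta+m^2)^s[\tilde\psi_k-\psi_k]$, and this is controlled by Lemma~\ref{lem-MP} with a constant depending only on the \emph{fixed} bump $\psi$ and the scale $d_k$ — the regularity burden is transferred from $w$ to $\psi$. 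The two-case split on $d_k$ (bounded below vs.\ going to $0$) in the paper then balances the $C\varepsilon_k/d_k^{2s}$ perturbation cost against the $C_{N,s}/d_k^{2s}$ coercivity boost from $I_2$; your plan has no analogue of this balancing because your error terms do not carry the same $d_k$-scaling structure. In short: the idea of producing (approximate) maximizers is right, but to make it rigorous you should realize it by a compactly supported, scaled perturbation of fixed shape, not by Taylor expansion of $w$ itself.
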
		
\begin{rem}\label{rem-MP}
One can observe that, we allow the function $c(x)$ in Theorem \ref{MP Anti}, \ref{P:decay} and \ref{MP_anti_ubdd} to be negative, which is different from corresponding assumptions on $c(x)$ in those maximum principles for fractional Laplacians $(-\Delta)^{s}$ and fractional $p$-Laplacians $(-\Delta)^{s}_{p}$ ($0<s<1$) (see, e.g. Theorem 1 and 3 in \cite{CLL}, Theorem 2 in \cite{CW3}).
\end{rem}
\begin{proof}
Suppose that \eqref{MP-2} is false, since $w$ is bounded from above, we have $M:=\sup_{\Sigma} w(x)>0$. Hence, there exists sequences $x^k\in\Sigma$ and $0<\beta_k<1$ with $\beta_k\rightarrow 1$ as $k\rightarrow \infty$ such that
	\begin{equation}\label{MP1-4}
	w(x^k)\geq \beta_k M.
	\end{equation}
	We may assume that $$T=\{x\in\mathbb{R}^N| x_1=0\}, \quad \Sigma=\{x\in\mathbb{R}^N| x_1<0\}.$$
	Then $\tilde{x}=(-x_1,x_2,\cdots,x_N)$. We denote $d_k:=\frac{1}{2}dist(x^k,T)$.
	Let
	\begin{equation*}
		\psi(x)=\begin{cases}e^{\frac{|x|^{2}}{|x|^2-1}}, \quad\,|x|<1\\ 0, \qquad\,\, \quad|x|\geq 1.\end{cases}
	\end{equation*}
	It is well known that $\psi\in C_0^\infty(\mathbb{R}^N)$, thus $|\left(-\Delta+m^{2}\right)^{s}\psi(x)|\leq C$ for all $x \in \mathbb{R}^N$. Moreover, $\left(-\Delta+m^{2}\right)^{s}\psi(x)\sim|x|^{-\frac{N+1}{2}-s}e^{-|x|}$ as $|x|\rightarrow +\infty$.
	
	Set $$\psi_k(x):=\psi\left(\frac{x-\widetilde{(x^k)}}{d_k}\right) \quad \text{and} \quad \tilde{\psi_k}(x)=\psi_k(\tilde{x})=\psi\left(\frac{x-x^k}{d_k}\right).$$
	Then $\tilde{\psi_k}-\psi_k$ is anti-symmetric with respect to $T$. Now pick $\varepsilon_k=(1-\beta_k)M$, then we have
	$$w(x^k)+\varepsilon_k[\tilde{\psi_k}-\psi_k](x^k)\geq M.$$
	We denote $$w_k(x):=w(x)+\varepsilon_k[\tilde{\psi_k}-\psi_k](x).$$ Then $w_k$ is also anti-symmetric with respect to $T$.
	
	Since for any $x\in\Sigma\setminus B_{d_k}(x^k)$, $w(x)\leq M$ and $\tilde{\psi_k}(x)=\psi_k(x)=0$, we have
    $$w_k(x^k)\geq w_k(x), \quad \forall \,\,x\in\Sigma\setminus B_{d_k}(x^k).$$
	Hence the supremum of $w_k(x)$ in $\Sigma$ is achieved in $B_{d_k}(x^k)$. Consequently, there exists a point $\overline{x}^k\in B_{d_k}(x^k)$ such that
	\begin{equation}\label{MP-3}
	w_k(\overline{x}^k)=\sup_{x\in\Sigma} w_k(x)\geq M.
	\end{equation}
	By the choice of $\varepsilon_k$, it is easy to verify that $w(\bar{x}^k)\geq \beta_k M>0$.
	
	Next, we will evaluate the upper bound and the lower bound of $\left(-\Delta+m^{2}\right)^{s}w_k(\bar{x}^k)$. To this end, we need the following Lemma.
	\begin{lem}\label{lem-MP}
		For any function $\phi\in C^{1,1}(\mathbb{R}^{N})$ with $\|\phi\|_{C^{1,1}(\mathbb{R}^{N})}<+\infty$ and any $r>0$, set $\phi_r(x):=\phi(\frac{x}{r})$. Then,
		\begin{equation*}
		\left|\left[\left(-\Delta+m^{2}\right)^{s}-m^{2s}\right]\phi_r(x)\right|\leq \frac{C}{r^{2s}}, \qquad \forall\,\, x\in\mathbb{R}^{N},
		\end{equation*}	
		where $C>0$ is a constant depending only on $N$, $s$ and $\|\phi\|_{C^{1,1}(\mathbb{R}^{N})}$, but independent of $r$.
	\end{lem}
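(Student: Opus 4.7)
My plan is to write the operator via the integral representation \eqref{Definition},
\[
\bigl[(-\Delta+m^{2})^{s}-m^{2s}\bigr]\phi_r(x)=c_{N,s}\,m^{\frac{N}{2}+s}\,\mathrm{P.V.}\!\int_{\mathbb{R}^{N}}\frac{\phi_r(x)-\phi_r(y)}{|x-y|^{\frac{N}{2}+s}}\,K_{\frac{N}{2}+s}(m|x-y|)\,dy,
\]
split the domain of integration at $|x-y|=r$, and treat the two pieces by different arguments. The basic quantitative tool is the global pointwise bound
\[
K_{\frac{N}{2}+s}(z)\leq C_{N,s}\,z^{-\frac{N}{2}-s}, \qquad \forall\,z>0,
\]
which follows from \eqref{B-asymp0}, \eqref{B-asymp1} together with the smoothness and positivity of $K_{\frac{N}{2}+s}$ on $(0,\infty)$ (so $z^{\frac{N}{2}+s}K_{\frac{N}{2}+s}(z)$ is continuous with finite limits at both ends). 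Substituting this bound together with the prefactor $m^{\frac{N}{2}+s}$ makes all powers of $m$ cancel exactly, reducing the effective kernel to a constant multiple of $|x-y|^{-(N+2s)}$, which is precisely the singular kernel of the standard fractional Laplacian.

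For the near piece $\{|x-y|\leq r\}$ I will exploit the scaling of $\phi_r(y)=\phi(y/r)$: direct differentiation yields $\|\nabla\phi_r\|_{L^{\infty}}\leq r^{-1}\|\phi\|_{C^{1,1}}$ and a Lipschitz bound $[\nabla\phi_r]_{C^{0,1}}\leq r^{-2}\|\phi\|_{C^{1,1}}$, so that Taylor's formula gives
\[
\bigl|\phi_r(y)-\phi_r(x)-\nabla\phi_r(x)\cdot(y-x)\bigr|\leq \frac{\|\phi\|_{C^{1,1}}}{2\,r^{2}}\,|x-y|^{2}.
\]
The linear correction $\nabla\phi_r(x)\cdot(y-x)$ is odd in $y-x$ and integrates to zero against the radial kernel in the $\mathrm{P.V.}$ sense, so the near piece is controlled by
\[
\frac{C\,\|\phi\|_{C^{1,1}}}{r^{2}}\int_{|x-y|\leq r}|x-y|^{2-N-2s}\,dy = \frac{C\,\|\phi\|_{C^{1,1}}}{r^{2s}},
\]
where the integral converges at the origin because $s<1$ yields $1-2s>-1$ in polar coordinates, and produces the factor $r^{2-2s}$ at the outer boundary. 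For the far piece $\{|x-y|>r\}$, I will throw away the cancellation entirely and use only $|\phi_r(x)-\phi_r(y)|\leq 2\|\phi\|_{L^{\infty}}\leq 2\|\phi\|_{C^{1,1}}$ together with the same bound on $K_{\frac{N}{2}+s}$, which gives
\[
|A_{\mathrm{far}}(x)|\leq C\,\|\phi\|_{C^{1,1}}\int_{|x-y|>r}|x-y|^{-N-2s}\,dy=\frac{C\,\|\phi\|_{C^{1,1}}}{r^{2s}}.
\]
Adding the two estimates gives the claimed bound.

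I do not anticipate a serious obstacle. The only subtle points are that the cancellation of the odd term in the near piece must be read in the $\mathrm{P.V.}$ sense, since $\int_{|x-y|\leq r}|x-y|^{1-N-2s}\,dy$ is only conditionally convergent for $s\geq\frac12$; and one must verify that the factor $m^{\frac{N}{2}+s}$ in front of the integral in \eqref{Definition} is cancelled exactly by the $m^{-(\frac{N}{2}+s)}$ produced by the uniform bound on $K_{\frac{N}{2}+s}$, which is what makes the final constant independent of $m$, in agreement with the statement. Splitting at the natural scale $|x-y|=r$ is also important, since the two resulting integrals combine to yield the balanced rate $r^{-2s}$; any other cutoff radius would produce a worse bound on at least one of the pieces.
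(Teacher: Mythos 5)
Your proof is correct and follows essentially the same route as the paper's: split at $|x-y|=r$, use the uniform bound $K_{\frac{N}{2}+s}(z)\leq C_{N,s}z^{-\frac{N}{2}-s}$ to reduce the kernel to $|x-y|^{-N-2s}$ (with the $m$-powers cancelling), control the near piece via the second-order Taylor remainder of $\phi_r$ (the linear term dropping out under the principal value), and control the far piece via $\|\phi\|_{L^\infty}$. The only difference is that you spell out the P.V. cancellation of the odd term and the rescaled $C^{1,1}$ seminorm explicitly, whereas the paper folds this into a compressed one-line estimate; the substance is identical.
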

\begin{proof}
From \eqref{Bessel_asy0} and \eqref{Bessel_asy1}, one can easily infer that, for any $r>0$ and $\nu>0$,
\begin{equation}\label{asymptotic}
  0<K_{\nu}(r)\leq\frac{C_{\nu}}{r^{\nu}}.
\end{equation}
Through straightforward calculations, we deduce that
	\begin{align*}
	&\quad \left|\left[\left(-\Delta+m^{2}\right)^{s}-m^{2s}\right]\phi_r(x)\right|\\ \nonumber
	&\leq c_{N, s} m^{\Ns}\left|P.V. \int_{\mathbb{R}^{N}} \frac{\phi_r(x)-\phi_r(y)}{|x-y|^{\Ns}} K_{\Ns}\left(m|x-y|\right)dy\right|\\ \nonumber
	&\leq c_{N, s} m^{\Ns}\left|P.V. \int_{B_r(x)}\frac{\phi_r(x)-\phi_r(y)}{|x-y|^{\Ns}} K_{\Ns}\left(m|x-y|\right) dy\right| \\ \nonumber
& \quad +c_{N, s}m^{\Ns}\left|P.V. \int_{\left(B_r(x)\right)^{c}}\frac{\phi_r(x)-\phi_r(y)}{|x-y|^{\Ns}} K_{\Ns}\left(m|x-y|\right) dy\right|\\ \nonumber
	&\leq c_{N, s}m^{\Ns}P.V.\int_{B_r(x)}\frac{\|\phi\|_{C^{1,1}(\mathbb{R}^{N})}\left|\frac{x}{r}-\frac{y}{r}\right|^2}{\left|x-y\right|^{\Ns}}\cdot \frac{C_{N,s}}{\left(m\left|x-y\right|\right)^{\Ns}}dy \\ \nonumber
& \quad +c_{N, s}m^{\Ns}\int_{\left(B_r(x)\right)^{c}}\frac{2\|\phi\|_{L^{\infty}(\mathbb{R}^{N})}}{|x-y|^{\Ns}}\cdot \frac{C_{N,s}}{\left(m\left|x-y\right|\right)^{\Ns}}dy\\ \nonumber
	&\leq \frac{C}{r^{2s}}.
	\end{align*}
	This concludes the proof of Lemma \ref{lem-MP}.	
\end{proof}	

\begin{rem}\label{rem7}
Lemma \ref{lem-MP} reveals one of the essential differences between the inhomogeneous fractional operators $(-\Delta+m^{2})^{s}$ and the fractional Laplacians $(-\Delta)^{s}$. For fractional Laplacians $(-\Delta)^{s}$, one can easily verify that $(-\Delta)^{s}\phi_{r}(x)=\frac{1}{r^{2s}}\phi\left(\frac{x}{r}\right)$ for any function $\phi\in \mathcal{L}_{s}(\mathbb{R}^{N})\cap C^{1,1}_{loc}(\mathbb{R}^{N})$.
\end{rem}

	As a consequence of \eqref{MP-1} and Lemma \ref{lem-MP}, we obtain the upper bound
    \begin{equation}\label{MP-4}
        \left(-\Delta+m^{2}\right)^{s}w_{k}(\bar{x}^k)\leq -c(\bar{x}^k)w(\bar{x}^k)+C\left(\frac{1}{d_k^{2s}}+1\right)\varepsilon_k.
    \end{equation}

	On the other hand, we have the following lower bound
	\begin{align}\label{MP-5}
	&\quad\left(-\Delta+m^{2}\right)^{s}w_k(\bar{x}^k)\\ \nonumber
	&=c_{N,s} m^{\Ns} P.V.\int_{\mathbb{R}^{N}}\frac{w_k(\bar{x}^k)-w_k(y)}{|\bar{x}^k-y|^{\Ns}} K_{\Ns}\left(m|\bar{x}^k-y|\right)dy+m^{2 s}
w_k(\bar{x}^k)\\ \nonumber
	&=c_{N, s}m^{\Ns}P.V. \int_{\Sigma}\left[\frac{w_k(\bar{x}^k)-w_k(y)}{|\bar{x}^k-y|^{\Ns}} K_{\Ns}(m|\bar{x}^k-y|)\right.\\ \nonumber
&\quad \left.+\frac{w_k(\bar{x}^k)+w_k(y)}{|\bar{x}^k-\tilde{y}|^{\Ns}}K_{\Ns}(m|\bar{x}^k-\tilde{y}|)\right]dy+m^{2s}w_k(\bar{x}^k)\\ \nonumber	
	&=c_{N,s} m^{\Ns} P.V.\int_{\Sigma}\left(\frac{K_{\Ns}(m|\bar{x}^k-y|)}{|\bar{x}^k-y|^{\Ns}}  -\frac{K_{\Ns}(m|\bar{x}^k-\tilde{y}|)}{|\bar{x}^k-\tilde{y}|^{\Ns}}\right)(w_k(\bar{x}^k)-w_k(y))dy\\ \nonumber
	&\quad +2c_{N,s}m^{\Ns} w_k(\bar{x}^k)\int_{\Sigma} \frac{K_{\Ns}(m|\bar{x}^k-\tilde{y}|)}{|\bar{x}^k-\tilde{y}|^{\Ns}}dy+m^{2s}w_k(\bar{x}^k)\\ \nonumber
	&>m^{2s}w_k(\bar{x}^k).
	\end{align}
	Here we have used the following facts:
	\begin{align*}
	&\frac{K_{\Ns}(m|\bar{x}^k-y|)}{|\bar{x}^k-y|^{\Ns}}>\frac{K_{\Ns}(m|\bar{x}^k-\tilde{y}|)}{|\bar{x}^k-\tilde{y}|^{\Ns}}, \quad \forall \,\, y\in \Sigma,\\
	&w_k(\bar{x}^k)\geq w_k(y),\quad \forall \,\, y\in \Sigma,
	\end{align*}
	which can be seen directly.
	
Next, we will carry out our proof by discussing two different cases and derive contradictions in both of these two cases.

\emph{Case (i).} There exists a $d_0>0$ such that $d_k\geq d_0$ for all $k$. Then \eqref{MP-4} implies
    \begin{equation}\label{MP-4'}
    \left(-\Delta+m^{2}\right)^{s} w_{k}(\bar{x}^k)\leq -c(\bar{x}^k)w(\bar{x}^k)+C\varepsilon_k.
    \end{equation}
	Combining \eqref{MP-5} and \eqref{MP-4'}, we derive
	\begin{equation*}
	m^{2 s}w(\bar{x}^k)\leq m^{2 s}w_k(\bar{x}^k)\leq -\left(\inf_{\{x\in\Sigma \mid w(x)>0\}}\, c(x)\right)w(\bar{x}^k)+C(1-\beta_{k})M,
	\end{equation*}
	which implies that
$$\left(\inf_{\{x\in\Sigma \mid w(x)>0\}}c(x)+m^{2s}\right)\beta_{k}\leq C(1-\beta_k).$$
This will lead to a contradiction if we take the limit $k\rightarrow+\infty$.
	
\emph{Case (ii).} Up to a subsequence (still denote by $d_{k}$), $0<d_k\leq\frac{r_{0}}{4m}$ for every $k\geq 1$. Let
$$D_k:=\left\{x=(x_1,x')\in\mathbb{R}^N| -d_k<x_1<0, |x'-(\bar{x}^k)'|<\frac{r_{0}}{m}\right\}.$$
Through further calculations, we can get the following refinement of \eqref{MP-5}:
	\begin{align}\label{MP-5'}
	&\quad\left(-\Delta+m^{2}\right)^{s}w_k(\bar{x}^k)\\\nonumber
	&\geq 2c_{N, s} m^{\Ns} w_k(\bar{x}^k)\int_{\Sigma} \frac{K_{\Ns}(m|\bar{x}^k-\tilde{y}|)}{|\bar{x}^k-\tilde{y}|^{\Ns}}dy+m^{2s}w_k(\bar{x}^k)\\\nonumber
	&\geq 2c_{N, s}c_{0}w_k(\bar{x}^k)\int_{D_k} \frac{1}{|\bar{x}^k-\tilde{y}|^{N+2s}}dy+m^{2 s} w_k(\bar{x}^k)\\\nonumber
	&\geq \frac{C_{N,s}}{d_k^{2s}}w_k(\bar{x}^k)+m^{2 s} w_k(\bar{x}^k),
	\end{align}	
	where the last inequality can be derived in similar way as \eqref{2NRP-2}. Therefore, we infer from \eqref{MP-4} and \eqref{MP-5'} that
	\begin{equation}\label{MP1-6}
	\frac{C_{N,s}}{d_{k}^{2s}}w_k(\bar{x}^k)+m^{2 s} w_k(\bar{x}^k)\leq -c(\bar{x}^k)w(\bar{x}^k)+C\left(\frac{1}{d_k^{2s}}+1\right)\varepsilon_k,
	\end{equation}
	which implies
\begin{equation}\label{2MP-1}
  \frac{C_{N,s}}{d_{k}^{2s}}+m^{2s}\leq -\inf_{\{x\in\Sigma \mid w(x)>0\}}\,c(x)+C\left(\frac{1}{d_k^{2s}}+1\right)\frac{1-\beta_{k}}{\beta_{k}}.
\end{equation}
Now we choose $k$ large enough such that $\beta_{k}>\frac{1}{2}$ and $1-\beta_k<\frac{C_{N,s}}{4C}$, then we have
\begin{equation}\label{2MP-2}
  \frac{C_{N,s}}{2d_{k}^{2s}}+m^{2s}\leq -\inf_{\{x\in\Sigma \mid w(x)>0\}}\,c(x)+2C(1-\beta_k).
\end{equation}

From \eqref{2MP-2}, we can deduce the following:

\emph{(i).} If
\begin{equation}\label{2MP-4}
  \inf_{\{x\in\Sigma \mid w(x)>0\}}\,c(x)>-\left(1+\frac{2^{4s-1}C_{N,s}}{r_{0}^{2s}}\right)m^{2s},
\end{equation}
since $0<d_{k}\leq\frac{r_{0}}{4m}$, then \eqref{2MP-2} yields a contradiction if we let $k\rightarrow+\infty$.

\emph{(ii).} If we only assume $c(x)$ is bounded from below in $\{x\in\Sigma \mid w(x)>0\}$, then we can also get a contradiction from \eqref{2MP-2} by taking the limit $k\rightarrow+\infty$, provided that there is a $0<\delta\leq\frac{r_{0}}{4m}$ such that, $0<d_{k}\leq\delta$ for $k$ large enough and
\begin{equation}\label{2MP-3}
  -\delta^{2s}\left(\inf_{\{x\in\Sigma \mid w(x)>0\}}\, c(x)+m^{2s}\right)<\frac{C_{N,s}}{2}.
\end{equation}

Now we have proved that $w(x)\leq 0$ in $\Sigma$. If there is some point $\bar{x}\in\Omega$ such that $w\left(\bar{x}\right)=0$, then from \eqref{MP-condition} and Lemma \ref{SMP-anti}, we derive immediately $w=0$ almost everywhere in $\mathbb{R}^{N}$ and hence $w=0$ in $\mathbb{R}^{N}$. This concludes our proof of Theorem \ref{MP_anti_ubdd}.
\end{proof}	

\begin{rem} \label{eg} 
Being essentially different from fractional Laplacians $(-\Delta)^{s}$ ($s\in(0,1)$), since there is a \emph{modified Bessel function of the second kind $K_{\frac{N}{2}+s}$} in the definition of inhomogeneous fractional operators $(-\Delta+m^{2})^{s}$, $(-\Delta+m^{2})^{s}$ ($s\in(0,1)$) \emph{do not possess} any \emph{invariance properties} under \emph{Kelvin-type or scaling transforms}. In order to overcome these difficulties, we need to derive fine estimates on the upper and lower bounds of $\left(-\Delta+m^{2}\right)^{s}w_k(\bar{x}_k)$ (see Lemma \ref{lem-MP}, \eqref{MP-4}, \eqref{MP-5} and \eqref{MP-5'} in the proof of Theorem \ref{MP_anti_ubdd}).
\end{rem}

\begin{rem}\label{rem19}
It can be seen from the proof that, in Theorem \ref{MP_anti_ubdd}, the assumption \eqref{MP-assumption} on $c(x)$ can be weaken into
\[\liminf_{\substack{x\in\Sigma, \,w(x)>0 \\ |x|\rightarrow+\infty}}c(x)>-m^{2s} \quad \text{and} \quad \inf_{\{x\in\Sigma \mid w(x)>0\}}\,c(x)\geq-m^{2s},\]
the same conclusions are still valid. In fact, we only need to consider, instead of Cases (i) and (ii) in the proof of Theorem \ref{MP_anti_ubdd}, the following two cases: \\
Case (a). Up to a subsequence (still denote by $d_{k}$), $d_{k}\rightarrow+\infty$ as $k\rightarrow+\infty$. \\
Case (b). There exists a $D_{0}$ such that $d_{k}\leq D_{0}$ for every $k\geq1$.
\end{rem}

From the proof of Theorem \ref{MP_anti_ubdd}, we can deduce the following narrow region principle in unbounded open sets, which improves the narrow region principle (Theorem \ref{2NRP}).
\begin{thm}[Narrow region principle in unbounded open sets]\label{NRP-Anti-ubdd}
Let $\Omega\subseteq\Sigma$ be an open set (possibly unbounded and disconnected) which can be contained in the region between $T$ and $T_{\Omega}$, where $T_{\Omega}$ is a hyper-plane that is parallel to $T$. Let $d(\Omega):=dist(T,T_{\Omega})\leq\frac{r_{0}}{2m}$. Suppose that $w\in \mathcal{L}_{s}(\mathbb{R}^{N})\cap C_{loc}^{1,1}(\Omega)$ is bounded from below and satisfies
\begin{equation}\label{NRP-anti-ubdd}
\left\{\begin{array}{ll}{(-\De+m^{2})^s w(x)+c(x)w(x)\geq 0} & {\text {at points} \,\, x\in\Omega \,\, \text{where} \,\, w(x)<0} \\ {w(x) \geq 0} & {\text {in } \Sigma \backslash \Omega} \\ {w\left(\tilde{x}\right)=-w(x)} & {\text {in } \Sigma,}\end{array}\right.
\end{equation}
where $c(x)$ is uniformly bounded from below (w.r.t. $d(\Omega)$) in $\{x\in\Omega\,|\,w(x)<0\}$. If we assume that either
\begin{equation}\label{2NRP-4-ubdd}
  \inf_{\{x\in\Omega \,\mid\, w(x)<0\}}\,c(x)>-\left(1+\frac{2^{4s-1}C_{N,s}}{r_{0}^{2s}}\right)m^{2s},
\end{equation}
or
\begin{equation}\label{2NRP-3-ubdd}
  \left(-\inf_{\left\{x\in\Omega\,|\,w(x)<0\right\}}c(x)-m^{2s}\right)d(\Omega)^{2s}<\frac{C_{N,s}}{2},
\end{equation}
where $C_{N,s}$ is the same as in \eqref{2MP-4} and \eqref{2MP-3}. Then, $w(x) \geq 0 \text { in } \Omega$. Furthermore, assume that
\begin{equation}\label{2NRP-5-ubdd}
  (-\Delta+m^{2})^s{w}(x)\geq0 \quad \text{at points} \,\, x\in\Omega \,\, \text{where} \,\, w(x)=0,
\end{equation}
then either $w>0$ in $\Omega$ or $w=0$ almost everywhere in $\mathbb{R}^{N}$.
\end{thm}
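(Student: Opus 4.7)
The strategy is to mirror the contradiction argument from the proof of Theorem~\ref{MP_anti_ubdd}, with the key simplification that the geometric narrowness of $\Omega$ forces every candidate negative-minimum point to sit in a thin strip adjacent to $T$, thereby collapsing the dichotomy Case~(i)/Case~(ii) that appeared there. Without loss of generality, take $T=\{x_1=0\}$ and $\Sigma=\{x_1<0\}$, and argue by contradiction: suppose $w<0$ somewhere in $\Omega$. Since $w\geq 0$ on $\Sigma\setminus\Omega$ and $w$ is bounded below, $-M:=\inf_\Sigma w<0$ is finite and is approached only along points of $\Omega$. Pick $x^k\in\Omega$ with $w(x^k)\leq-\beta_k M$ for some $\beta_k\uparrow 1$, and let $d_k:=\tfrac12\mathrm{dist}(x^k,T)$. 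Because $\Omega$ is contained in $\{-d(\Omega)<x_1<0\}$ with $d(\Omega)\leq \tfrac{r_0}{2m}$, one automatically obtains $d_k\leq d(\Omega)/2\leq r_0/(4m)$, which places us uniformly in the regime where the kernel asymptotic near zero is valid.

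Next I introduce the anti-symmetric perturbation $w_k(x):=w(x)-\varepsilon_k[\tilde\psi_k(x)-\psi_k(x)]$, with $\varepsilon_k:=(1-\beta_k)M$ and $\psi_k,\tilde\psi_k$ the rescaled bumps (centered at $\widetilde{x^k}$ and $x^k$, scaled by $d_k$) as in the proof of Theorem~\ref{MP_anti_ubdd}. Then $w_k(x^k)\leq -M$, and since $w_k\equiv w$ off $B_{d_k}(x^k)$, the infimum of $w_k$ over $\Sigma$ is attained at some $\bar{x}^k\in B_{d_k}(x^k)\subset\Omega$ with $w_k(\bar{x}^k)\leq -M<0$ and $w(\bar{x}^k)\leq -\beta_kM<0$. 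Applying the PDE hypothesis \eqref{NRP-anti-ubdd} at $\bar{x}^k$ and invoking Lemma~\ref{lem-MP} produces the lower bound
\[
(-\Delta+m^2)^s w_k(\bar{x}^k)\geq -c(\bar{x}^k)w(\bar{x}^k)-C\Bigl(\tfrac{1}{d_k^{2s}}+1\Bigr)\varepsilon_k,
\]
mirroring \eqref{MP-4}. The minimum-point decomposition as in \eqref{MP-5}--\eqref{MP-5'} (whose sign changes are absorbed by kernel monotonicity, together with the lower bound $\int_\Sigma K_{N/2+s}(m|\bar{x}^k-\tilde y|)\,|\bar{x}^k-\tilde y|^{-N/2-s}\,dy\gtrsim d_k^{-2s}$ valid because $d_k\leq r_0/(4m)$) furnishes the matching upper bound
\[
(-\Delta+m^2)^s w_k(\bar{x}^k)\leq \Bigl(\tfrac{C_{N,s}}{d_k^{2s}}+m^{2s}\Bigr)w_k(\bar{x}^k).
\]

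Combining the two inequalities, dividing by $-w_k(\bar x^k)\geq M$, and absorbing the $\varepsilon_k$-term once $1-\beta_k<C_{N,s}/(4C)$, one reaches the analogue of \eqref{2MP-2}:
\[
\tfrac{C_{N,s}}{2d_k^{2s}}+m^{2s}\leq -\inf_{\{x\in\Omega\,:\,w(x)<0\}}c(x)+2C(1-\beta_k).
\]
The bound $d_k\leq d(\Omega)/2$ yields $\tfrac{C_{N,s}}{2d_k^{2s}}\geq \tfrac{2^{2s-1}C_{N,s}}{d(\Omega)^{2s}}$. If \eqref{2NRP-3-ubdd} holds, then letting $k\to\infty$ and using $2^{2s-1}>1/2$ for $s>0$ directly contradicts it. If instead \eqref{2NRP-4-ubdd} is assumed, one further uses $d(\Omega)\leq r_0/(2m)$ to replace $\tfrac{2^{2s-1}C_{N,s}}{d(\Omega)^{2s}}$ by $\tfrac{2^{4s-1}C_{N,s}m^{2s}}{r_0^{2s}}$, and the limit $k\to\infty$ again contradicts the hypothesis. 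Hence $w\geq 0$ throughout $\Omega$. Finally, if $w(\bar x)=0$ at some $\bar x\in\Omega$ and \eqref{2NRP-5-ubdd} holds, Lemma~\ref{SMP-anti} at $\bar x$ produces $w\equiv 0$ almost everywhere in $\mathbb{R}^N$; otherwise $w>0$ throughout $\Omega$. The main delicate point is the quantitative matching of the constant $C_{N,s}$ between the two alternative narrowness hypotheses so that each genuinely closes the limiting inequality; everything else is an essentially verbatim adaptation of Theorem~\ref{MP_anti_ubdd}, with the Case~(i)/Case~(ii) split suppressed by the geometric narrowness of $\Omega$.
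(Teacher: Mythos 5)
Your proof is correct and takes essentially the same approach as the paper: the paper's own proof simply points back to Case~(ii) of Theorem~\ref{MP_anti_ubdd} applied to $-w$, noting that the narrowness hypothesis forces $d_k\leq\tfrac12 d(\Omega)\leq\tfrac{r_0}{4m}$, which is exactly the observation you build the argument around. You have merely spelled out the sign flips and the final arithmetic linking the two narrowness alternatives \eqref{2NRP-4-ubdd} and \eqref{2NRP-3-ubdd} to the limiting inequality, which the paper leaves to the reader.
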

\begin{proof}
By replacing $w$ by $-w$, Theorem \ref{NRP-Anti-ubdd} can be proved by using quite similar arguments as in the proof of Theorem \ref{MP_anti_ubdd}. More precisely, note that $d_{k}\leq \frac{1}{2}d(\Omega)\leq\frac{r_{0}}{4m}$, Theorem \ref{NRP-Anti-ubdd} follows immediately from \emph{Case (ii)} in the proof of Theorem \ref{MP_anti_ubdd}. We omit the details here.
\end{proof}
\begin{rem}\label{rem8}
In Theorem \ref{NRP-Anti-ubdd}, we allow the open set $\Omega$ to be unbounded without the additional assumption $\liminf_{|x|\rightarrow+\infty}w(x)\geq0$ in Theorem \ref{2NRP}.
\end{rem}

From the proof of Theorem \ref{MP_anti_ubdd}, we can also deduce the following maximum principle in unbounded domains, which develops the \emph{Decay at infinity (I)} (Theorem \ref{P:decay}).
\begin{thm}[Decay at infinity (II)]\label{dati}
Let $\Omega$ be an unbounded open set in $\Sigma$. Assume $w\in \mathcal{L}_{s}(\mathbb{R}^{N})\cap C_{loc}^{1,1}(\Omega)$ is bounded from below and satisfies
\begin{equation}\label{dati-eq}
\left\{\begin{array}{ll}{(-\De+m^{2})^s w(x)+c(x)w(x)\geq 0} & {\text {at points} \,\, x\in\Omega \,\, \text{where} \,\, w(x)<0} \\ {w(x) \geq 0} & {\text { in } \Sigma \backslash \Omega} \\ {w\left(\tilde{x}\right)=-w(x)} & {\text { in } \Sigma}\end{array}\right.
\end{equation}
with
\begin{equation}\label{dati-con}
\liminf_{\substack{x\in\Omega,\,w(x)<0 \\ |x|\rightarrow \infty}}c(x)>-m^{2s},
\end{equation}
then there exists a $R_{0}>0$ large enough and $\gamma_{0}\in (0,1)$ close enough to $1$ ($R_{0}$ and $\gamma_{0}$ are independent of $w$ and $\Sigma$) such that, if $\hat{x}\in\Omega$ satisfying
$$
w\left(\hat{x}\right)\leq\gamma_{0}\inf _{\Omega}w(x)<0,
$$
then $\left|\hat{x}\right|\leq R_{0}$.
\end{thm}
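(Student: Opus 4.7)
The strategy parallels the proof of Theorem~\ref{MP_anti_ubdd}, with $\inf_\Omega w$ playing the role previously played by $\sup_\Sigma w$. Suppose the conclusion fails; taking $R_0=k$ and $\gamma_0=\beta_k:=1-1/k$ in the negation produces a sequence $x^k\in\Omega$ with $|x^k|\to\infty$ and $w(x^k)\leq\beta_k\inf_\Omega w<0$. Writing $M:=-\inf_\Omega w>0$ and $\varepsilon_k:=(1-\beta_k)M$, this reads $w(x^k)\leq -\beta_k M$. Without loss of generality assume $T=\{x_1=\lambda\}$, $\Sigma=\{x_1<\lambda\}$, set $d_k:=\tfrac{1}{2}\mathrm{dist}(x^k,T)$, and introduce the anti-symmetric perturbation
$$w_k(x) := w(x) + \varepsilon_k\bigl[\psi_k(x) - \tilde{\psi}_k(x)\bigr], \qquad \psi_k(x):=\psi\!\left(\tfrac{x-\widetilde{x^k}}{d_k}\right), \quad \tilde{\psi}_k(x):=\psi\!\left(\tfrac{x-x^k}{d_k}\right),$$
with $\psi$ the standard smooth bump used in the proof of Theorem~\ref{MP_anti_ubdd}. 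Then $w_k(x^k)\leq -M$ by construction.

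Outside $B_{d_k}(x^k)$ both bumps vanish on $\Sigma$, and on $B_{d_k}(x^k)\setminus\Omega$ the sign condition $w\geq 0$ gives $w_k\geq -\varepsilon_k>-M$ for $k$ large. Combined with $w\geq -M$ on all of $\Sigma$, this forces any minimizer $\bar{x}^k$ of $w_k$ on $\Sigma$ to lie in $B_{d_k}(x^k)\cap\Omega$, where $w$ is $C^{1,1}_{\mathrm{loc}}$; existence of a minimizer follows by compactness of the closure and the boundary analysis just made. Moreover $\psi_k(\bar{x}^k)=0$ because $|\bar{x}^k-\widetilde{x^k}|\geq 3d_k$, so $w(\bar{x}^k)=w_k(\bar{x}^k)+\varepsilon_k\tilde{\psi}_k(\bar{x}^k)\in[-M,-M+\varepsilon_k]$, whence $w(\bar{x}^k)<0$ for $k$ large and the differential inequality in~\eqref{dati-eq} applies at $\bar{x}^k$.

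Next we estimate $(-\Delta+m^{2})^s w_k(\bar{x}^k)$ in two ways. The equation together with Lemma~\ref{lem-MP} applied to the scaled bumps furnishes the lower bound
$$(-\Delta+m^{2})^s w_k(\bar{x}^k) \geq -c(\bar{x}^k)w(\bar{x}^k) - C\varepsilon_k\bigl(d_k^{-2s}+1\bigr).$$
For the upper bound, splitting the defining integral into $\Sigma$ and its reflection, using anti-symmetry, the monotonicity of $r\mapsto K_{\frac{N}{2}+s}(mr)/r^{\frac{N}{2}+s}$, and the fact that $\bar{x}^k$ is a minimizer of $w_k$ on $\Sigma$, the interaction term becomes nonpositive, leaving
$$(-\Delta+m^{2})^s w_k(\bar{x}^k) \leq w_k(\bar{x}^k)\bigl[2c_{N,s}m^{\frac{N}{2}+s}I_k + m^{2s}\bigr], \quad I_k:=\int_\Sigma \frac{K_{\frac{N}{2}+s}(m|\bar{x}^k-\tilde{y}|)}{|\bar{x}^k-\tilde{y}|^{\frac{N}{2}+s}}\,dy.$$
Chaining the two bounds, substituting $w(\bar{x}^k)=w_k(\bar{x}^k)+\varepsilon_k\tilde{\psi}_k(\bar{x}^k)$, and using $-w_k(\bar{x}^k)\geq M$ yields the master inequality
$$M\bigl[c(\bar{x}^k)+ 2c_{N,s}m^{\frac{N}{2}+s}I_k + m^{2s}\bigr] \leq C\varepsilon_k\bigl(d_k^{-2s}+1\bigr) + |c(\bar{x}^k)|\varepsilon_k. \qquad(\star)$$

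Since $|\bar{x}^k|\to\infty$, hypothesis~\eqref{dati-con} gives $c(\bar{x}^k)+m^{2s}\geq\delta_0>0$ for $k$ large. We conclude by splitting on $d_k$: if $d_k\to\infty$ along a subsequence, the exponential decay~\eqref{Bessel_asy1} of $K_{\frac{N}{2}+s}$ forces $I_k\to 0$, so the left side of $(\star)$ stays bounded below by $M\delta_0$ while the right side tends to $0$; if $d_k$ remains bounded away from $0$, then $I_k$ is uniformly bounded below by a positive constant and the same contradiction arises; finally, if $d_k\to 0$ along a subsequence, the narrow-region estimate~\eqref{2NRP-2} gives $I_k\geq C_{N,s}d_k^{-2s}$, so the leading term $2Mc_{N,s}m^{\frac{N}{2}+s}C_{N,s}d_k^{-2s}$ on the left of $(\star)$ strictly dominates the term $C\varepsilon_k d_k^{-2s}$ on the right, since $\varepsilon_k\to 0$. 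The main obstacle is precisely this last subcase: both sides blow up at the same rate $d_k^{-2s}$, and the contradiction rests on the fact that the left-hand coefficient is the fixed positive number $M$ whereas the right-hand coefficient $\varepsilon_k=(1-\beta_k)M$ vanishes as $\beta_k\to 1$. This structural feature is exactly why the theorem requires $\gamma_0$ close enough to $1$ rather than being arbitrary in $(0,1)$.
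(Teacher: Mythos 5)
Your proof is correct and follows the same route the paper indicates (it only sketches the argument by pointing to the proof of Theorem~\ref{MP_anti_ubdd} and saying to replace $w$ by $-w$ and $\beta_k$ by $\gamma_k$): same anti-symmetric bump perturbation, same upper/lower bound estimates via Lemma~\ref{lem-MP} and kernel monotonicity at the perturbed extremum, same trichotomy on $d_k$. The only place to be slightly careful is the term $|c(\bar{x}^k)|\varepsilon_k$ on the right of $(\star)$, since $c$ is a priori unbounded from above; this is harmless because when $c(\bar{x}^k)\geq 0$ the difference $Mc(\bar{x}^k)-|c(\bar{x}^k)|\varepsilon_k=c(\bar{x}^k)(M-\varepsilon_k)$ is nonnegative and may be discarded, while when $c(\bar{x}^k)<0$ hypothesis~\eqref{dati-con} forces $|c(\bar{x}^k)|<m^{2s}$, so the term vanishes as $\varepsilon_k\to0$ in each of your three cases.
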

\begin{proof}
Theorem \ref{dati} can be proved via similar contradiction arguments as Theorem \ref{MP_anti_ubdd}.

Indeed, suppose on the contrary that there exists sequences $\{x^{k}\}\in\Omega$ and $\{\gamma_{k}\}\in(0,1)$ such that
\begin{equation}\label{dati-1}
  |x^{k}|\rightarrow+\infty, \quad \gamma_{k}\rightarrow 1, \quad \text{and} \quad w(x^{k})\leq\gamma_{k}\inf _{\Omega}w(x)<0.
\end{equation}
Then, we can derive a contradiction through entirely similar way as in the proof of Theorem \ref{MP_anti_ubdd} by simply replacing $w$ with $-w$ and $\beta_{k}$ with $\gamma_{k}$. We omit the details here.
\end{proof}
\begin{rem}\label{rem11}
We say \emph{Decay at infinity (II)} Theorem \ref{dati} developed \emph{Decay at infinity (I)} Theorem \ref{P:decay} in the sense that, not only the positions of minimal points but also the positions of ``almost" negative minimal points were controlled by a radius $R_{0}$ in Theorem \ref{dati}. Moreover, Theorem \ref{dati} also tell us that, if $\inf _{\Omega}w(x)<0$, then $\Omega\cap B_{R_{0}}(0)\neq\emptyset$ and the negative minimum can be attained in $\Omega\cap B_{R_{0}}(0)$.
\end{rem}

As an immediate application of Theorem \ref{MP_anti_ubdd}, we can obtain the following Liouville type Theorem in $\mathbb{R}^{N}$. For Liouville theorem on $s$-harmonic functions in $\mathbb{R}^{N}$, please refer to \cite{F} and the references therein.
\begin{thm}(Liouville Theorem)\label{Liouville}
	Assume that $u\in\mathcal{L}_{s}(\mathbb{R}^{N})\cap C_{\text {loc}}^{1,1}(\mathbb{R}^{N})$ is bounded, if
	\begin{equation}\label{MP-6}
	\left(-\Delta+m^{2}\right)^{s}u(x)=f(u(x)) \quad \text{in}\,\,\mathbb{R}^{N},
	\end{equation}
where the function $f(\cdot)$ satisfies
\begin{equation}\label{MP-con2}
 \sup_{\substack{t_{1},\,t_{2}\in[\inf u,\,\sup u] \\ t_1>t_2}}\frac{f(t_1)-f(t_2)}{t_1-t_2}<m^{2s}.
 \end{equation}
Then
$$u\equiv C \qquad \text{with} \,\, C \,\, \text{satisfying} \,\, f(C)=m^{2s}C.$$
\end{thm}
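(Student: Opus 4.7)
The plan is to combine the maximum principle in unbounded domains (Theorem \ref{MP_anti_ubdd}) with a ``moving planes in every direction'' argument. After a rotation it suffices to work in the direction $e_{1}$. For each $\lambda\in\mathbb{R}$ I would set $T_{\lambda}=\{x_{1}=\lambda\}$, $\Sigma_{\lambda}=\{x_{1}<\lambda\}$, let $x^{\lambda}=(2\lambda-x_{1},x')$ denote reflection across $T_{\lambda}$, and introduce the anti-symmetric difference $w_{\lambda}(x):=u_{\lambda}(x)-u(x)$ with $u_{\lambda}(x):=u(x^{\lambda})$. Because the kernel in \eqref{Definition} depends only on $|x-y|$, the operator $(-\Delta+m^{2})^{s}$ commutes with reflections, so $u_{\lambda}$ solves the same equation as $u$ and hence
\[
(-\Delta+m^{2})^{s}w_{\lambda}(x)=f(u_{\lambda}(x))-f(u(x))\qquad\text{in }\mathbb{R}^{N}.
\]

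The next step is a Lagrange-type linearization: on $\{w_{\lambda}>0\}$ I would set
\[
c_{\lambda}(x):=-\frac{f(u_{\lambda}(x))-f(u(x))}{u_{\lambda}(x)-u(x)},
\]
so that $(-\Delta+m^{2})^{s}w_{\lambda}+c_{\lambda}w_{\lambda}=0$ there. The strict bound \eqref{MP-con2} is precisely tailored to force $\inf_{\{w_{\lambda}>0\}}c_{\lambda}>-m^{2s}$. Since $w_{\lambda}$ is bounded (because $u$ is), anti-symmetric with respect to $T_{\lambda}$, and satisfies the required pointwise inequality on $\{w_{\lambda}>0\}$, every hypothesis of Theorem \ref{MP_anti_ubdd} is in place, and the theorem yields $w_{\lambda}\leq0$ in $\Sigma_{\lambda}$. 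As $\lambda$ ranges over $\mathbb{R}$, this gives $u(y_{1},x')\leq u(x_{1},x')$ whenever $y_{1}>x_{1}$, so $u$ is non-increasing in $x_{1}$.

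For the reverse monotonicity I would run the argument again in the direction $-e_{1}$ (equivalently, apply Theorem \ref{MP_anti_ubdd} to $\tilde{u}(x):=u(-x_{1},x')$, which solves the same equation by reflection invariance of the operator), obtaining that $u$ is also non-decreasing in $x_{1}$. Hence $u$ is independent of $x_{1}$; since the coordinate direction was arbitrary, $u\equiv C$ for some constant $C$. Finally, the principal-value integral in \eqref{Definition} vanishes identically on constants, so $(-\Delta+m^{2})^{s}C=m^{2s}C$, and comparing with \eqref{MP-6} yields $f(C)=m^{2s}C$. The only nontrivial point in the whole argument is the linearization step: one must produce the coefficient $c_{\lambda}$ satisfying the strict lower bound $c_{\lambda}>-m^{2s}$ uniformly on the set $\{w_{\lambda}>0\}$, and this is exactly what the strict inequality in \eqref{MP-con2} is designed to deliver, so no further technical obstruction arises.
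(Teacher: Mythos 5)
Your proposal is correct and follows essentially the same route as the paper: you apply Theorem \ref{MP_anti_ubdd} to the anti-symmetric difference $w_{\lambda}=u_{\lambda}-u$, with the coefficient $c_{\lambda}$ obtained by the Lagrange quotient, and the strict bound \eqref{MP-con2} delivers $\inf c_{\lambda}>-m^{2s}$ exactly as required. The paper fixes a single arbitrary hyper-plane $T$, applies the maximum principle to both half-spaces to get $w\equiv 0$, and concludes $u$ is symmetric about every $T$; you instead range $\lambda$ over $\mathbb{R}$ and phrase the conclusion as monotonicity in both orientations of each coordinate direction, but this is the same argument in mildly different words and leads to the same constancy.
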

\begin{proof}
Let $T$ be any hyper-plane, $\Sigma$ be the half space on one side of the plane $T$. Set $w(x)=u(\tilde{x})-u(x)$ for all $x\in \Sigma$, where $\tilde{x}$ is the reflection of $x$ with respect to $T$. Then, $w \in\mathcal{L}_{s}(\mathbb{R}^{N})\cap C_{\text {loc}}^{1,1}(\mathbb{R}^{N})$ is bounded, and at any points $x\in\Sigma$ where $w(x)>0$, one has $\left(-\Delta+m^{2}\right)^{s}w(x)=f(u(\tilde{x}))-f(u(x))=c(x)w(x)$, where $c(x):=\frac{f(u(\tilde{x}))-f(u(x))}{u(\tilde{x})-u(x)}$ satisfies
\[\sup_{\{x\in\Sigma \mid w(x)>0\}}c(x)<m^{2s}.\]
Therefore, applying Theorem \ref{MP_anti_ubdd}, we arrive immediately $w\leq 0$ in $\Sigma$. Similarly, we can prove that $w\leq 0$ in $\mathbb{R}^N\setminus\Sigma$. Hence $w\equiv0$ in $\mathbb{R}^{N}$, and $u$ is symmetric with respect to $T$. Since $T$ is arbitrary, we must have $u\equiv C$ in $\mathbb{R}^{N}$. By the definition of $(-\Delta+m^{2})^{s}$, one has
	\begin{equation*}
	f(C)=\left(-\Delta+m^{2}\right)^{s} u(x)=m^{2s}u(x)=m^{2s}C,
	\end{equation*}
thus $C$ is determined by the equation $f(C)=m^{2s}C$. This finishes the proof of Theorem \ref{Liouville}.
\end{proof}

Next, let us consider the following equation
\begin{equation}\label{MP-7}
\left(-\Delta+m^{2}\right)^{s} u(x)=f(u(x)), \quad \forall \,\, x\in \mathbb{R}^N.
\end{equation}

As another application of Theorem \ref{MP_anti_ubdd}, we derive the following monotonicity result on \eqref{MP-7}.
\begin{thm}\label{Mono_1}
	Suppose $u\in\mathcal{L}_{s}(\mathbb{R}^{N})\cap C_{\text {loc}}^{1,1}(\mathbb{R}^{N})$ is a solution of \eqref{MP-7}, and
    $$|u(x)|\leq 1,\quad\forall \, x\in \mathbb{R}^{N},$$
	\begin{equation}\label{MP-8}
	\lim_{x_N\rightarrow\pm \infty}u(x',x_N)=\pm1 \quad \text{uniformly w.r.t.} \,\, x' \in \mathbb{R}^{N-1}.
	\end{equation}
Assume there exists a $\delta>0$ such that
	\begin{equation}\label{MP-9}
	\sup_{\substack{-1\leq t_{1}<t_{2}\leq-1+\delta \\ 1-\delta\leq t_{1}<t_{2}\leq1}}\frac{f(t_{2})-f(t_{1})}{t_{2}-t_{1}}<m^{2s},
	\end{equation}
	then there exists $M>0$ such that, $u(x)$ is strictly monotone increasing w.r.t. $x_{N}$ provided that $|x_{N}|>M$.
\end{thm}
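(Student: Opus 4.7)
My plan is to establish Theorem \ref{Mono_1} by a moving-planes argument in the $x_{N}$-direction, starting the planes from $\pm\infty$ and invoking the maximum principle for anti-symmetric functions in half-spaces (Theorem \ref{MP_anti_ubdd}) together with the strong maximum principle (Lemma \ref{SMP-anti}). For $\lambda\in\mathbb{R}$, put $T_{\lambda}:=\{x_{N}=\lambda\}$, $\Sigma^{\lambda}_{-}:=\{x_{N}<\lambda\}$, $\Sigma^{\lambda}_{+}:=\{x_{N}>\lambda\}$, and write $x^{\lambda}:=(x',2\lambda-x_{N})$ for the reflection of $x$ across $T_{\lambda}$. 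Define
\[
v_{\lambda}(x):=u(x)-u(x^{\lambda}),
\]
which is anti-symmetric with respect to $T_{\lambda}$, bounded, and lies in $\mathcal{L}_{s}(\mathbb{R}^{N})\cap C^{1,1}_{\mathrm{loc}}(\mathbb{R}^{N})$. Since $u$ solves \eqref{MP-7}, one has $(-\Delta+m^{2})^{s}v_{\lambda}(x)=c_{\lambda}(x)v_{\lambda}(x)$ pointwise, where $c_{\lambda}(x):=[f(u(x))-f(u(x^{\lambda}))]/[u(x)-u(x^{\lambda})]$ whenever the denominator is nonzero.

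The core step is the following claim: there exists $\Lambda>0$ such that $v_{\lambda}\leq 0$ in $\Sigma^{\lambda}_{-}$ for every $\lambda\leq -\Lambda$, and $v_{\lambda}\geq 0$ in $\Sigma^{\lambda}_{+}$ for every $\lambda\geq\Lambda$. Using the uniform limits \eqref{MP-8}, I would choose $\Lambda$ so that $|u+1|<\delta/2$ on $\{x_{N}\leq -\Lambda\}$ and $|u-1|<\delta/2$ on $\{x_{N}\geq\Lambda\}$, with $\delta$ as in \eqref{MP-9}. For $\lambda\leq -\Lambda$, if $v_{\lambda}(x)>0$ at some $x\in\Sigma^{\lambda}_{-}$, then $u(x)\in[-1,-1+\delta/2]$ (because $x_{N}<\lambda\leq-\Lambda$) and $-1\leq u(x^{\lambda})<u(x)<-1+\delta$, so both $u(x)$ and $u(x^{\lambda})$ lie in $[-1,-1+\delta]$; by \eqref{MP-9} this yields a uniform gap $c_{\lambda}(x)\leq m^{2s}-\eta$ for some $\eta>0$ independent of $\lambda$ and $x$. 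Setting $\tilde{c}_{\lambda}:=-c_{\lambda}$, the function $v_{\lambda}$ is bounded and anti-symmetric on the half-space $\Sigma^{\lambda}_{-}$ and satisfies $(-\Delta+m^{2})^{s}v_{\lambda}+\tilde{c}_{\lambda}v_{\lambda}\leq 0$ at points where $v_{\lambda}>0$, with $\inf\tilde{c}_{\lambda}\geq -m^{2s}+\eta>-m^{2s}$; Theorem \ref{MP_anti_ubdd} then forces $v_{\lambda}\leq 0$ throughout $\Sigma^{\lambda}_{-}$. The symmetric claim for $\lambda\geq\Lambda$ follows by applying the same argument to $-v_{\lambda}$ on $\Sigma^{\lambda}_{+}$, this time invoking the interval $[1-\delta,1]$ in \eqref{MP-9}.

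To conclude, set $M:=\Lambda$. Given $y_{N}<z_{N}<-M$ and $x'\in\mathbb{R}^{N-1}$, choose $\lambda:=(y_{N}+z_{N})/2\leq -M$ and $x:=(x',y_{N})\in\Sigma^{\lambda}_{-}$; then $x^{\lambda}=(x',z_{N})$ and the above claim gives $u(x',y_{N})\leq u(x',z_{N})$. The case $M<y_{N}<z_{N}$ is analogous. Strictness follows from the strong maximum principle: should equality ever hold, $v_{\lambda}$ would attain an interior zero in $\Sigma^{\lambda}_{-}$ and $(-\Delta+m^{2})^{s}v_{\lambda}=c_{\lambda}v_{\lambda}=0$ at that point, so Lemma \ref{SMP-anti} (equivalently, the final assertion of Theorem \ref{MP_anti_ubdd}) would force $v_{\lambda}\equiv 0$ in $\mathbb{R}^{N}$, contradicting the distinct limits $\pm 1$ of $u$ in \eqref{MP-8}. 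The main obstacle is precisely ensuring that the linearization coefficient $c_{\lambda}$ stays uniformly below $m^{2s}$ at every point where $v_{\lambda}$ has the wrong sign; this is exactly what \eqref{MP-8} buys us by confining both $u(x)$ and $u(x^{\lambda})$ to one of the two small intervals in \eqref{MP-9}, and it is also why only monotonicity for $|x_{N}|>M$ (rather than on all of $\mathbb{R}^{N}$) can be expected under these assumptions.
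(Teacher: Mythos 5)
Your proposal is correct and follows essentially the same route as the paper's proof: compare $u$ with its reflection across $\{x_N=\lambda\}$, use \eqref{MP-8} to force both $u(x)$ and $u(x^\lambda)$ into one of the two intervals in \eqref{MP-9} at any point where the anti-symmetric difference has the wrong sign, then invoke Theorem \ref{MP_anti_ubdd} for the sign and Lemma \ref{SMP-anti} for the strictness. The only difference from the paper is cosmetic — you work on $\{x_N<\lambda\}$ for $\lambda\leq-\Lambda$ and on $\{x_N>\lambda\}$ for $\lambda\geq\Lambda$, whereas the paper uses $\Sigma_\lambda=\{x_N>\lambda\}$ throughout, which is equivalent by anti-symmetry.
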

\begin{proof}
For arbitrary $\lambda\in\mathbb{R}$, let $T_{\lambda}:=\left\{x \in \mathbb{R}^{N} \,|\, x_{N}=\lambda\right\}$, $\Sigma_{\lambda}:=\left\{x \in \mathbb{R}^{N} \,|\, x_{N}>\lambda\right\}$ be the region above the plane, and $x^{\lambda}:=\left(x_{1}, x_{2}, \ldots, 2 \lambda-x_{N}\right)$ be the reflection of point $x$ about the plane $T_{\lambda}$.

We only need to show that $w_{\lambda}(x):=u(x^{\lambda})-u(x)\leq 0$ in $\Sigma_{\lambda}$ for any $\lambda$ with $|\lambda|$ sufficiently large. By the assumption \eqref{MP-8}, there exists $M>0$ such that $u(x)\in[-1,-1+\delta]\cup[1-\delta,1]$ for any $x$ with $|x_N|>M$. Consequently, for any $|\lambda|>M$, at any point $x\in \Sigma_{\lambda}$ where $w_\lambda(x)=u(x^\lambda)-u(x)>0$, we infer from assumption \eqref{MP-9} that $\left(-\Delta+m^{2}\right)^{s} w_\lambda(x)=f(u(x^\lambda))-f(u(x))=c_{\lambda}(x)w_\lambda(x)$ with $c_{\lambda}(x):=\frac{f(u(x^\lambda))-f(u(x))}{u(x^{\lambda})-u(x)}$ satisfying
\[\sup_{\{x\in\Sigma_{\lambda} \mid w_{\lambda}(x)>0\}}c_{\lambda}(x)<m^{2s}.\]
Therefore, we deduce from Theorem \ref{MP_anti_ubdd} that $w_{\lambda}(x):=u(x^{\lambda})-u(x)\leq 0$ in $\Sigma_{\lambda}$ for all $\lambda$ with $|\lambda|>M$.

Now, suppose that there exists a $\widetilde{\lambda}\in(-\infty,-M)\cup(M,+\infty)$ and a point $\hat{x}\in\Sigma_{\widetilde{\lambda}}$ such that $w_{\widetilde{\lambda}}(\hat{x})=0$. Then, it follows that
\begin{equation}\label{MP-10'}
  (-\Delta+m^{2})^{s}w_{\widetilde{\lambda}}(\hat{x})=f(u((\hat{x})^{\widetilde{\lambda}}))-f(u(\hat{x}))=0,
\end{equation}
and hence we can derive from Lemma \ref{SMP-anti} immediately that $w_{\widetilde{\lambda}}(x)=0$ almost everywhere in $\mathbb{R}^{N}$, which contradicts assumption \eqref{MP-8}. Thus $w_{\lambda}(x):=u(x^{\lambda})-u(x)<0$ in $\Sigma_{\lambda}$ for all $\lambda$ with $|\lambda|>M$. This finishes our proof of Theorem \ref{Mono_1}.
\end{proof}
\begin{rem}\label{rem1}
One should note that the De Giorgi type nonlinearity $f(u)=u-u^3$ satisfies condition \eqref{MP-9}. The assumption \eqref{MP-9} in Theorem \ref{Mono_1} is weaker than those assumed in \cite{BHM,CLiu,CW1,CW2,CW3,DSV} for Laplacian $-\Delta$ and fractional Laplaicans $(-\Delta)^{s}$, which requires $f(\cdot)$ to be non-increasing on $[-1,-1+\delta]$ and $[1-\delta,1]$.
\end{rem}

\subsection{Direct method of moving planes and its applications}
By using various maximum principles for anti-symmetric functions established in subsection 2.1, we will apply the direct method of moving planes to investigate symmetry and monotonicity of solutions to various problems involving the pseudo-relativistic Schr\"{o}dinger operators $(-\Delta+m^{2})^{s}$.

\subsubsection{Bounded domains}
Consider the following Dirichlet problem on unit ball:
\begin{equation}\label{Ball}
	\begin{cases} \left(-\Delta+m^{2}\right)^{s}u(x)=f(u(x)),\quad \forall \,\,x\in B_{1}(0),\\ u(x)>0,\quad \forall \,\, x\in B_{1}(0),\\
	u=0, \quad \forall \,\, x\in\mathbb{R}^N\setminus B_{1}(0).
	\end{cases}
	\end{equation}
By applying the direct method of moving planes, we will prove the following symmetry and monotonicity result. For related results for $-\Delta$ or $(-\Delta)^{s}$, please refer to \cite{CLM,GNN1}.
\begin{thm}\label{thm-221}
Assume that $u\in C^{1,1}_{loc}(B_{1}(0))\cap C(\overline{B_{1}(0)})$ is a positive solution to \eqref{Ball} with $f(\cdot)$ being Lipschitz continuous. Then $u$ must be radially symmetric and strictly monotone decreasing w.r.t. the origin $0$.
\end{thm}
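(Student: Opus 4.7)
My plan is to apply the direct method of moving planes in a fixed direction and then invoke rotational invariance. By the rotational invariance of \eqref{Ball}, it suffices to show that $u$ is symmetric about $\{x_1=0\}$ and monotone in $x_1$ on the left half-ball. For $\lambda \in (-1,0]$, set $T_\lambda := \{x_1=\lambda\}$, $\Sigma_\lambda := B_1(0) \cap \{x_1 < \lambda\}$, and let $x^\lambda$ denote the reflection of $x$ about $T_\lambda$; extending $u \equiv 0$ outside $B_1(0)$, the function $w_\lambda(x) := u(x^\lambda)-u(x)$ is antisymmetric with respect to $T_\lambda$, continuous and bounded on $\mathbb{R}^N$, and lies in $\mathcal{L}_s(\mathbb{R}^N)$. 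The Lipschitz continuity of $f$ yields
\[(-\Delta+m^2)^s w_\lambda(x) + c_\lambda(x)\, w_\lambda(x) = 0\]
at points where $w_\lambda(x) \neq 0$, with $|c_\lambda(x)| \leq L$, the Lipschitz constant of $f$. For $x$ in the half-space $\{x_1<\lambda\}$ but outside $\Sigma_\lambda$ one has $u(x)=0$ and $u(x^\lambda) \geq 0$, so $w_\lambda(x) \geq 0$ there, supplying the exterior sign hypothesis required by the maximum principles of Section 2.

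The starting step follows immediately from Theorem~\ref{2NRP}: when $\lambda$ is close enough to $-1$, $\Sigma_\lambda$ is contained in a slab of width $\lambda+1$ which satisfies the narrowness condition \eqref{2NRP-3} (because $c_\lambda$ is bounded below by $-L$), so $w_\lambda \geq 0$ in $\Sigma_\lambda$. Define
\[\lambda_0 := \sup\{\lambda \in (-1,0] : w_\mu \geq 0 \text{ in } \Sigma_\mu \text{ for every } \mu \in (-1,\lambda]\}.\]
By continuity, $w_{\lambda_0} \geq 0$ in $\Sigma_{\lambda_0}$; I claim $\lambda_0 = 0$. Suppose not. At any interior zero $x_0$ of $w_{\lambda_0}$ one has $(-\Delta+m^2)^s w_{\lambda_0}(x_0) = c_{\lambda_0}(x_0) \cdot 0 = 0$, so the strong maximum principle for antisymmetric functions (Lemma~\ref{SMP-anti}, or the second conclusion of Theorem~\ref{MP Anti}) forces either $w_{\lambda_0} > 0$ throughout $\Sigma_{\lambda_0}$ or $w_{\lambda_0} \equiv 0$ almost everywhere in $\mathbb{R}^N$. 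The latter would make $u$ symmetric about $T_{\lambda_0}$; but the reflection of $B_1(0)$ about $T_{\lambda_0}$ with $\lambda_0<0$ is the ball centered at $(2\lambda_0,0,\ldots,0)\neq 0$, so one can find $x \notin B_1(0)$ with $x^{\lambda_0} \in B_1(0)$, giving $w_{\lambda_0}(x) = u(x^{\lambda_0})>0$, a contradiction. Hence $w_{\lambda_0}>0$ strictly in $\Sigma_{\lambda_0}$.

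To derive a contradiction I slide the plane slightly further. Fix $\delta>0$ small enough that any slab of width $\leq 2\delta$ satisfies the narrowness condition \eqref{2NRP-3} with bound $L$, and set $\eta := \min_{\overline{\Sigma_{\lambda_0-\delta}}} w_{\lambda_0} > 0$ by compactness and strict positivity. For $\epsilon>0$ small, uniform continuity of $u$ yields $w_{\lambda_0+\epsilon} \geq \eta/2 > 0$ on $\Sigma_{\lambda_0-\delta}$, while the remaining region $N_\epsilon := \Sigma_{\lambda_0+\epsilon} \setminus \Sigma_{\lambda_0-\delta}$ lies in a slab of width $\delta+\epsilon \leq 2\delta$ adjacent to $T_{\lambda_0+\epsilon}$. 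Combining with the exterior sign check, $w_{\lambda_0+\epsilon} \geq 0$ on $\{x_1<\lambda_0+\epsilon\} \setminus N_\epsilon$, so Theorem~\ref{2NRP} applied on $N_\epsilon$ yields $w_{\lambda_0+\epsilon} \geq 0$ in $N_\epsilon$, hence throughout $\Sigma_{\lambda_0+\epsilon}$, contradicting the definition of $\lambda_0$. Therefore $\lambda_0 = 0$ and $w_0 \geq 0$ in $\Sigma_0$; the mirror-image moving planes argument starting near $\lambda = 1$ gives $w_0 \leq 0$, so $u$ is symmetric about $T_0$. Rotational invariance then produces radial symmetry about the origin, and the strict inequality $w_\lambda > 0$ in $\Sigma_\lambda$ for every $\lambda \in (-1,0)$ (from the strong maximum principle above) translates into strict radial monotone decrease of $u$. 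The main obstacle is precisely this closing step: Theorem~\ref{2NRP} requires narrowness in \emph{width} rather than measure, so one must pre-fix the interior compact set $\Sigma_{\lambda_0-\delta}$, carry uniform strict positivity across the small shift by continuity, and only then choose $\epsilon$ so that $N_\epsilon$ remains a genuinely thin slab adjacent to $T_{\lambda_0+\epsilon}$.
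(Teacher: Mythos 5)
Your argument matches the paper's proof in both structure and key steps: start the moving plane near $\lambda=-1$ via the narrow region principle (Theorem \ref{2NRP}), define $\lambda_0$ as the supremal position, rule out $w_{\lambda_0}\equiv 0$ at a hypothetical $\lambda_0<0$ by noting that reflection carries points outside $B_1(0)$ to interior points where $u>0$, invoke the strong maximum principle (Lemma \ref{SMP-anti}) to get $w_{\lambda_0}>0$ strictly, and then slide a bit further by splitting $\Sigma_{\lambda_0+\epsilon}$ into a compact part where $w$ stays positive by continuity and a thin slab $N_\epsilon$ adjacent to $T_{\lambda_0+\epsilon}$ handled again by Theorem \ref{2NRP}. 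This is the same decomposition and the same sequence of lemmas the authors use, so the proposal is correct and not a genuinely different route.
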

\begin{proof}
In order to carry out the moving plane procedure, we choose an arbitrary direction to be the $x_{1}$-direction. For arbitrary $\lambda\in(-1,1)$, let
$$
T_{\lambda}:=\left\{x \in \mathbb{R}^{N} | x_{1}=\lambda\right\}
$$
be the moving planes,
\begin{equation}
\Sigma_{\lambda}:=\left\{x \in B_{1}(0) \mid x_{1}<\lambda\right\}
\end{equation}
be the region to the left of the plane in $B_{1}(0)$, and
$$
x^{\lambda}:=\left(2 \lambda-x_{1}, x_{2}, \ldots, x_{N}\right)
$$
be the reflection of $x\in B_{1}(0)$ about the plane $T_{\lambda}$.
To compare the values of $u(x)$ with $u\left(x^{\lambda}\right)$, we define
$$
w_{\lambda}(x):=u\left(x^{\lambda}\right)-u(x), \quad \forall \,\, x\in\Sigma_{\lambda}.
$$
Our goal is to show that $w_{\lambda}\geq0$ in $\Sigma_{\lambda}$ for any $\lambda\in(-1,0]$.

By equation \eqref{Ball}, it is easy to verify that, for any $\lambda\in(-1,0]$,
\begin{equation}\label{221-0}
  (-\Delta+m^{2})^{s}w_{\lambda}(x)+c_{\lambda}(x)w_{\lambda}(x)=0, \quad \forall \,\, x\in\Sigma_{\lambda}
\end{equation}
with
\begin{equation*}
	c_{\lambda}(x):=\begin{cases} -\frac{f(u(x))-f\left(u(x^{\lambda})\right)}{u(x)-u(x^{\lambda})},\quad \text{if}\,\,u(x)\not=u(x^{\lambda}),\\ \\ 0,\quad \text{if}\,\,u(x)=u(x^{\lambda})\\
	\end{cases}
	\end{equation*}
satisfying
\begin{equation}\label{221-1}
  \|c_{\lambda}\|_{L^{\infty}(\Sigma_{\lambda})}\leq L,
\end{equation}
where $L$ (independent of $\lambda$) is the Lipschitz constant for the function $f(\cdot)$.

We will carry out the moving plane procedure in two steps.

\emph{Step 1.} Start moving the plane $T_{\lambda}$ from near $\lambda=-1$ to the right along $x_{1}$-axis. Observe that $\Sigma_{\lambda}$ is a narrow region for $\lambda$ close enough to $-1$ and
\[w_{\lambda}(x)\geq0, \quad \forall \,\, x\in H_{\lambda}\setminus\Sigma_{\lambda},\]
where $H_{\lambda}:=\{x\in\mathbb{R}^{N} \mid x_{1}<\lambda\}$. Thus \eqref{221-0}, \eqref{221-1} and the narrow region principle Theorem \ref{2NRP} imply that, for $\lambda$ close to $-1$,
\begin{equation}\label{221-aim}
  w_{\lambda}(x)>0, \quad \forall \,\, x\in\Sigma_{\lambda}.
\end{equation}

\emph{Step 2.} Move the plane to continuously to the right until its limiting position. Step 1 provides a start point for us to move planes, next we will continue to move $T_{\lambda}$ to the right as long as \eqref{221-aim} holds.

To this end, let us define
\begin{equation}\label{221-def}
  \lambda_{0}:=\sup\left\{\lambda\in(-1,0] \mid w_{\mu}>0 \,\, \text{in} \,\, \Sigma_{\mu}, \,\, \forall \, -1<\mu<\lambda\right\}.
\end{equation}
We aim to show that $\lambda_{0}=0$ via contradiction arguments.

Suppose on the contrary that $\lambda_{0}<0$, then we will be able to move $T_{\lambda}$ to the right a little bit further while \eqref{221-aim} still holds, which contradicts the definition \eqref{221-def} of $\lambda_{0}$.

Indeed, due to $\lambda_{0}<0$, one can infer from \eqref{Ball} that $w_{\lambda_{0}}>0$ in $\left((B_{1}(0))^{\lambda_{0}}\cap H_{\lambda_{0}}\right)\setminus\Sigma_{\lambda_{0}}$ ($A^{\lambda}$ denotes the reflection of a set $A$ w.r.t. $T_{\lambda}$), and hence the strong maximum principle Lemma \ref{SMP-anti} yields that
\begin{equation}\label{221-2}
  w_{\lambda_{0}}(x)>0, \quad \forall \,\, x\in\Sigma_{\lambda_{0}}.
\end{equation}
Now select $0<\epsilon_{1}<\min\{-\lambda_{0},\lambda_{0}+1\}$ small enough such that $\Sigma_{\lambda_{0}+\epsilon_{1}}\setminus\overline{\Sigma_{\lambda_{0}-\epsilon_{1}}}$ is a narrow region. Since $w_{\lambda_{0}}>0$ in $(B_{1}(0))^{\lambda_{0}}\cap H_{\lambda_{0}}$, so there exists a constant $c>0$ such that
\begin{equation}\label{221-3}
  w_{\lambda_{0}}(x)\geq c>0, \quad \forall \,\, x\in\overline{\Sigma_{\lambda_{0}-\epsilon_{1}}}.
\end{equation}
Due to the continuity of $w_{\lambda}$ w.r.t. $\lambda$, we get, there exists a sufficiently small $0<\epsilon_{2}<\epsilon_{1}$ such that, for any $\lambda\in[\lambda_{0},\lambda_{0}+\epsilon_{2}]$,
\begin{equation}\label{221-4}
  w_{\lambda}(x)>0, \quad \forall \,\, x\in\overline{\Sigma_{\lambda_{0}-\epsilon_{1}}}.
\end{equation}
For any $\lambda\in[\lambda_{0},\lambda_{0}+\epsilon_{2}]$, note that $\Sigma_{\lambda}\setminus\overline{\Sigma_{\lambda_{0}-\epsilon_{1}}}$ is a narrow region, we can deduce from the narrow region principle Theorem \ref{2NRP} that
\begin{equation}\label{221-5}
  w_{\lambda}(x)>0, \quad \forall \,\, x\in\Sigma_{\lambda}\setminus\overline{\Sigma_{\lambda_{0}-\epsilon_{1}}},
\end{equation}
and hence, for any $\lambda\in[\lambda_{0},\lambda_{0}+\epsilon_{2}]$,
\begin{equation}\label{221-6}
  w_{\lambda}(x)>0, \quad \forall \,\, x\in\Sigma_{\lambda}.
\end{equation}
This contradicts with the definition \eqref{221-def} of $\lambda_{0}$. Thus $\lambda_{0}=0$, or more precisely,
\begin{equation}\label{221-7}
  u(-x_{1},x_{2},\cdots,x_{N})\geq u(x_{1},x_{2},\cdots,x_{N}), \quad \forall \,\, x\in\Sigma_{0}.
\end{equation}
Through similar arguments, we can also move the plane $T_{\lambda}$ from near $\lambda=1$ to the left and derive
\begin{equation}\label{221-8}
  u(-x_{1},x_{2},\cdots,x_{N})\leq u(x_{1},x_{2},\cdots,x_{N}), \quad \forall \,\, x\in B_{1}(0) \,\, \text{with} \,\, x_{1}<0.
\end{equation}
Combining \eqref{221-7} with \eqref{221-8}, we derive that $u(x)$ is symmetric about the plane $T_{0}$. Since the $x_{1}$-direction is arbitrarily chosen, $u(x)$ is radially symmetric w.r.t. the origin $0$. The strict monotonicity comes directly from the moving planes procedure. This completes the proof of Theorem \ref{thm-221}.
\end{proof}

\subsubsection{Coercive epigraph $\Omega$}
A domain $\Omega \subseteq \mathbb{R}^N$ is a coercive epigraph if there exists a continuous function $\varphi : \mathbb{R}^{N-1}\rightarrow \mathbb{R}$ satisfying
\begin{equation}\label{M-1}
\lim_{|x'|\rightarrow +\infty}\varphi(x')=+\infty,
\end{equation}
such that $\Omega=\{x=(x',x_N)\in\mathbb{R}^N|x_N>\varphi(x')\}$.

In this setting, we can prove the following monotonicity result via the direct method of moving planes.
\begin{thm}\label{Mono-eg}
	Let $\Omega$ be a coercive epigraph, and let $u\in\mathcal{L}_{s}(\mathbb{R}^{N})\cap C^{1,1}_{loc}(\Omega)\cap C(\overline{\Omega})$ be a solution to
	\begin{equation}\label{M-2}
	\begin{cases} \left(-\Delta+m^{2}\right)^{s} u=f(x,u)\quad \text{in}\,\, \Omega,\\ u>0\quad \text{in}\,\, \Omega,\\
	u=0 \quad \text{in}\,\, \mathbb{R}^N\setminus\Omega,
	\end{cases}
	\end{equation}
	with $f(x,t)$ continuous in $\overline{\Omega}\times \mathbb{R}$, non-decreasing in $x_N$, and locally Lipschitz continuous in $t$, locally uniformly in $x$, in the following sense: for any $M>0$ and any compact set $K \subset \overline{\Omega}$, there exists $C>0$ depending on $M$ and $K$ such that
	\begin{equation}\label{M-3}
	\sup_{x\in K}\frac{|f(x,t)-f(x,\tau)|}{|t-\tau|}\leq C, \quad \forall \,\, t,\,\tau \in [-M, M].
	\end{equation}
	Then $u$ is strictly monotone increasing in $x_N$.
\end{thm}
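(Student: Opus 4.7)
The plan is to apply the direct method of moving planes in the $x_N$-direction. For $\lambda\in\mathbb{R}$, set $T_\lambda:=\{x_N=\lambda\}$, $\Sigma_\lambda:=\{x_N>\lambda\}$, $x^\lambda:=(x_1,\ldots,x_{N-1},2\lambda-x_N)$, and $w_\lambda(x):=u(x^\lambda)-u(x)$, which is anti-symmetric about $T_\lambda$. It is enough to prove $w_\lambda\leq 0$ in $\Sigma_\lambda$ for every $\lambda\in\mathbb{R}$: this gives that $u$ is non-decreasing in $x_N$, and strict monotonicity then follows by applying Lemma~\ref{SMP-anti} to $-w_\lambda$ at any putative interior zero.

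The geometric observation that makes the scheme work in the unbounded epigraph is that, thanks to the coercivity \eqref{M-1} of $\varphi$, the non-trivial set
$$A_\lambda:=\{x\in\Sigma_\lambda\cap\Omega:x^\lambda\in\Omega\}=\{x:\varphi(x')<\lambda,\ \lambda<x_N<2\lambda-\varphi(x')\}$$
is bounded for every finite $\lambda$ (and empty for $\lambda\leq\inf\varphi$). A direct case check shows that for $x\in\Sigma_\lambda\setminus A_\lambda$ one has either $u(x)=0$ or $u(x^\lambda)=0$, so $w_\lambda(x)\leq 0$ holds automatically there. The real work therefore reduces to controlling $w_\lambda$ on the bounded set $A_\lambda$; on such a set $u$ is bounded, and the local Lipschitz hypothesis \eqref{M-3} supplies a uniform constant $C$ for the linearized coefficient appearing when one rewrites $f(x^\lambda,u(x^\lambda))-f(x,u(x))$ using the monotonicity of $f(\cdot,t)$ in $x_N$ together with Lipschitzness in $t$.

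For the start, any $\lambda<\inf\varphi$ gives $A_\lambda=\emptyset$ and hence $w_\lambda\leq 0$ in $\Sigma_\lambda$. Define
$$\lambda_0:=\sup\{\lambda:w_\mu\leq 0\text{ in }\Sigma_\mu\text{ for every }\mu<\lambda\}$$
and assume for contradiction that $\lambda_0<+\infty$. By continuity, $w_{\lambda_0}\leq 0$ in $\Sigma_{\lambda_0}$. The identity $w_{\lambda_0}\equiv 0$ is impossible, because $\Omega$ is unbounded upward while its reflection across $T_{\lambda_0}$ is bounded above in $x_N$ by coercivity; hence at any $x\in\Omega$ with $x_N>2\lambda_0-\inf\varphi$, one has $x^{\lambda_0}\notin\Omega$, which would force $u(x)=u(x^{\lambda_0})=0$, contradicting $x\in\Omega$. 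Next, at any interior zero $x_0\in A_{\lambda_0}$ of $w_{\lambda_0}$, the monotonicity of $f$ in $x_N$ yields
$$(-\Delta+m^2)^s(-w_{\lambda_0})(x_0)=f(x_0,u(x_0))-f(x_0^{\lambda_0},u(x_0))\geq 0,$$
so Lemma~\ref{SMP-anti} applied to $-w_{\lambda_0}$ would force $w_{\lambda_0}\equiv 0$, a contradiction. Consequently $w_{\lambda_0}<0$ strictly throughout $A_{\lambda_0}$.

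To finish, we push the plane past $\lambda_0$. Choose a compact set $K\Subset A_{\lambda_0}$ with $w_{\lambda_0}\leq-c<0$ on $K$; local uniform continuity of $u$ then yields $w_{\lambda_0+\epsilon}\leq-c/2<0$ on $K$ for all small $\epsilon>0$. The complementary set $A_{\lambda_0+\epsilon}\setminus K$ is bounded and, for $K$ chosen appropriately, lies in a thin neighborhood of $\partial A_{\lambda_0}$: its ``top'' piece (near $\{x_N=2\lambda_0-\varphi(x')\}$) has $u(x^{\lambda_0+\epsilon})\to 0$ and is therefore handled by continuity; its ``lateral'' piece (near $\{\varphi(x')=\lambda_0\}$) is automatically contained in a thin slab parallel to $T_{\lambda_0+\epsilon}$, because $A_\lambda$ pinches to that plane there; what remains is a narrow strip of width $O(\epsilon+\delta)$ against $T_{\lambda_0+\epsilon}$, to which the narrow-region principle (Theorem~\ref{2NRP}) applied to $v:=-w_{\lambda_0+\epsilon}$, with linearized coefficient bounded below by $-C$, provides $v\geq 0$. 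This contradicts the definition of $\lambda_0$, so $\lambda_0=+\infty$. The main obstacle is precisely this last decomposition: because $A_{\lambda_0+\epsilon}$ is not literally a slab, the three pieces of $\partial A_{\lambda_0}$ (the reflecting plane, the reflected image of $\partial\Omega$, and the lateral pinching locus) must each be treated separately before the narrow-region principle can be invoked.
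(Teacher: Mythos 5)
Your strategy is the same moving--plane scheme the paper uses, with the same essential ingredients: coercivity of $\varphi$ to make the effective reflection set bounded, monotonicity of $f$ in $x_N$ to linearize, and a compact-plus-narrow-region decomposition closed by the narrow region principle and the strong maximum principle. Your case analysis showing that $w_\lambda\leq 0$ is automatic outside $A_\lambda$, your argument that $w_{\lambda_0}\not\equiv 0$, and your strict-monotonicity conclusion via Lemma~\ref{SMP-anti} are all correct. The one genuine gap is exactly the one you flag: your choice to reflect into the upper half-space $\Sigma_\lambda=\{x_N>\lambda\}$ makes the relevant set $A_\lambda=\{\varphi(x')<\lambda,\ \lambda<x_N<2\lambda-\varphi(x')\}$ lens-shaped, and in the push-past step the remainder $A_{\lambda_0+\epsilon}\setminus K$ is \emph{not} contained between two hyperplanes parallel to $T_{\lambda_0+\epsilon}$: its top piece sits along the graph $\{x_N=2\lambda-\varphi(x')\}$, whose distance to $T_{\lambda_0+\epsilon}$ is $\lambda_0-\varphi(x')+\epsilon$ and hence not uniformly small. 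Theorem~\ref{2NRP} therefore does not apply to $A_{\lambda_0+\epsilon}\setminus K$ as a whole, and the sentence ``handled by continuity'' for the top piece conceals a nontrivial estimate (you would need $u(x)$ bounded below on the top strip by compactness while $u(x^{\lambda_0+\epsilon})\to 0$, and you would need to verify a priori that the negative minimum cannot sit there).

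The paper avoids this entirely by taking $\Sigma_\lambda=\{x_N<\lambda\}$. Then $\Sigma_\lambda\cap\Omega=\{\varphi(x')<x_N<\lambda\}$ is bounded by coercivity, the compact piece $K=\overline{\Sigma_{\lambda_0-\varepsilon_1}}\cap\Omega$ is a lower cap, and the complement $(\Sigma_\lambda\setminus\overline{\Sigma_{\lambda_0-\varepsilon_1}})\cap\Omega$ is literally contained in the slab $\{\lambda_0-\varepsilon_1<x_N<\lambda\}$, so Theorem~\ref{2NRP} applies at once. Since $w_\lambda$ is anti-symmetric about $T_\lambda$, your function $-w_\lambda$ on the upper half is identical to the paper's $w_\lambda$ on the lower half, so switching orientations costs nothing and closes your gap immediately. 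Alternatively you could keep your orientation and estimate the top piece directly (it is doable, but requires the compactness argument outlined above together with an explicit check that the possible negative-minimum set reduces to a genuine slab near $T_{\lambda_0+\epsilon}$); as written, the proposal leaves that step open.
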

\begin{proof}
	Without loss of generality, we assume $$\inf_{x\in\Omega}x_N=0.$$
	For arbitrary $\lambda>0$, let
	$$
	T_{\lambda}:=\left\{x \in \mathbb{R}^{N} | x_{N}=\lambda\right\}
	$$
	be the moving planes,
	\begin{equation}
	\Sigma_{\lambda}:=\left\{x \in \mathbb{R}^{N} | x_{N}<\lambda\right\}
	\end{equation}
	be the region below the plane, and
	$$
	x^{\lambda}:=\left(x_{1}, x_{2}, \ldots, 2 \lambda-x_{N}\right)
	$$
	be the reflection of $x$ about the plane $T_{\lambda}$.
	
	Assume that $u$ is a solution to problem \eqref{M-2}. To compare the values of $u(x)$ with $u_{\lambda}(x):=u\left(x^{\lambda}\right)$, we denote
	$$
	w_{\lambda}(x):=u_{\lambda}(x)-u(x).
	$$
	Since $\Omega$ is a coercive epigraph, $\Sigma_{\lambda}\cap \Omega$ is always bounded for every $\lambda>0$.
	We aim at proving that $w_{\lambda}>0$ in $\Sigma_{\lambda}\cap\Omega$ for every $\lambda>0$, which gives the desired monotonicity.
	
For $\lambda>0$,  since $2\lambda-x_N>x_N$ in $\Sigma_{\lambda}$, we infer from the  monotonicity of $f$ in $x_N$ that
	\begin{align}\label{M-4}
	\left(-\Delta+m^{2}\right)^{s}	w_{\lambda}(x)&=f\left(x^{\lambda},u_{\lambda}(x)\right)-f(x,u(x))\\\nonumber
	&\geq f\left(x,u_{\lambda}(x)\right)-f(x,u(x))\\\nonumber
	&=-c_{\lambda}(x)w_{\lambda}(x),
	\end{align}
	in $\Sigma_{\lambda}\cap \Omega$, with
	\begin{equation*}
	c_{\lambda}(x):=\begin{cases} \frac{f(x,u(x))-f\left(x,u_{\lambda}(x)\right)}{u_{\lambda}(x)-u(x)},\quad \text{if}\,\,u_{\lambda}(x)\not=u(x),\\ \\ 0,\quad \text{if}\,\,u_{\lambda}(x)=u(x).\\
	\end{cases}
	\end{equation*}
	Notice that, thanks to the locally Lipschitz continuity of $f$ and the local boundedness of $u$, for any $\overline{\lambda}>0$, there exists $C>0$ depending on $\overline{\lambda}$ such that $\|c_\lambda\|_{L^{\infty}(\Sigma_{\lambda}\cap\Omega)}\leq C$ for all $0<\lambda\leq\overline{\lambda}$. We also have
	\begin{equation}\label{M-5}
	w_{\lambda}(x)\geq 0 \qquad \text{in}\,\, \Sigma_{\lambda}\setminus \Omega.
	\end{equation}
	
	We will carry out the method of moving planes in two steps.

	\emph{Step 1.} We will first show that, for $\lambda>0$ sufficiently close to $0$,
	\begin{eqnarray}\label{M-6}
	w_{\lambda}>0 \qquad \text{in} \,\, \Sigma_{\lambda}\cap\Omega.
	\end{eqnarray}
	Indeed, when $\lambda$ sufficiently close to $0$, $\Sigma_{\lambda}\cap \Omega$ is a narrow region. By \eqref{M-4} and \eqref{M-5}, \eqref{M-6} follows directly from the narrow region principle Theorem \ref{2NRP}.
	
	\emph{Step 2.} Inequality \eqref{M-6} provides a starting point for us to carry out the moving planes procedure. Now we increase $\lambda$ from close to $0$ to $+\infty$ as long as inequality \eqref{M-6} holds until its limiting position. Define
	\begin{equation}\label{M-7}
	\lambda_{0}:=\sup \left\{\lambda>0 \mid w_{\mu}> 0 \,\, \text{in} \,\, x \in\Sigma_{\mu}\cap\Omega, \,\, \forall \, 0<\mu<\lambda\right\}.
	\end{equation}
	We aim to prove that
	$$
	\lambda_{0}=+\infty.
	$$
	
	Otherwise, suppose on the contrary that $0<\lambda_{0}<+\infty$, we will show that the plane $T_{\lambda_0}$ can be moved upward a little bit more, that is, there exists an $\varepsilon>0$ small enough such that
	\begin{eqnarray}\label{M-8}
	w_{\lambda}>0  \quad \text{in} \,\,\Sigma_{\lambda}\cap\Omega, \qquad \forall \,\, \lambda_{0}\leq\lambda\leq\lambda_{0}+\varepsilon,
	\end{eqnarray}
	which contradicts the definition \eqref{M-7} of $\lambda_{0}$.
	
First, by the definition of $\lambda_{0}$, we have $w_{\lambda_{0}}\geq 0$ in $\Sigma_{\lambda_0}\cap\Omega$. Since $u>0$ in $\Omega$ and $u\equiv 0$ in $\mathbb{R}^N\setminus\Omega$, we have $w_{\lambda_{0}}(x)> 0$ for any $x\in\Omega^{\lambda_{0}}\setminus\Omega$, where the notation $A^{\lambda}$ denotes the reflection of a given set $A$ w.r.t. the plane $T_{\lambda}$. Then, we obtain from Theorem \ref{MP Anti} that actually $w_{\lambda_{0}}>0$ in $\Sigma_{\lambda_0}\cap\Omega$.

Next, we choose $\varepsilon_1>0$ sufficiently small such that $\left(\Sigma_{\lambda_{0}+\varepsilon_1}\setminus\overline{\Sigma_{\lambda_{0}-\varepsilon_1}}\right)\cap \Omega$ is a bounded narrow region. By the fact that $w_{\lambda_{0}}(x)>0$ for $x\in\Omega^{\lambda_{0}}\cap\Sigma_{\lambda_{0}}$ and the continuity of $w_{\lambda_{0}}$, there exists $c_0>0$ such that
$$w_{\lambda_{0}}(x)> c_0,\qquad \forall \,\, x\in\overline{\Sigma_{\lambda_0-\varepsilon_1}}\cap \Omega.$$
Therefore, we can choose $0<\varepsilon_2<\varepsilon_1$ sufficiently small such that
	\begin{equation}\label{M-9}
	w_{\lambda}(x)>\frac{c_0}{2}>0,\qquad \forall \,\, x\in\overline{\Sigma_{\lambda_0-\varepsilon_1}}\cap \Omega,
	\end{equation}
	for every $\lambda_0\leq\lambda\leq\lambda_0+\varepsilon_2$. Since $\left(\Sigma_{\lambda_{0}+\varepsilon_2}\setminus\overline{\Sigma_{\lambda_{0}-\varepsilon_1}}\right)\cap \Omega$ is also a narrow region, we deduce from \eqref{M-5}, \eqref{M-9} and the narrow region principle Theorem \ref{2NRP} that
	\begin{eqnarray*}
	w_{\lambda}>0  \quad \text{in} \,\,\Sigma_{\lambda}\cap\Omega, \qquad \forall \,\, \lambda_{0}\leq\lambda\leq\lambda_{0}+\varepsilon_2,
	\end{eqnarray*}
    which contradicts the definition \eqref{M-7} of $\lambda_{0}$. Thus, we must have $\lambda_0=+\infty$. This completes the proof of Theorem \ref{Mono-eg}.
\end{proof}	
\begin{rem}\label{rem9}
Theorem \ref{Mono-eg} is the inhomogeneous counterpart for the monotonicity results in Theorem 1.3 in Dipierro, Soave and Valdinoci \cite{DSV} for $(-\Delta)^{s}$, Theorem 1.3 in Berestycki, Caffarelli and Nirenberg \cite{BCN2} and Proposition II.1 in Esteban and Lions \cite{EL} for $-\Delta$.
\end{rem}

\subsubsection{Whole space $\mathbb{R}^{N}$}
Consider \emph{generalized static pseudo-relativistic Schr\"odinger equations}:
\begin{equation}\label{sta_poly}
	\lr-\De+m^{2}\rr^{s}u(x)+\omega u(x)=u^p(x), \qquad \forall \,\,x\in\mathbb{R}^{N}.
\end{equation}

We will prove the following symmetry and monotonicity result for nonnegative solution to \eqref{sta_poly} via direct method of moving planes for $(-\Delta+m^{2})^{s}$.
\begin{thm}\label{T:sub_symm}
	Assume that $u \in\mathcal{L}_{s}(\mathbb{R}^{N})\cap C_{loc}^{1,1}(\mathbb{R}^{N})$ is a nonnegative solution of \eqref{sta_poly} with $\omega>-m^{2s}$ and $1<p<+\infty$. If
	\begin{equation}\label{sub_con}
		\limsup_{|x|\rightarrow+\infty}u(x)=l<\left(\frac{\omega+m^{2s}}{p}\right)^{\frac{1}{p-1}},
	\end{equation}
	then $u$ must be radially symmetric and monotone decreasing about some point in $\mathbb{R}^{N}$.
\end{thm}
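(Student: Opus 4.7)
The plan is to apply the direct method of moving planes in an arbitrary direction, which after a rotation we take to be $x_{1}$. Set $T_{\lambda}:=\{x_{1}=\lambda\}$, $\Sigma_{\lambda}:=\{x_{1}<\lambda\}$, let $x^{\lambda}$ denote the reflection of $x$ across $T_{\lambda}$, write $u_{\lambda}(x):=u(x^{\lambda})$ and $w_{\lambda}:=u_{\lambda}-u$. Subtracting the equations for $u_{\lambda}$ and $u$ and using the mean value theorem gives
\[
(-\Delta+m^{2})^{s}w_{\lambda}(x)+c_{\lambda}(x)w_{\lambda}(x)=0 \quad \text{in }\mathbb{R}^{N},
\]
where $c_{\lambda}(x)=\omega-p\,\xi_{\lambda}(x)^{p-1}$ with $\xi_{\lambda}(x)$ between $u(x)$ and $u_{\lambda}(x)$. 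The crucial observation is that at any $x$ with $w_{\lambda}(x)<0$ one has $\xi_{\lambda}(x)\leq u(x)$, so hypothesis \eqref{sub_con} combined with $l<((\omega+m^{2s})/p)^{1/(p-1)}$ yields, uniformly in $\lambda$,
\[
\liminf_{\substack{x\in\Sigma_{\lambda},\,w_{\lambda}(x)<0\\|x|\to\infty}} c_{\lambda}(x)\;\geq\;\omega-p\,l^{p-1}\;>\;-m^{2s},
\]
which is exactly condition \eqref{dati-con} needed to invoke Theorem \ref{dati}.

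To start the procedure, note that $u$ is bounded (by local continuity and \eqref{sub_con}), hence so is $w_{\lambda}$. For $\lambda\leq -R_{0}$, with $R_{0}$ supplied by Theorem \ref{dati} (and independent of $\lambda$), any negative ``almost-minimum'' of $w_{\lambda}$ on $\Sigma_{\lambda}$ must lie inside $B_{R_{0}}(0)$; since $\Sigma_{\lambda}\cap B_{R_{0}}(0)=\emptyset$, we conclude $w_{\lambda}\geq 0$ on $\Sigma_{\lambda}$. Next define $\lambda_{0}:=\sup\{\lambda:w_{\mu}\geq 0\text{ in }\Sigma_{\mu}\text{ for all }\mu\leq\lambda\}$ and claim $\lambda_{0}<+\infty$. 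Indeed, if $\lambda_{0}=+\infty$, then for any fixed $x$, sending $\lambda\to+\infty$ in $u(x)\leq u(x^{\lambda})$ gives $u(x)\leq l$ on $\mathbb{R}^{N}$; setting $f(t):=-\omega t+t^{p}$, one checks $\sup_{0\leq t_{2}<t_{1}\leq l}(f(t_{1})-f(t_{2}))/(t_{1}-t_{2})=-\omega+p\,l^{p-1}<m^{2s}$, and Theorem \ref{Liouville} forces $u\equiv C$ with $f(C)=m^{2s}C$, so either $C=0$ or $C=(\omega+m^{2s})^{1/(p-1)}>l$; the latter contradicts $u\leq l$, while $C=0$ makes the conclusion trivial.

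Assuming $\lambda_{0}<+\infty$, the main task is to show $w_{\lambda_{0}}\equiv 0$ on $\Sigma_{\lambda_{0}}$. By continuity $w_{\lambda_{0}}\geq 0$. If $w_{\lambda_{0}}(\bar x)=0$ at some $\bar x\in\Sigma_{\lambda_{0}}$, then $(-\Delta+m^{2})^{s}w_{\lambda_{0}}(\bar x)=u_{\lambda_{0}}^{p}(\bar x)-u^{p}(\bar x)-\omega w_{\lambda_{0}}(\bar x)=0$, and Lemma \ref{SMP-anti} gives $w_{\lambda_{0}}\equiv 0$, i.e.\ the desired symmetry. Otherwise $w_{\lambda_{0}}>0$ strictly on $\Sigma_{\lambda_{0}}$, and the plane can be slid further: fix $R_{0}$ from Theorem \ref{dati} (uniform for $\lambda$ near $\lambda_{0}$); strict positivity of $w_{\lambda_{0}}$ on the compact set $K:=\overline{\Sigma_{\lambda_{0}-\delta}\cap B_{R_{0}+1}}$ together with continuity in $\lambda$ yields $w_{\lambda_{0}+\epsilon}>0$ on $K$ for sufficiently small $\delta,\epsilon>0$; the remaining thin slab $\{\lambda_{0}-\delta<x_{1}<\lambda_{0}+\epsilon\}\cap B_{R_{0}+1}$ is handled by the narrow region principle Theorem \ref{2NRP}, while the unbounded tail $\Sigma_{\lambda_{0}+\epsilon}\setminus B_{R_{0}+1}$ is handled by Theorem \ref{dati}. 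Thus $w_{\lambda_{0}+\epsilon}\geq 0$ on $\Sigma_{\lambda_{0}+\epsilon}$, contradicting the definition of $\lambda_{0}$. Since the direction was arbitrary, $u$ is symmetric across some hyperplane in every direction; the standard argument identifying a common center then produces $x_{0}\in\mathbb{R}^{N}$ about which $u$ is radially symmetric, and the monotonicity in $|x-x_{0}|$ falls out of the moving-planes procedure.

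The hard part is the third step. Because the Bessel kernel $K_{N/2+s}$ in $(-\Delta+m^{2})^{s}$ destroys scaling and Kelvin invariance, all control at infinity must be extracted from the sharp subthreshold condition $l<((\omega+m^{2s})/p)^{1/(p-1)}$, which is precisely what renders the sign of $c_{\lambda}(x)+m^{2s}$ favorable at infinity and enables Theorem \ref{dati}. Moreover, sliding past the critical plane requires patching together the maximum principles of Theorems \ref{dati} and \ref{2NRP} on a fixed large ball and its unbounded complement, a decomposition that is redundant in the decaying-to-zero setting but essential here, where $u$ is only assumed to have subthreshold $\limsup$ at infinity rather than vanishing.
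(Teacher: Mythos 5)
Your proposal is correct and follows essentially the same route as the paper: subthreshold $\limsup$ at infinity gives the uniform favorable sign of $c_{\lambda}+m^{2s}$ needed for the Decay at infinity (II) Theorem \ref{dati}, which both supplies the starting position and (combined with strict positivity on a large compact set plus the Narrow Region Principle Theorem \ref{2NRP} on the thin slab) drives the plane past the critical position. The one genuine addition is your explicit treatment of the possibility $\lambda_{0}=+\infty$: you observe that it would force $u\leq l$ everywhere, then invoke the Liouville Theorem \ref{Liouville} to force $u\equiv 0$. The paper simply asserts $-R_{0}\leq\lambda_{0}<+\infty$ without argument, so your step fills a small gap. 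Two small points to be fully rigorous in the final patch: when you apply Theorem \ref{2NRP} on the thin slab you only know $w_{\lambda}>0$ on the compact piece $K$ and control on the tail via Theorem \ref{dati}, not $w_{\lambda}\geq 0$ on all of $\Sigma_{\lambda}\setminus\Omega$; you need the weakened form of the hypotheses stated in Remark \ref{rem13}, namely that whenever $\inf_{\Sigma_{\lambda}}w_{\lambda}<0$ the negative minimum is attained in the slab (Remark \ref{rem11} guarantees it is attained in $B_{R_{0}}$). And when quoting Theorem \ref{dati} with your $\lambda$-dependent coefficient $c_{\lambda}$, it is cleaner to use the $\lambda$-independent choice $c(x)=\omega-pu^{p-1}(x)$ as the paper does (valid since $c_{\lambda}\geq c$ at points where $w_{\lambda}<0$), so that the radius $R_{0}$ is manifestly independent of $\lambda$; your remark about uniformity accomplishes the same thing but is slightly less transparent.
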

\begin{proof}
Choose an arbitrary direction to be the $x_{1}$-direction. In order to apply the method of moving planes, we need some notations. For arbitrary $\lambda\in\mathbb{R}$, let
$$
T_{\lambda}:=\left\{x \in \mathbb{R}^{N} | x_{1}=\lambda\right\}
$$
be the moving planes,
\begin{equation}
\Sigma_{\lambda}:=\left\{x \in \mathbb{R}^{N} | x_{1}<\lambda\right\}
\end{equation}
be the region to the left of the plane, and
$$
x^{\lambda}:=\left(2 \lambda-x_{1}, x_{2}, \cdots, x_{N}\right)
$$
be the reflection of $x$ about the plane $T_{\lambda}$.

Assume that $u$ is a nonnegative solution of the pseudo-relativistic Schr\"odinger equations \eqref{sta_poly}. To compare the values of $u(x)$ with $u\left(x^{\lambda}\right)$, we define
$$
w_{\lambda}(x):=u\left(x^{\lambda}\right)-u(x), \quad \forall \,\, x\in\Sigma_{\lambda}.
$$
Then, for any $\lambda\in\mathbb{R}$, at points $x\in\Sigma_{\lambda}$ where $w_{\lambda}(x)<0$, we have
\begin{equation}\label{sub_diff}
(-\De+m^{2})^s w_{\lambda}(x)+c(x)w_{\lambda}(x)\geq 0,
\end{equation}
where $c(x):=\omega-pu^{p-1}(x)$. From the assumption \eqref{sub_con}, we infer that, for any $\lambda\in\mathbb{R}$,
\begin{equation}\label{223-0}
  \liminf_{\substack{x\in\Sigma_{\lambda},\, w_{\lambda}(x)<0 \\ |x|\rightarrow+\infty}}c(x)>-m^{2s}.
\end{equation}

We carry out the moving planes procedure in two steps.

\emph{Step 1.} We use Theorem \ref{dati} (\emph{Decay at infinity (II)}) to show that, for sufficiently negative $\lambda$,
\begin{equation}\label{223-aim}
  w_{\lambda}(x)\geq 0, \quad \forall \,\, x\in\Sigma_{\lambda}.
\end{equation}

In fact, from assumption \eqref{sub_con}, we know that $u$ is bounded from above and hence $w_{\lambda}$ is bounded from below for any $\lambda\in\mathbb{R}$. Suppose that $\inf_{\Sigma_{\lambda}}w_{\lambda}<0$. By \eqref{sub_diff} and \eqref{223-0}, we can deduce from Theorem \ref{dati} (\emph{Decay at infinity (II)}) that, there exist $R_{0}>0$ large and $0<\gamma_{0}<1$ close to $1$ (independent of $\lambda$) such that, if $\hat{x}\in\Sigma_{\lambda}$ satisfying $w_{\lambda}(\hat{x})\leq\gamma_{0}\inf_{\Sigma_{\lambda}}w_{\lambda}<0$, then $|\hat{x}|\leq R_{0}$. This will lead to a contradiction provided that $\lambda\leq-R_{0}$. Thus we have, for any $\lambda\leq-R_{0}$, $w_{\lambda}\geq0$ in $\Sigma_{\lambda}$.

\emph{Step 2.} Step 1 provides a starting point, from which we can now move the plane $T_\la$ to the right as long as \eqref{223-aim} holds to its limiting position.

To this end, let us define
\begin{equation}\label{223-5}
  \lambda_{0}:=\sup\left\{\lambda\in\mathbb{R} \mid w_{\mu}\geq 0 \,\, \text{in} \,\, \Sigma_{\mu}, \,\, \forall \,\mu \leq \lambda\right\}.
\end{equation}
It follows from Step 1 that $-R_{0}\leq\lambda_{0}<+\infty$. One can easily verify that
\begin{equation}\label{223-1}
  w_{\lambda_{0}}(x)\geq0, \quad \forall \,\, x\in\Sigma_{\lambda_{0}}.
\end{equation}

Next, we are to show via contradiction arguments that
\begin{equation}\label{223-2}
  w_{\lambda_{0}}(x) \equiv 0, \quad \forall \,\, x \in \Sigma_{\lambda_{0}}.
\end{equation}
Suppose on the contrary that
\begin{equation}\label{223-11}
  w_{\lambda_{0}}\geq 0 \,\, \text { but } \,\, w_{\lambda_{0}}\not\equiv 0 \quad \text { in } \,\, \Sigma_{\lambda_{0}},
\end{equation}
then we must have
\begin{equation}\label{223-3}
  w_{\lambda_{0}}(x)>0, \quad \forall \,\, x\in\Sigma_{\lambda_{0}}.
\end{equation}
In fact, if \eqref{223-3} is violated, then there exists a point $\hat{x}\in \Sigma_{\lambda_{0}}$ such that
$$
w_{\lambda_{0}}(\hat{x})=\min_{\Sigma_{\lambda_{0}}}w_{\lambda_{0}}=0.
$$
Then it follows from \eqref{sta_poly} that
\begin{equation}\label{223-4}
(-\De+m^{2})^s w_{\lambda_{0}}(\hat{x})=0,
\end{equation}
and hence Lemma \ref{SMP-anti} implies that $w_{\lambda_{0}}\equiv0$ in $\Sigma_{\lambda_{0}}$, which contradicts \eqref{223-11}. Thus $w_{\lambda_{0}}(x)>0$ in $\Sigma_{\lambda_{0}}$.

Then we will show that the plane $T_{\lambda}$ can be moved a little bit further from $T_{\lambda_{0}}$ to the right. More precisely, there exists an $\delta>0$, such that for any $\lambda\in\left[\lambda_{0},\lambda_{0}+\delta\right]$, we have
\begin{equation}\label{223-6}
  w_{\lambda}(x)\geq 0, \quad \forall \,\, x \in \Sigma_{\lambda}.
\end{equation}

In fact, \eqref{223-6} can be achieved by using the \emph{Narrow region principle} Theorem \ref{2NRP} and the \emph{Decay at infinity (II)} Theorem \ref{dati}. First, since $c(x):=\omega-pu^{p-1}(x)$ is uniformly bounded, we can choose $\delta_{1}>0$ small enough such that $\left(\Sigma_{\lambda_{0}+\delta_{1}}\setminus\overline{\Sigma_{\lambda_{0}-\delta_{1}}}\right)\cap B_{R_{\ast}}(0)$ is a narrow region, where $R_{\ast}:=R_{0}+|\lambda_{0}|\geq R_{0}$ with $R_{0}$ given by \emph{Decay at infinity (II)} Theorem \ref{dati}. From \eqref{223-3}, we deduce that, there exists a $c_{0}>0$ such that
\begin{equation}\label{223-7}
  w_{\lambda_{0}}(x)\geq c_{0}>0, \qquad \forall \,\, x\in\overline{\Sigma_{\lambda_{0}-\delta_{1}}\cap B_{R_{\ast}}(0)}.
\end{equation}
As a consequence, due to the continuity of $w_{\lambda}$ w.r.t. $\lambda$, there exists a $0<\delta_{2}<\delta_{1}$ sufficiently small such that, for any $\lambda\in[\lambda_{0},\lambda_{0}+\delta_{2}]$,
\begin{equation}\label{223-8}
  w_{\lambda}(x)>0, \qquad \forall \,\, x\in\overline{\Sigma_{\lambda_{0}-\delta_{1}}\cap B_{R_{\ast}}(0)}.
\end{equation}
For any $\lambda\in[\lambda_{0},\lambda_{0}+\delta_{2}]$, if we suppose that $\inf_{\Sigma_{\lambda}}w_{\lambda}(x)<0$, then the \emph{Decay at infinity (II)} Theorem \ref{dati} implies that
\[w_{\lambda}(x)>\gamma_{0}\inf_{\Sigma_{\lambda}}w_{\lambda}(x), \quad \forall \,\, x\in\Sigma_{\lambda}\setminus\overline{B_{R_{0}}(0)},\]
and hence the negative minimum $\inf_{\Sigma_{\lambda}}w_{\lambda}(x)$ can be attained in $B_{R_{0}}(0)\cap\Sigma_{\lambda}$. Then, from \eqref{223-8}, we infer that, if $\inf_{\Sigma_{\lambda}}w_{\lambda}(x)<0$, then the negative minimum $\inf_{\Sigma_{\lambda}}w_{\lambda}(x)$ can be attained in the narrow region $\left(\Sigma_{\lambda}\setminus\overline{\Sigma_{\lambda_{0}-\delta_{1}}}\right)\cap B_{R_{\ast}}(0)$. Therefore, from Narrow region principle Theorem \ref{2NRP} (see Remark \ref{rem13}), we get, for any $\lambda\in[\lambda_{0},\lambda_{0}+\delta_{2}]$,
\begin{equation}\label{223-9}
  w_{\lambda}(x)>0, \qquad \forall \,\, x\in\left(\Sigma_{\lambda}\setminus\overline{\Sigma_{\lambda_{0}-\delta_{1}}}\right)\cap B_{R_{\ast}}(0),
\end{equation}
and hence
\begin{equation}\label{223-10}
  w_{\lambda}(x)\geq0, \qquad \forall \,\, x\in\Sigma_{\lambda}.
\end{equation}
Thus \eqref{223-6} holds, which contradicts the definition \eqref{223-5} of $\lambda_{0}$. Hence \eqref{223-2} must be valid.

The arbitrariness of the $x_{1}$-direction leads to the radial symmetry and monotonicity of $u(x)$ about some point $x_{0}\in\mathbb{R}^{N}$. This completes the proof of Theorem \ref{T:sub_symm}.
\end{proof}
\begin{rem}\label{rem15}
If we use \emph{Decay at infinity (I)} Theorem \ref{P:decay} in the proof of Theorem \ref{T:sub_symm}, then we will need the stronger assumption $$\lim_{|x|\rightarrow+\infty}u(x)=l<\left(\frac{\omega+m^{2s}}{p}\right)^{\frac{1}{p-1}}$$
instead of \eqref{sub_con}. One can observe that, by using \emph{Decay at infinity (II)} Theorem \ref{dati} instead of Theorem \ref{P:decay}, the ``limit" can be weaken into ``superior limit" in assumption \eqref{sub_con}.

Similar to \emph{Decay at infinity (II)} Theorem \ref{dati}, the \emph{Decay at infinity} Theorem 3 for $(-\Delta)^{s}$ in Chen, Li and Li \cite{CLL} can also be improved to control the locations of ``almost" negative minimal points. By doing this, the assumption (59) in Theorem 3.3 in \cite{CLL} can be weaken into:
\[\limsup_{|x|\rightarrow+\infty}u(x)=a<\left(\frac{1}{p}\right)^{\frac{1}{p-1}}.\]
We leave the details to interested readers.
\end{rem}

\begin{rem}\label{rem20}
By applying the direct method of moving planes for $(-\Delta+m^{2})^{s}$ developed here, we can also derive symmetry and monotonicity of nonnegative solutions to the following generalized equations involving pseudo-relativistic Schr\"{o}dinger operators:
\begin{equation}\label{gPRSE}
\left(-\Delta+m^{2}\right)^{s}u(x)=f(u(x)), \qquad \forall \,\, x\in\mathbb{R}^{N},
\end{equation}
under assumptions that, either
\begin{equation*}
  u \,\, \text{is bounded from above} \quad \text{and} \quad \sup_{\substack{t_{1},\,t_{2}\in[\inf u,\,\sup u] \\ t_{1}>t_{2}}}\frac{f(t_{1})-f(t_{2})}{t_{1}-t_{2}}<m^{2s},
\end{equation*}
or
\begin{equation*}
  \lim_{|x|\rightarrow+\infty}u(x)=0 \quad \text{and} \quad f'(t)\leq m^{2s}, \,\,\, \forall \,\, t \,\,\text{sufficiently small}.
\end{equation*}
We leave these open problems to interested readers.
\end{rem}

\subsubsection{Generalized stationary boson star equations in $\mathbb{R}^{3}$}
Consider the following physically interesting \emph{3D static pseudo-relativistic Hartree or Choquard equations}:
\begin{equation}\label{RSO-HC}
\left(-\Delta+m^{2}\right)^su+\omega u=\left(\frac{1}{|x|}*u^{2}\right)u \qquad \text{in}\,\,\mathbb{R}^3,
\end{equation}
where $\omega>-m^{2s}$. In the special case $s=\frac{1}{2}$, equation \eqref{RSO-HC} describes pseudo-relativistic boson star (see \cite{FL}).

By applying the direct method of moving planes, we prove the following Theorem on symmetry and monotonicity of ground state solutions to \eqref{RSO-HC}.
\begin{thm}\label{RSO-HC-Symm}
	Assume that $u \in {\mathcal{L}}_{s}(\mathbb{R}^{3})\cap C_{loc}^{1,1}(\mathbb{R}^{3})$ is a nonnegative solution of \eqref{RSO-HC} satisfying
	\begin{equation}\label{HC-0}
    \int_{\mathbb{R}^{3}}\frac{u(x)}{|x|^{2}}dx<+\infty \qquad \text{and} \qquad u(x)=o\left(\frac{1}{|x|}\right) \quad \text{as}\,\,|x| \rightarrow \infty.
	\end{equation}
	
	Then $u$ must be radially symmetric and monotone decreasing about some point in $\mathbb{R}^{3}$.
\end{thm}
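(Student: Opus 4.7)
The plan is to execute a direct method of moving planes in the spirit of Theorem \ref{T:sub_symm}, with the Hartree convolution handled via reflection symmetries of the Newton kernel. Fix an arbitrary direction as the $x_1$-axis and set, for each $\lambda \in \mathbb{R}$, $T_\lambda := \{x_1 = \lambda\}$, $\Sigma_\lambda := \{x_1 < \lambda\}$, $x^\lambda := (2\lambda - x_1, x_2, x_3)$, and $w_\lambda(x) := u(x^\lambda) - u(x)$. Writing $V(x) := (|\cdot|^{-1} \ast u^2)(x)$ and subtracting equation \eqref{RSO-HC} at $x$ and $x^\lambda$, we obtain
\begin{equation*}
(-\Delta+m^2)^s w_\lambda(x) + [\omega - V(x)]\, w_\lambda(x) = [V(x^\lambda) - V(x)]\, u(x^\lambda), \qquad \forall\, x \in \Sigma_\lambda.
\end{equation*}

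The crucial new step is to convert this equation, at a negative minimum point $\hat{x}$ of $w_\lambda$ in $\Sigma_\lambda$, into a pointwise inequality compatible with Theorem \ref{dati}. Using the standard reflection identity
\begin{equation*}
V(\hat{x}^\lambda) - V(\hat{x}) = \int_{\Sigma_\lambda}\Bigl(\tfrac{1}{|\hat{x}^\lambda - y|} - \tfrac{1}{|\hat{x}-y|}\Bigr)\,[u^2(y) - u^2(y^\lambda)]\, dy,
\end{equation*}
in which the kernel factor is negative for $\hat{x}, y \in \Sigma_\lambda$, one splits the integral over $\Sigma_\lambda^- := \{y \in \Sigma_\lambda : w_\lambda(y) < 0\}$ and its complement. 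On the complement both factors of the integrand are non-positive, so that part is nonnegative; on $\Sigma_\lambda^-$ the bound $u^2(y) - u^2(y^\lambda) = -w_\lambda(y)[u(y)+u(y^\lambda)] \leq -2 w_\lambda(y)\, u(y)$, combined with $w_\lambda(y) \geq w_\lambda(\hat{x})$, yields
\begin{equation*}
V(\hat{x}^\lambda) - V(\hat{x}) \geq 2\, w_\lambda(\hat{x})\, J_\lambda(\hat{x}), \qquad J_\lambda(\hat{x}) := \int_{\Sigma_\lambda}\Bigl(\tfrac{1}{|\hat{x}-y|} - \tfrac{1}{|\hat{x}^\lambda-y|}\Bigr) u(y)\, dy \geq 0.
\end{equation*}
Multiplying by $u(\hat{x}^\lambda) \geq 0$ and substituting into the equation produces
\begin{equation*}
(-\Delta+m^2)^s w_\lambda(\hat{x}) + c_\lambda(\hat{x})\, w_\lambda(\hat{x}) \geq 0, \qquad c_\lambda(\hat{x}) := \omega - V(\hat{x}) - 2\, u(\hat{x}^\lambda)\, J_\lambda(\hat{x}).
\end{equation*}

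With this inequality in hand, the moving planes argument proceeds exactly as in the proof of Theorem \ref{T:sub_symm}. In Step 1, Theorem \ref{dati} (Decay at Infinity II) shows that, for sufficiently negative $\lambda$, no negative infimum of $w_\lambda$ can be attained (even approximately) in $\Sigma_\lambda$, hence $w_\lambda \geq 0$ there. In Step 2, set $\lambda_0 := \sup\{\lambda \in \mathbb{R}: w_\mu \geq 0 \text{ in } \Sigma_\mu \ \forall\, \mu \leq \lambda\}$; combining Lemma \ref{SMP-anti}, the Narrow Region Principle (Theorem \ref{2NRP}), and Theorem \ref{dati} exactly as in Theorem \ref{T:sub_symm} shows that $\lambda_0$ cannot be strictly increased without contradiction, forcing $w_{\lambda_0} \equiv 0$ in $\Sigma_{\lambda_0}$. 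The arbitrariness of the direction then yields radial symmetry and strict monotone decrease about some point of $\mathbb{R}^3$.

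The main obstacle is verifying $\liminf_{|\hat{x}|\to\infty,\ w_\lambda(\hat{x})<0} c_\lambda(\hat{x}) > -m^{2s}$, which is exactly the hypothesis of Theorem \ref{dati}. Here the two assumptions in \eqref{HC-0} play complementary roles. First, $u(x) = o(1/|x|)$ forces $V(\hat{x}) \to 0$ as $|\hat{x}| \to \infty$. Second, at any point where $w_\lambda(\hat{x}) < 0$ one has $0 \leq u(\hat{x}^\lambda) < u(\hat{x}) = o(1/|\hat{x}|)$, so $u(\hat{x}^\lambda)$ decays like $o(1/|\hat{x}|)$ along the relevant set. The integrability $\int_{\mathbb{R}^3} u(x)/|x|^2\, dx < \infty$ keeps $J_\lambda(\hat{x})$ from growing too fast, because the kernel cancellation $1/|\hat{x}-y| - 1/|\hat{x}^\lambda-y| \lesssim |\lambda - \hat{x}_1|/|y|^2$ at infinity gives $J_\lambda(\hat{x}) \lesssim |\hat{x}|\cdot \int u/|y|^2\, dy$. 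Combining, $u(\hat{x}^\lambda)\, J_\lambda(\hat{x}) = o(1)$ as $|\hat{x}| \to \infty$ along $\{w_\lambda < 0\}$, and thus $c_\lambda(\hat{x}) \to \omega > -m^{2s}$, closing the decay condition and hence the argument.
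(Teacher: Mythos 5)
Your proposal follows essentially the same route as the paper: the reflection identity for the Newtonian potential, the bound $u^2(y)-u^2(y^\lambda)\leq -2w_\lambda(y)u(y)$ on $\Sigma_\lambda^-$, the resulting pointwise inequality with coefficient $c_\lambda$, verification of its decay from the two hypotheses in \eqref{HC-0}, and the two-step moving-planes scheme. Two technical points deserve attention. First, the inequality $(-\Delta+m^{2})^{s}w_\lambda+c_\lambda w_\lambda\geq 0$ is derived only at an \emph{exact} minimum of $w_\lambda$ in $\Sigma_\lambda^-$, since the derivation uses $w_\lambda(y)\geq w_\lambda(\hat{x})$; Theorem~\ref{dati}, whose proof perturbs $w_\lambda$ by a bump function and therefore needs the inequality at \emph{approximate} minima, does not apply directly here. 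The paper avoids this by observing that $u=o(1/|x|)$ forces $w_\lambda(x)\to 0$ as $|x|\to\infty$, so any negative infimum is attained at a genuine minimum point, and then it uses Decay at Infinity~(I), Theorem~\ref{P:decay}, in the weakened form permitted by Remark~\ref{rem14} (and the associated restriction $\lambda\le 0$ with a separate sweep from the other side). Second, your claimed bound $J_\lambda(\hat{x})\lesssim|\hat{x}|\int u(y)/|y|^2\,dy$ is not uniform as stated, since the kernel cancellation controls $|\hat{x}-y|^{-2}$ rather than $|y|^{-2}$; the paper's estimate \eqref{HC-6} repairs this by a three-region decomposition of the integral (near $\hat{x}$, far from $\hat{x}$ with $|y|$ small, far from $\hat{x}$ with $|y|$ large), and the conclusion $c_\lambda(\hat{x})\to\omega>-m^{2s}$ holds. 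Both points are repairable and do not alter the overall strategy, which coincides with the paper's.
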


\begin{proof}
Choose an arbitrary direction to be the $x_{1}$-direction. In order to apply the direct method of moving planes along $x_{1}$-axis, we need some notations. For any $\lambda\in\mathbb{R}$, let $T_{\lambda}$, $\Sigma_{\lambda}$, $x^{\lambda}$ and $w_{\lambda}$ be defined the same as in the proof of Theorem \ref{T:sub_symm} in subsubsection 2.2.3. Define $u_{\lambda}(x):=u(x^{\lambda})$. Set
$$\Sigma_\lambda^-:=\{x\in\Sigma_\lambda \mid w_\lambda(x)<0\}.$$

Since the assumption \eqref{HC-0} implies that $u$ is bounded, so $w_{\lambda}$ is also bounded. For $x\in \Sigma_\lambda^-$, we infer from \eqref{RSO-HC} that
\begin{align}\label{HC-1}
&\quad\left(-\Delta+m^{2}\right)^{s}	w_{\lambda}(x)+\omega w_{\lambda}(x)\\\nonumber
&=\left(\frac{1}{|x|}*u^{2}\right)(x^\lambda)u_\lambda(x)-\left(\frac{1}{|x|}*u^{2}\right)u(x)\\\nonumber
&=\left(\frac{1}{|x|}*u^{2}\right)(x)w_{\lambda}(x)+\Big(\int_{\mathbb{R}^{3}}\frac{u^{2}(y)}{|x^{\lambda}-y|}dy-\int_{\mathbb{R}^{3}}\frac{u^{2}(y)}{|x-y|}dy\Big)u_{\lambda}(x) \\\nonumber
&=\left(\frac{1}{|x|}*u^{2}\right)(x)w_{\lambda}(x)+ u_{\lambda}(x)\int_{\Sigma_\lambda}\Big(\frac{1}{|x-y|}-\frac{1}{|x^{\lambda}-y|}\Big)\big(u_\lambda^2(y)-u^2(y)\big)dy \\\nonumber
&\geq \left(\frac{1}{|x|}*u^{2}\right)(x)w_{\lambda}(x)+2u(x)\int_{\Sigma_\lambda^-}\Big(\frac{1}{|x-y|}-\frac{1}{|x^{\lambda}-y|}\Big)u(y)w_\lambda(y)dy \\\nonumber
&\geq \int_{\mathbb{R}^3}\frac{u^2(y)}{|x-y|}dy\,w_{\lambda}(x)+2u(x)\inf_{\Sigma_\lambda^-}w_\lambda\int_{\Sigma_\lambda^-}\Big(\frac{1}{|x-y|}-\frac{1}{|x^{\lambda}-y|}\Big)u(y)dy.
\end{align}
Thus, at points $x\in\Sigma_\lambda^-$ where $w_{\lambda}(x)=\inf_{\Sigma_\lambda^-}w_\lambda$, we obtain
\begin{equation}\label{HC-2}
\left(-\Delta+m^{2}\right)^{s}w_{\lambda}(x)+c_\lambda(x)w_\lambda(x)\geq 0,
\end{equation}
where $c_\lambda(x):=\omega-\int_{\mathbb{R}^3}\frac{u^2(y)}{|x-y|}dy-2u(x)\int_{\Sigma_\lambda^-}\Big(\frac{1}{|x-y|}-\frac{1}{|x^{\lambda}-y|}\Big)u(y)dy$.

Next, we will prove that
\begin{equation}\label{HC-3}
\lim_{x\in\Sigma_{\lambda}^{-},\,|x|\rightarrow+\infty} c_\lambda(x)>-m^{2s},
\end{equation}
for any $\lambda\leq0$. Since $\omega>-m^{2s}$, we only need to prove that, for any $\lambda\leq0$,
\begin{equation}\label{HC-4}
  \lim_{x\in\Sigma_{\lambda}^{-},\,|x|\rightarrow+\infty}\left\{\int_{\mathbb{R}^3}\frac{u^2(y)}{|x-y|}dy+2u(x)\int_{\Sigma_\lambda^-}\Big(\frac{1}{|x-y|}-\frac{1}{|x^{\lambda}-y|}\Big)u(y)dy\right\}=0.
\end{equation}

To this end, we first infer from the assumption \eqref{HC-0} that
$$\int_{\mathbb{R}^{3}}\frac{u^{2}(x)}{|x|}dx<+\infty,$$
and for any $\epsilon>0$, there exists $R_{\epsilon}>0$ large enough such that
\[u(x)\leq\frac{\epsilon}{|x|}, \quad \forall \,\, |x|\geq R_{\epsilon}.\]
It is also easy to check that $|x-y|\geq\frac{|x|}{2}$ implies $|x-y|\geq\frac{|y|}{3}$. Consequently, for any $\lambda\leq 0$ and $x\in\Sigma_{\lambda}^{-}$ with $|x|>2R_{\epsilon}$ sufficiently large, by straightforward calculations, we have
\begin{align}\label{HC-5}
&\quad\int_{\mathbb{R}^3}\frac{u^2(y)}{|x-y|}dy\\ \nonumber
&\leq\frac{2}{\sqrt{|x|}}\int_{\substack{|y-x|\geq\frac{|x|}{2} \\
|y|<\sqrt{|x|}}}\frac{u^{2}(y)}{|y|}dy
+3\int_{\substack{|x-y|\geq\frac{|x|}{2} \\ |y|\geq\sqrt{|x|}}}\frac{u^{2}(y)}{|y|}dy
+\frac{4\epsilon^{2}}{|x|^{2}}\int_{|x-y|<\frac{|x|}{2}}\frac{1}{|x-y|}dy\\ \nonumber
&\leq \frac{2}{\sqrt{|x|}}\int_{\mathbb{R}^{3}}\frac{u^{2}(x)}{|x|}dx+3\int_{\substack{|y|\geq\sqrt{|x|}}}\frac{u^{2}(y)}{|y|}dy+C\epsilon^{2} \\ \nonumber
&\leq \frac{C}{\sqrt{|x|}}+o_{|x|}(1)+C\epsilon^{2},
\end{align}
\begin{align}\label{HC-6}
&\quad 2u(x)\int_{\Sigma_\lambda^-}\left(\frac{1}{|x-y|}-\frac{1}{|x^{\lambda}-y|}\right)u(y)dy\\ \nonumber
&\leq 2u(x)\int_{\Sigma_\lambda^-}\frac{|x^{\lambda}-y|-|x-y|}{|x-y|\cdot|x^{\lambda}-y|}u(y)dy \\ \nonumber
&\leq 4|x|u(x)\int_{\mathbb{R}^{3}}\frac{u(y)}{|x-y|^{2}}dy \\ \nonumber
&\leq\frac{16\epsilon}{|x|}\int_{\substack{|y-x|\geq\frac{|x|}{2} \\
|y|<\sqrt{|x|}}}\frac{u(y)}{|y|^{2}}dy
+9\int_{\substack{|x-y|\geq\frac{|x|}{2} \\ |y|\geq\sqrt{|x|}}}\frac{u(y)}{|y|^{2}}dy
+\frac{8\epsilon^{2}}{|x|}\int_{|x-y|<\frac{|x|}{2}}\frac{1}{|x-y|^{2}}dy\\ \nonumber
&\leq\frac{16\epsilon}{|x|}\int_{\mathbb{R}^{3}}\frac{u(x)}{|x|^{2}}dx+9\int_{\substack{|y|\geq\sqrt{|x|}}}\frac{u(y)}{|y|^{2}}dy+C\epsilon^{2} \\ \nonumber
&\leq \frac{C\epsilon}{|x|}+o_{|x|}(1)+C\epsilon^{2}.
\end{align}
From \eqref{HC-5} and \eqref{HC-6}, we deduce that, for arbitrary $\epsilon>0$,
\begin{equation}\label{HC-7}
  \lim_{x\in\Sigma_{\lambda}^{-},\,|x|\rightarrow+\infty}\left\{\int_{\mathbb{R}^3}\frac{u^2(y)}{|x-y|}dy+2u(x)\int_{\Sigma_\lambda^-}\Big(\frac{1}{|x-y|}-\frac{1}{|x^{\lambda}-y|}\Big)u(y)dy\right\}\leq C\epsilon^{2}.
\end{equation}
This indicates \eqref{HC-4} holds and hence \eqref{HC-3} holds for any $\lambda\leq0$. Besides, from \eqref{HC-5} and \eqref{HC-6}, one can also derive that $c_{\lambda}(x)$ is uniformly bounded from below (independent of $\lambda$).

We will carry out the direct method of moving planes in two steps.

\emph{Step 1.} We use Theorem \ref{P:decay} (\emph{Decay at infinity (I)}) to show that, for sufficiently negative $\lambda$,
\begin{equation}\label{HC-aim}
  w_{\lambda}(x)\geq 0, \quad \forall \,\, x\in\Sigma_{\lambda}.
\end{equation}

In fact, from the assumption \eqref{HC-0}, we know that $u$ is bounded from above and
$$\lim_{|x|\rightarrow+\infty}u(x)=0,$$
and hence $w_{\lambda}$ is bounded from below and $\lim_{|x|\rightarrow+\infty}w_{\lambda}(x)=0$ for any $\lambda\in\mathbb{R}$. Now suppose that $\inf_{\Sigma_{\lambda}}w_{\lambda}<0$, then the negative minimal $\inf_{\Sigma_{\lambda}}w_{\lambda}<0$ can be attained in $\Sigma_{\lambda}^{-}$, i.e. there exists $\hat{x}\in\Sigma_{\lambda}^{-}$ such that $w_{\lambda}(\hat{x})=\inf_{\Sigma_{\lambda}}w_{\lambda}<0$. By \eqref{HC-2} and \eqref{HC-3}, we can deduce from Theorem \ref{P:decay} (\emph{Decay at infinity (I)}) (Remark \ref{rem14}) that, there exists $R_{0}>0$ large (independent of $\lambda$) such that $|\hat{x}|\leq R_{0}$. This will lead to a contradiction provided that $\lambda\leq-R_{0}$. Thus we have, for any $\lambda\leq-R_{0}$, $w_{\lambda}\geq0$ in $\Sigma_{\lambda}$.

\emph{Step 2.} Step 1 provides a starting point, from which we can now move the plane $T_\la$ along $x_{1}$-axis to the right as long as \eqref{HC-aim} holds to its limiting position.

To this end, let us define
\begin{equation}\label{HC-def}
  \lambda_{0}:=\sup\left\{\lambda\leq0 \mid w_{\mu}\geq 0 \,\, \text{in} \,\, \Sigma_{\mu}, \,\, \forall \,\mu \leq \lambda\right\}.
\end{equation}
It follows from Step 1 that $-R_{0}\leq\lambda_{0}\leq0$. One can easily verify that
\begin{equation}\label{HC-8}
  w_{\lambda_{0}}(x)\geq0, \quad \forall \,\, x\in\Sigma_{\lambda_{0}}.
\end{equation}

If $\lambda_{0}<0$, being quite similar to \emph{step 2} in the proof of Theorem \ref{T:sub_symm} in subsubsection 2.2.3, we can show via contradiction arguments, \emph{Narrow region principle} Theorem \ref{2NRP} and \emph{Decay at infinity (I)} Theorem \ref{P:decay} (see Remark \ref{rem13} and \ref{rem14}) that
\begin{equation}\label{HC-9}
  w_{\lambda_{0}}(x) \equiv 0, \quad \forall \,\, x \in \Sigma_{\lambda_{0}}.
\end{equation}

If $\lambda_0=0$, we move the plane in the opposite direction along $x_{1}$-direction until the limiting position $\bar{\lambda}_0\geq\lambda_0=0$. Again, if $\bar{\lambda}_0>0$, by using \emph{Narrow region principle} Theorem \ref{2NRP} and \emph{Decay at infinity (I)} Theorem \ref{P:decay} (see Remark \ref{rem13} and \ref{rem14}), we can deduce that $u_{\bar{\lambda}_{0}}\equiv u$ as in \emph{step 2} in the proof of Theorem \ref{T:sub_symm}. If $\bar{\lambda}_{0}=\lambda_{0}=0$, we immediately have $u_0\equiv u$. All in all, $u$ is symmetric w.r.t. the plane $T_{\lambda_0}$ or $T_{\bar{\lambda}_{0}}$. Since the $x_{1}$-direction is arbitrarily chosen, we must have $u$ is radially symmetric and monotone decreasing about some point $x_{0}\in\mathbb{R}^{N}$. This concludes our proof of Theorem \ref{RSO-HC-Symm}.
\end{proof}

\begin{rem}\label{rem2}
By applying the direct method of moving planes for $(-\Delta+m^{2})^{s}$ developed here, under appropriate decay conditions on nonnegative solution $u$ as $|x|\rightarrow+\infty$, we can also derive symmetry and monotonicity of nonnegative solutions to the following generalized pseudo-relativistic Hartree or Choquard equations
\begin{equation}\label{gRSO-HC}
\left(-\Delta+m^{2}\right)^{s}u+\omega u=\left(\frac{1}{|x|^{\gamma}}*u^{p}\right)u^{q} \qquad \text{in} \,\, \mathbb{R}^{N},
\end{equation}
where $\omega>-m^{2s}$. We leave these open problems to interested readers. For more literatures on Hartree or Choquard equations, please refer to \cite{CD,DFHQW,DFQ,DL,DLQ,DQ,DSS,Le1,Le,MS,MZ,W,Wu1} and the references therein.
\end{rem}

\section{Direct sliding methods for $(-\Delta+m^{2})^{s}$}
In this Section, inspired by \cite{CLiu,CW1,CW2,DSV}, we will establish various maximum principles and develop the direct sliding methods for pseudo-relativistic Schr\"{o}dinger operators $(-\Delta+m^{2})^{s}$ (with $s\in(0,1)$ and $m>0$) and apply it to investigate monotonicity and uniqueness of solutions to the following fractional order semi-linear equation
\begin{equation}\label{PDE}
  (-\Delta+m^{2})^s u(x)=f(u(x))
\end{equation}
in different type of regions, including bounded domains $\Omega$, epigraph $D$ and the whole-space $\mathbb{R}^{N}$.

\subsection{Narrow Region Principle}
Similar to the method of moving planes, the \emph{narrow region principle} is a key ingredient in the sliding method and it provides a starting position to slide the domain $\Omega$. Hence, in this subsection, we will establish the narrow region principle for $(-\Delta+m^{2})^{s}$.

First, we can prove the following strong maximum principle.
\begin{lem}(Strong maximum principle)\label{SMP-Ubdd}
Suppose that $u\in\mathcal{L}_{s}(\mathbb{R}^{N})$ and $u\geq0$ in $\mathbb{R}^{N}$. If there exists $x_{0}\in\mathbb{R}^{N}$ such that, $u(x_{0})=0$, $u$ is $C^{1,1}$ near $x_{0}$ and $(-\Delta+m^{2})^{s}u(x_{0})\geq0$, then $u=0$ a.e. in $\mathbb{R}^N$.
\end{lem}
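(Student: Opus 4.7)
The plan is to mimic the proof of Lemma \ref{SMP-anti} but in the simpler setting where there is no reflection structure to exploit, only the global nonnegativity. Since $u\geq 0$ everywhere and $u(x_0)=0$, the point $x_0$ is a global minimum of $u$, so $u(x_0)-u(y)=-u(y)\leq 0$ for every $y\in\mathbb{R}^{N}$. The regularity $u\in C^{1,1}$ near $x_0$ together with $u\in\mathcal{L}_s(\mathbb{R}^{N})$ guarantees that $(-\Delta+m^{2})^{s}u(x_0)$ is well-defined via the principal value integral in \eqref{Definition}.

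The key step is to plug $x_0$ into the integral representation and use the sign. Since $u(x_0)=0$, the mass term $m^{2s}u(x_0)$ drops out, so we get
\begin{equation*}
0\leq (-\Delta+m^{2})^{s}u(x_0)
= -c_{N,s}\,m^{\frac{N}{2}+s}\,P.V.\int_{\mathbb{R}^{N}} \frac{u(y)}{|x_0-y|^{\frac{N}{2}+s}}\,K_{\frac{N}{2}+s}(m|x_0-y|)\,dy.
\end{equation*}
The kernel $\frac{K_{\frac{N}{2}+s}(m|x_0-y|)}{|x_0-y|^{\frac{N}{2}+s}}$ is strictly positive on $\mathbb{R}^{N}\setminus\{x_0\}$ (by the positivity of $K_\nu$ recalled in the introduction), and $u\geq 0$, so the integral on the right is nonnegative and hence the right-hand side is $\leq 0$.

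Combining the two inequalities yields
\begin{equation*}
\int_{\mathbb{R}^{N}} \frac{u(y)}{|x_0-y|^{\frac{N}{2}+s}}\,K_{\frac{N}{2}+s}(m|x_0-y|)\,dy = 0,
\end{equation*}
and since the integrand is a nonnegative function with a strictly positive weight, this forces $u(y)=0$ for almost every $y\in\mathbb{R}^{N}$, which is the conclusion. There is no real obstacle here: the only point needing a line of justification is that the principal value integral converges and admits the stated sign-based manipulation, which follows from the $C^{1,1}$ regularity near $x_0$ (handling the singularity) combined with the membership in $\mathcal{L}_s(\mathbb{R}^{N})$ together with the decay \eqref{Bessel_asy1} of $K_\nu$ at infinity (handling the tail). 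The whole argument is a one-display computation once these ingredients are in place.
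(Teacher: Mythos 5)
Your proof is correct and follows essentially the same route as the paper's: both observe that $x_0$ is a global minimum, drop the mass term since $u(x_0)=0$, and conclude from the single display $0\leq (-\Delta+m^2)^s u(x_0)=-c_{N,s}m^{\frac{N}{2}+s}\,P.V.\int_{\mathbb{R}^N}\frac{u(y)}{|x_0-y|^{\frac{N}{2}+s}}K_{\frac{N}{2}+s}(m|x_0-y|)\,dy\leq 0$ that $u=0$ a.e. The extra remarks about well-posedness of the principal value are harmless and consistent with the paper's hypotheses.
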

\begin{proof}
	Since there exists $x_0\in\mathbb{R}^{N}$ such that $u(x_0)=\min_{x\in\mathbb{R}^N}u(x)=0$, it follows from \eqref{Definition} that
	\begin{align*}
	&0\leq\left(-\Delta+m^{2}\right)^{s}u(x_0)\\\nonumber
	&\quad =c_{N,s} m^{\Ns} P.V. \int_{\mathbb{R}^{N}} \frac{u(x_0)-u(y)}{|x_{0}-y|^{\Ns}} K_{\Ns}(m|x_{0}-y|)dy\\\nonumber
	&\quad =-c_{N,s} m^{\Ns} P.V. \int_{\mathbb{R}^{N}} \frac{u(y)}{|x_{0}-y|^{\Ns}} K_{\Ns}(m|x_{0}-y|)dy\leq0.
	\end{align*}
	Thus we must have $u=0$ a.e. in $\mathbb{R}^{N}$. This finishes the proof of Lemma \ref{SMP-Ubdd}.
\end{proof}

We have the following narrow region principle for $(-\Delta+m^{2})^{s}$.
\begin{thm}[Narrow region principle]\label{thm1}
Let $D$ be a bounded narrow region in $\mathbb{R}^N$ and $d_N(D)$ be the width of $D$ in $x_N$ direction. Suppose that $w\in {{\mathcal{L}}_{s}(\mathbb{R}^N)} \cap C_{loc}^{1,1}(D)$ is lower semi-continuous on $\overline{D}$, and satisfies
\begin{eqnarray}\label{3NRP-1}\left\{\begin{array}{ll}
 (-\Delta+m^{2})^s{w}(x) + c(x){w}(x) \geq 0 &\quad \text{at points} \,\, x\in D \,\, \text{where} \,\, w(x)<0, \\
{w}(x)\geq 0 &\quad \mbox{ in } D^c,
\end{array} \right. \end{eqnarray}
where $c(x)$ is uniformly bounded from below (w.r.t. $d_{N}(D)$) in $\left\{x\in D \mid w(x)<0\right\}$. We assume $D$ is narrow in the sense that, $d_N(D)\leq\frac{r_{0}}{4m}$, and there exists a constant $C_{N,s}>0$ such that
\begin{eqnarray}\label{3NRP-2}
\left(-\inf_{\left\{x\in D \mid w(x)<0\right\}}c(x)-m^{2s}\right)d_N(D)^{2s}<C_{N,s}.
\end{eqnarray}
Then
\begin{eqnarray}\label{3NRP-3}
 w(x) \geqslant 0 \quad \mbox{ in } D.
\end{eqnarray}
Furthermore, assume that
\begin{equation}\label{3NRP-7}
  (-\Delta+m^{2})^s{w}(x)\geq0 \quad \text{at points} \,\, x\in D \,\, \text{where} \,\, w(x)=0,
\end{equation}
then we have
\begin{eqnarray}\label{3NRP-4}
 \mbox{ either } \quad w(x) > 0 \mbox{ in } D \quad \mbox{ or } \quad w(x)=0 \mbox{ a.e. in } \mathbb{R}^N. \end{eqnarray}
\end{thm}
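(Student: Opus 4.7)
The plan is to argue by contradiction, adapting the strategy of Theorem \ref{2NRP} to the present non-anti-symmetric setting. Assume for contradiction that $w$ is negative somewhere in $D$. Since $w$ is lower semi-continuous on $\overline{D}$ and $w\geq 0$ on $D^c$, the negative minimum $\min_{\overline{D}} w < 0$ is attained at some interior point $\bar{x}\in D$. The goal is to derive a quantitative upper bound on $(-\Delta+m^2)^s w(\bar{x})$ which, combined with \eqref{3NRP-2}, contradicts \eqref{3NRP-1}.

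Without loss of generality assume $D\subset\{a\le x_N\le a+d_N(D)\}$ for some $a\in\mathbb{R}$. Introduce the cylinder
\[
D' := \Bigl\{y\in\mathbb{R}^N:\ d_N(D)<y_N-\bar{x}_N<2d_N(D),\ |y'-\bar{x}'|<\frac{r_0}{2m}\Bigr\},
\]
which lies entirely in $D^c$ (so $w(y)\ge 0$ on $D'$) and on which $m|\bar{x}-y|<r_0$, using $d_N(D)\le\frac{r_0}{4m}$. Since $\bar{x}$ is a global minimum, the integrand $[w(\bar{x})-w(y)]K_{\frac{N}{2}+s}(m|\bar{x}-y|)|\bar{x}-y|^{-(\frac{N}{2}+s)}$ is non-positive on all of $\mathbb{R}^N$. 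Restricting the integral to $D'$ and bounding $w(\bar{x})-w(y)\le w(\bar{x})<0$ there yields
\[
(-\Delta+m^2)^s w(\bar{x}) - m^{2s}w(\bar{x}) \,\le\, c_{N,s}\,m^{\frac{N}{2}+s}\,w(\bar{x}) \int_{D'} \frac{K_{\frac{N}{2}+s}(m|\bar{x}-y|)}{|\bar{x}-y|^{\frac{N}{2}+s}}\,dy.
\]
Invoking the near-zero asymptotic \eqref{Bessel_asy0} and the change of variables $\rho=\tau/t$ in cylindrical coordinates exactly as in \eqref{2NRP-2} (where $r_0/(2mt)\ge 1$ throughout the range of $t$), the kernel integral is bounded below by $C_{N,s}/d_N(D)^{2s}$ after absorbing constants. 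Combined with $w(\bar{x})<0$ this gives
\[
(-\Delta+m^2)^s w(\bar{x}) + c(\bar{x})w(\bar{x}) \,\le\, \Bigl[\frac{C_{N,s}}{d_N(D)^{2s}} + m^{2s} + \inf_{\{x\in D\mid w(x)<0\}} c(x)\Bigr] w(\bar{x}),
\]
whose bracket is strictly positive by \eqref{3NRP-2}; since $w(\bar{x})<0$ the right-hand side is strictly negative, contradicting \eqref{3NRP-1}. Hence $w\ge 0$ in $D$.

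For the dichotomy \eqref{3NRP-4}, having established $w\ge 0$ on all of $\mathbb{R}^N$, if there exists $\bar{x}\in D$ with $w(\bar{x})=0$ then \eqref{3NRP-7} gives $(-\Delta+m^2)^s w(\bar{x})\ge 0$ and the strong maximum principle Lemma \ref{SMP-Ubdd} forces $w\equiv 0$ a.e.\ in $\mathbb{R}^N$; otherwise $w>0$ throughout $D$. The main obstacle is the kernel integral lower bound, which is the only genuinely analytic step; however it is precisely the computation carried out in \eqref{2NRP-2}, and it transfers verbatim here because the narrowness of $D$ in the $x_N$-direction is exactly what permits a cylinder $D'$ of the required geometry to sit inside $D^c$. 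All remaining steps are routine bookkeeping.
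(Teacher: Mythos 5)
Your proof is correct and follows essentially the same strategy as the paper's: locate the negative global minimum $\bar{x}\in D$, restrict the nonlocal integral to a region in $D^c$ near $\bar{x}$, and use the small-argument Bessel asymptotic \eqref{Bessel_asy0} to derive a lower bound of order $d_N(D)^{-2s}$ on the kernel integral, which together with \eqref{3NRP-2} contradicts \eqref{3NRP-1}. The only difference is geometric: you integrate over the cylinder $D'$ transplanted from the computation in \eqref{2NRP-2}, whereas the paper integrates over the conical annulus $\mathcal{C}=\{x\in\mathbb{R}^N : 2d_N(D)<|x-\bar{x}|<4d_N(D),\ x_N-(\bar{x})_N>\tfrac{1}{2}|x-\bar{x}|\}\subset D^c$; both regions yield the same $d_N(D)^{-2s}$ bound, differing only in the explicit constant $C_{N,s}$, which the theorem statement does not pin down.
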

\begin{proof}
Suppose \eqref{3NRP-3} is not valid, then the lower semi-continuity of $w$ in $\bar{D}$ implies that there exists a point $\bar{x}\in D$ such that
\begin{eqnarray}\label{3NRP-5}
 w(\bar{x})=\min_{\bar{D}}w(x) <0.
\end{eqnarray}
Since $d_N(D)\leq\frac{r_{0}}{4m}$, by \eqref{3NRP-1} and \eqref{3NRP-2}, we have
\begin{eqnarray}\label{3NRP-6}
 &&(-\Delta+m^{2})^s w(\bar{x})+c(\bar{x})w(\bar{x}) \\
 &=&c_{N,s}m^{\frac{N}{2}+s}P.V.\int_{\mathbb{R}^N}\frac{w(\bar{x})-w(y)}{|\bar{x}-y|^{\frac{N}{2}+s}}K_{\frac{N}{2}+s}(m|\bar{x}-y|)dy
 +m^{2s}w(\bar{x})+c(\bar{x})w(\bar{x})\nonumber\\
 &\leq& c_{N,s}m^{\frac{N}{2}+s}w(\bar{x})\int_{D^c}\frac{K_{\frac{N}{2}+s}(m|\bar{x}-y|)}{|\bar{x}-y|^{\frac{N}{2}+s}}dy+\left(\inf_{\{x\in D \mid w(x)<0\}} c(x)+m^{2s}\right)w(\bar{x})\nonumber\\
 &\leq& \left[c_{0}c_{N,s}\int_{\mathcal{C}}\frac{1}{|\bar{x}-y|^{N+2s}}dy+\left(\inf_{\{x\in D \mid w(x)<0\}} c(x)+m^{2s}\right)\right]w(\bar{x})\nonumber\\
 &\leq& \left[c_{0}c_{N,s}c_{N}\sigma_{N}\int_{2d_{N}(D)}^{4d_{N}(D)}\frac{1}{r^{1+2s}}dr+\left(\inf_{\{x\in D \mid w(x)<0\}} c(x)+m^{2s}\right)\right]w(\bar{x})\nonumber\\
 &\leq& w(\bar{x})\left[\frac{C_{N,s}}{[d_N(D)]^{2s}}+\left(\inf_{\{x\in D \mid w(x)<0\}} c(x)+m^{2s}\right)\right]<0, \nonumber
\end{eqnarray}
where $\mathcal{C}:=\{x\in\mathbb{R}^{N}\,|\,2d_{N}(D)<|x-\bar{x}|<4d_{N}(D),\,x_{1}-(\bar{x})_{1}>\frac{1}{2}|x-\bar{x}|\}\subset D^{c}$.

Inequality \eqref{3NRP-6} contradicts \eqref{3NRP-1}, and we thus conclude that
$$w(x)\geq 0, \qquad \forall \,x \in D.$$

Furthermore, if there exists a point $\tilde{x}\in D$ such that $w(\tilde{x})=0$, then it follows immediately from \eqref{3NRP-7} and Lemma \ref{SMP-Ubdd} that $w=0$ a.e. in $\mathbb{R}^N$. Therefore, we have
$$\mbox{ either } \quad {w}(x) > 0 \mbox{ in } D \quad \mbox{ or } \quad w(x)=0 \mbox{ a.e. in } \mathbb{R}^N.$$
This completes the proof of Theorem \ref{thm1}.
\end{proof}
\begin{rem}\label{rem16}
It is clear from the proof that, in Theorem \ref{thm1}, the assumptions ``$w$ is lower semi-continuous on $\overline{D}$" and ``$w\geq0$ in $D^{c}$" can be weaken into: ``if $w<0$ somewhere in $\mathbb{R}^{N}$, then the negative minimum $\inf_{\mathbb{R}^{N}}w(x)$ can be attained in $D$", the same conclusions are still valid. It can also be seen from the proof that, the assumption ``$(-\De+m^{2})^s w(x)+c(x)w(x)\geq 0$ at points $x\in D$ where $w(x)<0$" can be weaken into: ``$(-\De+m^{2})^s w(x)+c(x)w(x)\geq 0$ at points $x\in D$ where $w(x)=inf_{\mathbb{R}^{N}}w<0$", the same conclusions in Theorems \ref{thm1} are still valid.
\end{rem}

\subsection{Maximum principles in unbounded open sets and immediate applications}
In order to apply the sliding method on unbounded open sets, maximum principles play an important role.

We prove
\begin{thm}(Maximum principle in unbounded open sets)\label{MP-Ubdd}
Let $D$ be an open set in $\mathbb{R}^N$, possibly unbounded and disconnected. Suppose that $u\in \mathcal{L}_{s}(\mathbb{R}^{N})\cap C_{\text {loc}}^{1,1}(D)$ is bounded from above, and satisfies
	\begin{equation}\label{MP2-2}
	\begin{cases} \left(-\Delta+m^{2}\right)^{s} u(x)+c(x)u(x)\leq 0 \quad \text{at points} \,\, x\in D \,\, \text{where} \,\,u(x)>0,\\ u(x)\leq 0, \quad x\in \mathbb{R}^{N}\setminus D,
	\end{cases}
	\end{equation}
where $c(x)$ satisfies
\begin{equation}\label{MP2-assumption}
  \inf_{\{x\in D \mid u(x)>0\}}c(x)>-m^{2s}.
\end{equation}
Then $u\leq 0$ in $D$.

Furthermore, assume that
\begin{equation}\label{MP2-CON}
  (-\Delta+m^{2})^s{u}(x)\leq0 \quad \text{at points} \,\, x\in D \,\, \text{where} \,\, u(x)=0,
\end{equation}
then we have
\begin{eqnarray}\label{3NRP-4}
 \mbox{ either } \quad u(x)<0 \mbox{ in } D \quad \mbox{ or } \quad u(x)=0 \mbox{ a.e. in } \mathbb{R}^N. \end{eqnarray}
\end{thm}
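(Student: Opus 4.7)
I would argue by contradiction following the template of Theorem~\ref{MP_anti_ubdd}, with the anti-symmetry of $w$ replaced by the one-sided sign condition $u\leq 0$ on $D^c$. Assume $M:=\sup_D u>0$ and pick $\{x^k\}\subset D$, $\beta_k\uparrow 1$, with $u(x^k)\geq \beta_k M$; set $\varepsilon_k:=(1-\beta_k)M$. Choose $\psi\in C_c^\infty(\mathbb{R}^N)$ with $\psi(0)=1$ and $\mathrm{supp}\,\psi\subset B_1(0)$, put $d_k:=\tfrac{1}{2}\mathrm{dist}(x^k,D^c)>0$ (positive because $D$ is open and $x^k\in D$), $\psi_k(x):=\psi((x-x^k)/d_k)$, and $u_k:=u+\varepsilon_k\psi_k$. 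Since $\psi_k$ is supported in $B_{d_k}(x^k)\subset D$, and $u\leq M$ on $D$ while $u\leq 0$ on $D^c$, one checks that $u_k(x)\leq M$ for every $x\notin B_{d_k}(x^k)$ whereas $u_k(x^k)\geq M$; the global supremum of $u_k$ is therefore attained at some $\bar x^k\in\overline{B_{d_k}(x^k)}\subset D$, at which $u(\bar x^k)\geq M-\varepsilon_k>0$ for $k$ large, so the differential inequality in \eqref{MP2-2} applies at $\bar x^k$.

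Matching two-sided estimates on $(-\Delta+m^2)^s u_k(\bar x^k)$ are at the heart of the argument. Linearity, the PDE at $\bar x^k$, and Lemma~\ref{lem-MP} give the upper bound
\begin{equation*}
(-\Delta+m^2)^s u_k(\bar x^k)\leq -c(\bar x^k)u(\bar x^k)+\varepsilon_k\bigl(m^{2s}+C/d_k^{2s}\bigr),
\end{equation*}
while the global-max property of $\bar x^k$ makes the nonlocal integrand pointwise nonnegative, so
\begin{equation*}
(-\Delta+m^2)^s u_k(\bar x^k)\geq m^{2s}u_k(\bar x^k)\geq m^{2s}M.
\end{equation*}
Setting $c_\ast:=\inf_{\{x\in D\mid u(x)>0\}}c(x)$, and using $u(\bar x^k)\leq M$, $c(\bar x^k)\geq c_\ast$, together with the hypothesis \eqref{MP2-assumption} that $c_\ast>-m^{2s}$, these two bounds combine into a controlling inequality of the form $(m^{2s}+c_\ast)M\leq \varepsilon_k(m^{2s}+C/d_k^{2s})+o(1)$.

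The proof then closes by the same case split as in Theorem~\ref{MP_anti_ubdd}. If $d_k$ is bounded below along a subsequence, the right-hand side is $O(\varepsilon_k)\to 0$, which immediately contradicts $(m^{2s}+c_\ast)M>0$. If instead $d_k\to 0$, so that $\bar x^k$ approaches $\partial D$, one refines the lower bound by integrating the Bessel kernel against the portion of $D^c$ sitting within distance $O(d_k)$ of $\bar x^k$, on which $u_k\leq 0<u_k(\bar x^k)$; via the near-origin asymptotics \eqref{Bessel_asy0} this would produce an additional $C_{N,s}/d_k^{2s}$ contribution, exactly analogous to \eqref{MP-5'}, absorbing the $C\varepsilon_k/d_k^{2s}$ term on the right and yielding the contradiction through $c_\ast>-m^{2s}$. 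This second subcase is the main obstacle and the precise point at which Theorem~\ref{MP-Ubdd} improves upon the corresponding results of \cite{CLiu,CW2,DSV}: the absence of any exterior geometric condition on $D$ compels one to extract the coercivity entirely from the zeroth-order term $m^{2s}u_k$ and the Bessel kernel asymptotics, rather than from a quantitative exterior-density assumption on $D^c$. Once $u\leq 0$ in $D$ is established, the dichotomy $u<0$ in $D$ or $u\equiv 0$ a.e.\ in $\mathbb{R}^N$ follows by applying the strong maximum principle Lemma~\ref{SMP-Ubdd} to $-u$, which is nonnegative on all of $\mathbb{R}^N$ thanks to $u\leq 0$ on $D^c$, at any interior zero of $u$ in $D$, using the supplementary hypothesis \eqref{MP2-CON}.
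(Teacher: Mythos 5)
Your Case (ii), where $d_k\to 0$, has a genuine gap that is not repaired by the argument you sketch. The refinement you invoke -- integrating the Bessel kernel over ``the portion of $D^c$ sitting within distance $O(d_k)$ of $\bar x^k$'' to harvest a $C_{N,s}/d_k^{2s}$ term, ``exactly analogous to \eqref{MP-5'}'' -- cannot work here, because $D^c$ may have Lebesgue measure zero in any neighbourhood of $\bar x^k$. In the anti-symmetric Theorem~\ref{MP_anti_ubdd} the corresponding refinement \eqref{MP-5'} integrates over the full half-space $\Sigma$, which has plenty of mass near the boundary strip; that structure is supplied by the reflection, not by any density of $D^c$. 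Without an exterior-density assumption (and the whole point of this theorem, as you yourself note, is that none is imposed) there is nothing to integrate against: if, say, $D^c$ is a single hyperplane or a lower-dimensional set, your ``refined lower bound'' vanishes and the $C\varepsilon_k/d_k^{2s}$ on the right is left unabsorbed. Since nothing prevents $d_k^{2s}$ from shrinking faster than $\varepsilon_k$, the contradiction you need does not materialise.

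The root cause is your choice to rescale the bump by $d_k=\tfrac12\mathrm{dist}(x^k,D^c)$. That rescaling is essential in the anti-symmetric version (where the perturbation must itself be anti-symmetric with support confined to one side of $T$) but is both unnecessary and harmful here. The paper instead uses a \emph{fixed-radius} translate $\Gamma_k(x):=\gamma(x-x^k)$ with $\mathrm{supp}\,\gamma\subset B_1(0)$ and $\gamma(0)=1$. Because $\Gamma_k$ is merely a translate of $\gamma$, one has $|(-\Delta+m^2)^s\Gamma_k|\leq C_0$ with $C_0$ \emph{independent of $k$}; there is no $1/d_k^{2s}$ in the error at all. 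One still finds a global maximum point $\bar x^k\in\overline{B_1(x^k)}$ of $u+\epsilon_k\Gamma_k$ with $u(\bar x^k)\geq\alpha_k M>0$; this positivity \emph{forces} $\bar x^k\in D$ (no support-containment argument needed), so the differential inequality applies there. The lower bound $(-\Delta+m^2)^s(u+\epsilon_k\Gamma_k)(\bar x^k)\geq m^{2s}(u+\epsilon_k\Gamma_k)(\bar x^k)$ comes solely from nonnegativity of the nonlocal integrand at a global max. Combining the two bounds gives $m^{2s}\leq -\inf c + 2C_0(1-\alpha_k)$ for $k$ large, and letting $k\to\infty$ contradicts \eqref{MP2-assumption} directly, with no case split on $d_k$. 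Your final paragraph (applying Lemma~\ref{SMP-Ubdd} to $-u$ at an interior zero to get the dichotomy) is correct and matches the paper.
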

\begin{rem}\label{rem-MP-Ubdd}
One can observe that, we allow the function $c(x)$ in Theorem \ref{MP-Ubdd} to be negative, which is different from corresponding assumptions on $c(x)$ in those maximum principles in unbounded open sets for fractional Laplacians $(-\Delta)^{s}$ or fractional $p$-Laplacians $(-\Delta)^{s}_{p}$ ($0<s<1$) (see, e.g. Theorem 1 in \cite{CLiu} and Theorem 2 in \cite{CW2}).
\end{rem}
\begin{proof}
	Suppose on the contrary that there exists one point $x\in D$ such that $u(x)>0$, then we have
	\begin{equation}\label{MP2-3}
	0<M:=\sup_{x\in \mathbb{R}^{N}} u(x) <\infty.
	\end{equation}
	There exists sequences $x^k\in D$ and $0<\alpha_k<1$ with $\alpha_k\rightarrow 1$ as $k\rightarrow \infty$ such that
	\begin{equation}\label{MP2-4}
	u(x^k)\geq \alpha_k M.
	\end{equation}
	Let
	\begin{equation*}
	\gamma(x)=\begin{cases}e^{\frac{|x|^{2}}{|x|^2-1}}, \qquad |x|<1\\ 0, \qquad \qquad\, |x|\geq 1.\end{cases}
	\end{equation*}
	It is well known that $\gamma\in C_0^\infty(\mathbb{R}^N)$, therefore $|\left(-\Delta+m^{2}\right)^{s}\gamma(x)|\leq C_0$ for any $x \in \mathbb{R}^N$. Moreover, $\left(-\Delta+m^{2}\right)^{s}\gamma(x)\sim|x|^{-\frac{N+1}{2}-s}e^{-|x|}$ as $|x|\rightarrow +\infty$.
	
	Define
$$ \Gamma_k(x):=\gamma(x-x^k).$$
We can take $\epsilon_k>0$ such that, for any $x\in \mathbb{R}^N\setminus B_{1}(x^k)$,
	\begin{equation}\label{MP2-5}
	u(x^k)+\epsilon_k\Gamma_{k}(x^k)\geq M\geq u(x)+\epsilon_k\Gamma_{k}(x).
	\end{equation}
	In fact, we only need to let
$$\epsilon_k:=(1-\alpha_k)M.$$
Consequently, there exists $\bar{x}^k\in B_{1}(x^k)$ such that
	\begin{equation}\label{MP2-6}
	u(\bar{x}^k)+\epsilon_k\Gamma_{k}(\bar{x}^k)= \max_{x\in\mathbb{R}^{N}}[u(x)+\epsilon_k\Gamma_{k}(x)]\geq M,
	\end{equation}
	which also implies that
\begin{equation}\label{MP2-7}
  u(\bar{x}^k)\geq u(x^k)+\epsilon_k\Gamma_{k}(x^k)-\epsilon_k\Gamma_{k}(\bar{x}^k)\geq u(x^k)\geq \alpha_{k}M>0.
\end{equation}

	Therefore, we deduce from \eqref{MP2-6} that
	\begin{align}\label{MP2-8}
	&\quad\left(-\Delta+m^{2}\right)^{s}[u+\epsilon_k\Gamma_{k}](\bar{x}^k)\\ \nonumber
&=c_{N,s}m^{\Ns} P.V. \int_{\mathbb{R}^{N}} \frac{[u(\bar{x}^k)+\epsilon_k\Gamma_{k}(\bar{x}^k)]-[u(y)+\epsilon_k\Gamma_{k}(y)]}{|\bar{x}^k-y|^{\Ns}} K_{\Ns}(m|\bar{x}^k-y|) dy\\ \nonumber
&\quad+m^{2s}[u(\bar{x}^k)+\epsilon_k\Gamma_{k}(\bar{x}^k)]\\ \nonumber
	&\geq m^{2s}[u(\bar{x}^k)+\epsilon_k\Gamma_{k}(\bar{x}^k)].
	\end{align}
	
	Next, we will evaluate the upper bound of $\left(-\Delta+m^{2}\right)^{s}[u+\epsilon_k\Gamma_{k}](\bar{x}^k)$.
	
Indeed, since \eqref{MP2-7} implies $u(\bar{x}^{k})>0$ and hence $\bar{x}^{k}\in D$, we conclude from \eqref{MP2-2} and  $|\left(-\Delta+m^{2}\right)^{s}\Gamma_{k}(x)|\leq C_0$ for any $x\in\mathbb{R}^{N}$ that
	\begin{equation}\label{MP2-9}
	\left(-\Delta+m^{2}\right)^{s}[u+\epsilon_k\Gamma_{k}](\bar{x}^k)\leq -c(\bar{x}^{k})u(\bar{x}^k)+C_0\epsilon_{k}.
	\end{equation}
	
	Combining \eqref{MP2-8} and \eqref{MP2-9}, we derive
	\begin{eqnarray}\label{MP2-11}
	&&m^{2s}u(\bar{x}^k)\leq m^{2s}[u(\bar{x}^k)+\epsilon_k\Gamma_{k}(\bar{x}^k)]\leq -c(\bar{x}^{k})u(\bar{x}^k)+C_{0}(1-\alpha_{k})M \\
    \nonumber &&\qquad\qquad\, \leq -\left(\inf_{\{x\in D \mid u(x)>0\}}c(x)\right)u(\bar{x}^k)+C_{0}(1-\alpha_{k})\frac{u(\bar{x}^k)}{\alpha_{k}}.
	\end{eqnarray}
Thus if $k$ is large enough such that $\alpha_{k}>\frac{1}{2}$, then \eqref{MP2-11} yields that
$$m^{2s}\leq-\inf_{\{x\in D \mid u(x)>0\}}c(x)+2C_{0}(1-\alpha_{k}).$$
Due to assumption \eqref{MP2-assumption}, this will lead to a contradiction if we let $k\rightarrow+\infty$.

Furthermore, if there exists a point $\tilde{x}\in D$ such that $u(\tilde{x})=0$, then it follows immediately from \eqref{MP2-CON} and Lemma \ref{SMP-Ubdd} that $u=0$ a.e. in $\mathbb{R}^N$. Therefore, we have
$$\mbox{ either } \quad {u}(x)<0 \mbox{ in } D \quad \mbox{ or } \quad u(x)=0 \mbox{ a.e. in } \mathbb{R}^N.$$
This completes our proof of Theorem \ref{MP-Ubdd}.
\end{proof}
\begin{rem} \label{eg} 
For fractional Laplacians $(-\Delta)^{s}$ ($0<s<1$), Dipierro, Soave and Valdinoci proved in \cite{DSV} Maximum Principles in unbounded open set $D$ by using Silvestre's growth lemma (\cite{S}) under the {\em exterior cone condition} that the complement of $D$ contains an infinite open connected cone $\Sigma$. Subsequently, Chen and Liu \cite{CLiu}, Chen and Wu \cite{CW2} introduced new ideas in the proof and thus significantly weakens the {\em exterior cone condition} to the following condition:
\begin{eqnarray}\label{domain}
 \mathop{\underline{lim}}\limits_{k\rightarrow \infty}\frac{|D^c\cap (B_{2^{k+1}r}(q)\backslash B_{2^kr}(q))|}{|B_{2^{k+1}r}(q)\backslash B_{2^kr}(q)|}=c_{0}>0, \qquad \forall \,\, q\in D
 \end{eqnarray}
for some $c_{0}>0$ and $r>0$. Typical examples of $D$ which satisfy condition (\ref{domain}) but does not satisfy the {\em exterior cone condition} include: stripes, annulus and Archimedean spiral (refer to \cite{CLiu,CW2} for details).

Being essentially different from fractional Laplacians $(-\Delta)^{s}$ ($s\in(0,1)$), since there is a \emph{modified Bessel function of the second kind $K_{\frac{N}{2}+s}$} in the definition of inhomogeneous fractional operators $(-\Delta+m^{2})^{s}$, $(-\Delta+m^{2})^{s}$ ($s\in(0,1)$) \emph{do not possess} any \emph{invariance properties} under \emph{scaling transforms}. In order to circumvent these difficulties, we need to derive upper and lower bounds on $\left(-\Delta+m^{2}\right)^{s}[u+\varepsilon_k\Phi_{k}](\bar{x}^k)$ (see \eqref{MP2-8} and \eqref{MP2-9} in the proof of Theorem \ref{MP-Ubdd}). One should note that, \emph{we do not impose any assumptions on $D$ in our Theorem \ref{MP-Ubdd}, $D\subseteq\mathbb{R}^{N}$ can be any open set (possibly unbounded and disconnected)}.
\end{rem}

We consider the following equation
\begin{equation}\label{MP-11}
\begin{cases}\left(-\Delta+m^{2}\right)^{s} u(x)=f(u(x)), \quad \forall \,\, x\in \Omega,\\ u(x)=\phi(x), \quad \forall \,\, x\in \mathbb{R}^{N}\setminus\Omega.\end{cases}
\end{equation}
As an application of Theorem \ref{MP-Ubdd} and Lemma \ref{SMP-Ubdd}, we can prove the following theorem.
\begin{thm}\label{thm-ub}
Let $u\in C_{\text {loc}}^{1,1}(\Omega)\cap\mathcal{L}_{s}(\mathbb{R}^{N})$ be a solution of \eqref{MP-11} such that $u$ is bounded from above and
$$u(x)=\phi(x)<\mu, \quad \forall \,\, x\in \mathbb{R}^{N}\setminus\Omega.$$
Assume $f$ satisfies $f(t)\leq m^{2s}\mu$ for $t\geq \mu$ and $f(\mu)=m^{2s}\mu$.
	
Then $u<\mu$ in $\Omega$.
\end{thm}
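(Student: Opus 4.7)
The plan is to reduce Theorem \ref{thm-ub} to a direct application of the maximum principle in unbounded open sets (Theorem \ref{MP-Ubdd}) together with its strong-maximum-principle conclusion. The key observation is that a constant $\mu$ is an ``$(-\Delta+m^{2})^{s}$-eigenfunction'' with eigenvalue $m^{2s}$: since the nonlocal integral in \eqref{Definition} annihilates constants, one has $(-\Delta+m^{2})^{s}\mu=m^{2s}\mu$ pointwise. Thus the hypothesis $f(t)\leq m^{2s}\mu$ for $t\geq\mu$ is precisely what is needed to produce a favourable sign after comparing $u$ with $\mu$.

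Concretely, I would set $w(x):=u(x)-\mu$ and verify the hypotheses of Theorem \ref{MP-Ubdd} with $D=\Omega$ and $c(x)\equiv 0$ (which obviously satisfies $\inf c>-m^{2s}$). First, $w$ is bounded from above because $u$ is. Second, since $u=\phi<\mu$ on $\mathbb{R}^{N}\setminus\Omega$, one has $w\leq 0$ outside $\Omega$. Third, at any point $x\in\Omega$ where $w(x)>0$, i.e.\ $u(x)>\mu$, we compute
\begin{equation*}
(-\Delta+m^{2})^{s}w(x)=(-\Delta+m^{2})^{s}u(x)-m^{2s}\mu=f(u(x))-m^{2s}\mu\leq 0,
\end{equation*}
where the last inequality uses the assumption $f(t)\leq m^{2s}\mu$ for $t\geq\mu$. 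Hence the differential inequality \eqref{MP2-2} holds with $c\equiv 0$, and Theorem \ref{MP-Ubdd} yields $w\leq 0$ in $\Omega$, i.e.\ $u\leq\mu$ in $\Omega$.

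To upgrade $u\leq\mu$ to the strict inequality $u<\mu$ in $\Omega$, I would invoke the strong maximum principle part of Theorem \ref{MP-Ubdd}. At any point $x\in\Omega$ where $w(x)=0$, i.e.\ $u(x)=\mu$, the assumption $f(\mu)=m^{2s}\mu$ gives
\begin{equation*}
(-\Delta+m^{2})^{s}w(x)=f(\mu)-m^{2s}\mu=0\leq 0,
\end{equation*}
so condition \eqref{MP2-CON} is satisfied. Therefore either $w<0$ in $\Omega$ (which is what we want) or $w\equiv 0$ a.e.\ in $\mathbb{R}^{N}$. The latter alternative forces $u=\mu$ a.e.\ on $\mathbb{R}^{N}\setminus\Omega$, which contradicts the strict bound $\phi<\mu$ there (unless $\mathbb{R}^{N}\setminus\Omega$ is a null set, in which case the hypothesis on $\phi$ is vacuous and one simply restricts attention to the nontrivial case). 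This rules out the second alternative and completes the proof.

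The main obstacle is essentially bookkeeping rather than technical: one must be careful that $w=u-\mu$ lies in $\mathcal{L}_{s}(\mathbb{R}^{N})\cap C^{1,1}_{\mathrm{loc}}(\Omega)$ so that $(-\Delta+m^{2})^{s}w$ makes sense pointwise and the identity $(-\Delta+m^{2})^{s}w=(-\Delta+m^{2})^{s}u-m^{2s}\mu$ is legitimate. Since $\mu$ is a bounded constant and the exponential decay in the integrand of \eqref{space} absorbs it, $\mu\in\mathcal{L}_{s}(\mathbb{R}^{N})$, so subtraction is harmless. All remaining work is a direct verification of the hypotheses of Theorem \ref{MP-Ubdd}; no moving-planes or sliding argument is required here.
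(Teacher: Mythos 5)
Your proof is correct and follows the paper's argument essentially verbatim: both set $w=u-\mu$, verify the hypotheses of Theorem~\ref{MP-Ubdd} with $c\equiv 0$ (using $f(t)\le m^{2s}\mu$ for $t\ge\mu$ to obtain the differential inequality where $w>0$, and $u=\phi<\mu$ on $\Omega^{c}$ for the exterior condition) to conclude $u\le\mu$, and then pass to the strict inequality via the strong maximum principle. The paper invokes Lemma~\ref{SMP-Ubdd} directly for the last step, while you invoke the second conclusion of Theorem~\ref{MP-Ubdd} (which is proved from that lemma), so this is only a cosmetic difference; your explicit remark that the alternative $w\equiv 0$ a.e.\ is ruled out by $\phi<\mu$ on $\Omega^{c}$ is a point the paper leaves implicit.
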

\begin{proof}
We first show that $u\leq \mu$ in $\Omega$. To this end, define $w(x)=u(x)-\mu$, then $w$ is bounded from above and $w(x)<0$ in $\mathbb{R}^{N}\setminus\Omega$. From equation \eqref{MP-11}, we infer that, at points $x\in\Omega$ where $u(x)>\mu$,
	\begin{equation}
	\left(-\Delta+m^{2}\right)^{s} w(x)=f(u(x))-m^{2s}\mu=f(u(x))-f(\mu)\leq 0.
	\end{equation}
It follows from Theorem \ref{MP-Ubdd} that $w(x)\leq 0$ in $\Omega$. Thus we arrive at $u\leq\mu$ in $\Omega$.

Furthermore, by strong maximum principle Lemma \ref{SMP-Ubdd}, we conclude that $u<\mu$ in $\Omega$. This finishes the proof of Theorem \ref{thm-ub}.
\end{proof}

We can also prove the following narrow region principle in unbounded open sets.
\begin{thm}[Narrow region principle in unbounded open sets]\label{32NRP}
Let $D$ be a open set in $\mathbb{R}^{N}$ (possibly unbounded and disconnected) and $d(D):=\sup_{x\in D}dist(x,D^{c})$ be the width of $D$. Assume that $D$ satisfies
\begin{equation}\label{32NRP-con}
\frac{\big|\big(B_{c_{2}r_{x}}(x)\setminus{B_{r_{x}}(x)}\big)\cap D^{c}\big|}
{|B_{c_{2}r_{x}}(x)\setminus B_{r_{x}}(x)|}\geq c_{1}, \qquad \forall \,\, x\in D
\end{equation}
for some constants $c_{1}>0$ and $c_{2}>1$, where $r_{x}=dist(x,D^{c})\leq d(D)$. Suppose that $w\in {{\mathcal{L}}_{s}(\mathbb{R}^{N})}\cap C_{loc}^{1,1}(D)$ is bounded from below and satisfies
\begin{eqnarray}\label{32NRP-1}\left\{\begin{array}{ll}
 (-\Delta+m^{2})^s{w}(x)+c(x){w}(x) \geq 0 &\quad \text{at points} \,\, x\in D \,\, \text{where} \,\, w(x)<0, \\
{w}(x)\geq 0 &\quad \mbox{ in } D^c,
\end{array} \right. \end{eqnarray}
where $c(x)$ is uniformly bounded from below (w.r.t. $d(D)$) in $\left\{x\in D \mid w(x)<0\right\}$. If we assume that $d(D)\leq\frac{r_{0}}{c_{2}m}$, and either
\begin{equation}\label{32NRP-5}
  \inf_{\{x\in D \mid w(x)<0\}}\,c(x)>-\left\{1+\frac{c_{2}^{2s}C_{N,s}}{r_{0}^{2s}}\left[\frac{1}{c_{2}(1+c_{2})}\right]^{N+2s}\left(c_{2}^{N}-1\right)c_{1}\right\}m^{2s},
\end{equation}
or
\begin{eqnarray}\label{32NRP-2}
-[d(D)]^{2s}\left(\inf_{\{x\in D \mid w(x)<0\}}\, c(x)+m^{2s}\right)<C_{N,s}\left[\frac{1}{c_{2}(1+c_{2})}\right]^{N+2s}\left(c_{2}^{N}-1\right)c_{1}
\end{eqnarray}
for some constant $C_{N,s}>0$. Then
\begin{eqnarray}\label{32NRP-3}
 w(x) \geqslant 0 \quad \mbox{ in } D.
\end{eqnarray}
Furthermore, assume that
\begin{equation}\label{32NRP-7}
  (-\Delta+m^{2})^s{w}(x)\geq0 \quad \text{at points} \,\, x\in D \,\, \text{where} \,\, w(x)=0,
\end{equation}
then we have
\begin{eqnarray}\label{32NRP-4}
 \mbox{ either } \quad w(x) > 0 \mbox{ in } D \quad \mbox{ or } \quad w(x)=0 \mbox{ a.e. in } \mathbb{R}^N. \end{eqnarray}.
\end{thm}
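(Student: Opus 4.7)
My plan is to combine the bump-perturbation localization technique from the proofs of Theorem \ref{MP_anti_ubdd} and Theorem \ref{MP-Ubdd} with the ``annular fatness'' hypothesis \eqref{32NRP-con}, which provides a positive-measure chunk of $D^{c}$ sitting at a controlled distance from every point of $D$; this chunk plays the role previously played by the half-space reflection in furnishing a quantitative negative contribution to the nonlocal integral.

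First, suppose for contradiction that $\inf_{D} w < 0$, set $-M := \inf_{D} w$, and choose $x^{k} \in D$ with $w(x^{k}) \leq -\beta_{k} M$, $\beta_{k} \nearrow 1$. Set $d_{k} := \frac{1}{2}\mathrm{dist}(x^{k}, D^{c})$, take a standard bump $\gamma \in C_{c}^{\infty}(B_{1})$ with $\gamma(0)=1$, $\gamma \geq 0$, and define $\Gamma_{k}(x) := \gamma((x-x^{k})/d_{k})$, $\tilde w_{k} := w - \epsilon_{k}\Gamma_{k}$ with $\epsilon_{k} := (1-\beta_{k})M$. Then $\mathrm{supp}\,\Gamma_{k} \subset B_{d_{k}}(x^{k}) \subset D$, one checks $\tilde w_{k}(x^{k}) \leq -M$ while $\tilde w_{k} = w \geq -M$ on $\mathbb{R}^{N} \setminus B_{d_{k}}(x^{k})$, so $\tilde w_{k}$ attains a global minimum at an interior point $\bar x^{k} \in B_{d_{k}}(x^{k}) \subset D$ with $w(\bar x^{k}) \leq -M + \epsilon_{k} < 0$ for $k$ large, placing us inside the hypothesis of \eqref{32NRP-1}.

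Next I produce upper and lower bounds on $(-\Delta+m^{2})^{s}\tilde w_{k}(\bar x^{k})$. Let $\bar r_{k} := \mathrm{dist}(\bar x^{k}, D^{c}) \leq d(D) \leq r_{0}/(c_{2} m)$ and $A_{k} := B_{c_{2}\bar r_{k}}(\bar x^{k}) \setminus B_{\bar r_{k}}(\bar x^{k})$. Since $\bar x^{k}$ is a global minimizer of $\tilde w_{k}$ the integrand in the nonlocal representation is pointwise $\leq 0$; on $A_{k} \cap D^{c}$ we have $\Gamma_{k} = 0$, hence $\tilde w_{k}(y) = w(y) \geq 0$, which together with $|y - \bar x^{k}| \leq c_{2}\bar r_{k}$, the lower Bessel asymptotic \eqref{Bessel_asy0} (applicable because $mc_{2}\bar r_{k} \leq r_{0}$), and the fatness condition \eqref{32NRP-con} yields, by retaining only this annular contribution,
\begin{equation*}
(-\Delta+m^{2})^{s}\tilde w_{k}(\bar x^{k}) \leq m^{2s}\tilde w_{k}(\bar x^{k}) + \frac{\widehat C_{N,s}\,(c_{2}^{N}-1)\,c_{1}}{c_{2}^{N+2s}\,\bar r_{k}^{2s}}\,w(\bar x^{k})
\end{equation*}
for a structural constant $\widehat C_{N,s}>0$. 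On the other hand, writing $(-\Delta+m^{2})^{s}\tilde w_{k} = (-\Delta+m^{2})^{s}w - \epsilon_{k}(-\Delta+m^{2})^{s}\Gamma_{k}$, invoking \eqref{32NRP-1} at $\bar x^{k}$, and applying Lemma \ref{lem-MP} to $\Gamma_{k}$ at scale $d_{k}$ (noting $d_{k} \asymp \bar r_{k}$, since $|\bar x^{k}-x^{k}|\leq d_{k}$), gives the matching lower bound
\begin{equation*}
(-\Delta+m^{2})^{s}\tilde w_{k}(\bar x^{k}) \geq -c(\bar x^{k})\,w(\bar x^{k}) - \epsilon_{k}\!\left(\frac{C}{\bar r_{k}^{2s}} + m^{2s}\right).
\end{equation*}

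Combining these, using $\tilde w_{k}(\bar x^{k}) \leq w(\bar x^{k}) < 0$ and $-w(\bar x^{k}) \geq \beta_{k} M - \epsilon_{k}$, I arrive at
\begin{equation*}
\left(\frac{\widehat C_{N,s}(c_{2}^{N}-1)c_{1}}{c_{2}^{N+2s}\,\bar r_{k}^{2s}} + m^{2s} + c(\bar x^{k})\right)\bigl(-w(\bar x^{k})\bigr) \leq \epsilon_{k}\!\left(\frac{C}{\bar r_{k}^{2s}} + m^{2s}\right).
\end{equation*}
Under either \eqref{32NRP-5} (controlling $\inf c$ at the reference scale $r_{0}/(c_{2}m)$) or \eqref{32NRP-2} (controlling $\inf c$ at the scale $d(D)$), the coefficient of $-w(\bar x^{k})$ on the left is bounded below by a strictly positive quantity uniform in $k$ (either bounded below when $\bar r_{k}$ stays away from $0$, or diverging like $1/\bar r_{k}^{2s}$ when $\bar r_{k} \to 0$, dominating the right-hand $\epsilon_{k}/\bar r_{k}^{2s}$ thanks to $\epsilon_{k} \to 0$). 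Passing to a subsequence and letting $k \to \infty$ therefore forces $M = 0$, a contradiction, so $w \geq 0$ in $D$. The final dichotomy under \eqref{32NRP-7} then follows at once from Lemma \ref{SMP-Ubdd}, since any interior zero of the nonnegative function $w$ is automatically a global nonnegative minimum with $(-\Delta+m^{2})^{s} w \geq 0$ there.

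The main obstacle I expect is the bookkeeping in the last step: making sure that the bump scale $d_{k}$, the annular scale $\bar r_{k}$, and the two different sub-/super-quadratic regimes are all compatible, so that the Lemma \ref{lem-MP} error $\epsilon_{k}/d_{k}^{2s}$ is genuinely absorbed by the negative annular contribution of order $1/\bar r_{k}^{2s}$. This mirrors the delicate Case (i)/Case (ii) analysis carried out in the proof of Theorem \ref{MP_anti_ubdd}, and the explicit constant $[1/(c_{2}(1+c_{2}))]^{N+2s}(c_{2}^{N}-1)c_{1}$ appearing in \eqref{32NRP-5}–\eqref{32NRP-2} looks engineered precisely for this absorption (likely arising from estimating $|\bar x^{k}-y|$ by $(1+c_{2})\bar r_{k}$ rather than $c_{2}\bar r_{k}$ in a slightly refined version of the annular bound).
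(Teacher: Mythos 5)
Your argument is correct and follows essentially the same route as the paper's proof: a bump-perturbed minimizing sequence, Lemma \ref{lem-MP} for the error term, and the annular fatness hypothesis \eqref{32NRP-con} to extract a quantitatively negative contribution from the $D^c$ part of the nonlocal integral. The only (harmless, slightly sharpening) deviation is that you center the annulus at the global minimizer $\bar{x}^k$ and apply \eqref{32NRP-con} there, whereas the paper keeps the annulus centered at $x^k$ (at scale $d_k = \mathrm{dist}(x^k, D^c)$) and pays a transfer factor of $\bigl[1/(1+c_2)\bigr]^{N+2s}$ when converting $|\bar{x}^k-y|$ estimates into $|x^k-y|$ estimates, which is exactly the provenance of the constant $\bigl[1/(c_2(1+c_2))\bigr]^{N+2s}$ appearing in \eqref{32NRP-5}--\eqref{32NRP-2}.
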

\begin{proof}
Suppose on the contrary, there is some point $x$ such that $w(x)<0$ in $D$, then
\begin{equation}\label{32NRP-6}
-\infty<-L:=\inf_{x\in\mathbb{R}^{N}}w(x)<0.
\end{equation}
There exist sequences $\{x^k\}\subseteq D$ and $\{\theta_k\}\subseteq(0,1)$ with $\lim_{k\rightarrow\infty}\theta_k\rightarrow1$ such that
\begin{equation}\label{32NRP-8}
-L\leq w(x^k)\leq-\theta_k L.
\end{equation}
Let
	\begin{equation}\label{32NRP-9}
	\Psi(x)=\begin{cases}e^{\frac{|x|^{2}}{|x|^2-1}}, \qquad |x|<1\\ 0, \qquad \qquad\, |x|\geq 1.\end{cases}
	\end{equation}
	It is well known that $\Psi\in C_0^\infty(\mathbb{R}^N)$, therefore $|\left(-\Delta+m^{2}\right)^{s}\Psi(x)|\leq C_0$ for any $x \in \mathbb{R}^N$. Moreover, $\left(-\Delta+m^{2}\right)^{s}\Psi(x)$ is monotone decreasing with respect to $|x|$ and $\left(-\Delta+m^{2}\right)^{s}\Psi(x)\sim|x|^{-\frac{N+1}{2}-s}e^{-|x|}$ as $|x|\rightarrow +\infty$.

Define
\begin{equation}\label{32NRP-10}
\Psi_{k}(x):=\Psi\left(\frac{x-x^k}{d_k}\right),
\end{equation}
where $d_{k}:=dist(x^{k},D^{c})$. We can pick $\mu_k:=(1-\theta_{k})L>0$ such that, for any $x\in\mathbb{R}^{N}\setminus{B_{d_k}(x^k)}$,
\begin{equation}\label{32NRP-11}
w(x^k)-\mu_k\Psi_{k}(x^k)\leq -L\leq w(x)-\mu_k\Psi_{k}(x).
\end{equation}
Therefore, there exists $\bar{x}^k\in B_{d_k}(x^k)$ such that
\begin{equation}\label{32NRP-12}
w(\bar{x}^k)-\mu_k\Psi_{k}(\bar{x}^k)=\min_{x\in\mathbb{R}^{N}}[w(x)-\mu_k\Psi_{k}(x)].
\end{equation}
As a consequence, one has
$$w(\bar{x}^k)-\mu_k\Psi_{k}(\bar{x}^k)\leq w(x^k)-\mu_k\Psi_{k}(x^k)\leq-L,$$
and hence
\begin{equation}\label{32NRP-13}
  w(\bar{x}^k)\leq w(x^k)-\mu_k\Psi_{k}(x^k)+\mu_k\Psi_{k}(\bar{x}^k)\leq w(x^k)\leq-\theta_{k}L<0.
\end{equation}

Next, we will estimate the upper bound and lower bound for $(-\Delta+m^{2})^{s}\left[w-\mu_{k}\Psi_{k}\right](\bar{x}^{k})$.

By definition and direct calculations, we have
\begin{equation}\label{32NRP-14}\begin{split}
&\quad (-\Delta+m^{2})^s\left[w-\mu_{k}\Psi_{k}\right](\bar{x}^{k})\\
&=c_{N,s}m^{\frac{N}{2}+s}P.V.\int_{\mathbb{R}^{N}}
\frac{w(\bar{x}^{k})-\mu_{k}\Psi_{k}(\bar{x}^{k})-w(y)+\mu_{k}\Psi_{k}(y)}{|\bar{x}^k-y|^{\frac{N}{2}+s}}K_{\frac{N}{2}+s}(m|\bar{x}^k-y|)dy \\
&\quad +m^{2s}\left(w-\mu_{k}\Psi_{k}\right)(\bar{x}^{k})\\
&\leq c_{N,s}m^{\frac{N}{2}+s}\int_{\mathbb{R}^{N}\setminus{B_{d_k}(x^k)}}\frac{w(\bar{x}^{k})-\mu_{k}\Psi_{k}(\bar{x}^{k})-w(y)}{|\bar{x}^k-y|^{\frac{N}{2}+s}}K_{\frac{N}{2}+s}(m|\bar{x}^k-y|)dy\\
&\quad +m^{2s}\left(w-\mu_{k}\Psi_{k}\right)(\bar{x}^{k}).
\end{split}\end{equation}
From the assumption \eqref{32NRP-con}, note that $d(D)\leq\frac{r_{0}}{c_{2}m}$, we get the following estimate:
\begin{equation}\label{32NRP-15}\begin{split}
&\quad c_{N,s}m^{\frac{N}{2}+s}\int_{\mathbb{R}^{N}\setminus{B_{d_k}(x^k)}}\frac{w(\bar{x}^{k})-\mu_{k}\Psi_{k}(\bar{x}^{k})-w(y)}{|\bar{x}^k-y|^{\frac{N}{2}+s}}K_{\frac{N}{2}+s}(m|\bar{x}^k-y|)dy\\
&\leq c_{N,s}m^{\frac{N}{2}+s}\int_{D^{c}}\frac{K_{\frac{N}{2}+s}(m|\bar{x}^k-y|)}{|\bar{x}^k-y|^{\frac{N}{2}+s}}dy
\left[w(\bar{x}^{k})-\mu_{k}\Psi_{k}(\bar{x}^{k})\right]\\
&\leq c_{N,s}c_{0}\int_{D^{c}\cap\left(B_{c_{2}d_{k}}(x_{k})\setminus B_{d_{k}}(x_{k})\right)}\frac{1}{|\bar{x}^k-y|^{N+2s}}dy
\left[w(\bar{x}^{k})-\mu_{k}\Psi_{k}(\bar{x}^{k})\right]\\
&\leq c_{N,s}c_{0}\left(\frac{1}{1+c_{2}}\right)^{N+2s}\int_{D^{c}\cap\left(B_{c_{2}d_{k}}(x_{k})\setminus B_{d_{k}}(x_{k})\right)}\frac{1}{|x^k-y|^{N+2s}}dy\cdot w_{k}(\bar{x}^{k})\\
&\leq c_{N,s}c_{0}\left[\frac{1}{c_{2}(1+c_{2})}\right]^{N+2s}\frac{|\left(B_{c_{2}d_{k}}(x_{k})\setminus B_{d_{k}}(x_{k})\right)\cap D^{c}|}{d_{k}^{N+2s}}\cdot w_{k}(\bar{x}^{k}) \\
&\leq C_{N,s}\left[\frac{1}{c_{2}(1+c_{2})}\right]^{N+2s}\left(c_{2}^{N}-1\right)\frac{|\left(B_{c_{2}d_{k}}(x_{k})\setminus B_{d_{k}}(x_{k})\right)\cap D^{c}|}{|B_{c_{2}d_{k}}(x_{k})\setminus B_{d_{k}}(x_{k})|}\cdot\frac{w_{k}(\bar{x}^{k})}{d_{k}^{2s}} \\
&\leq C_{N,s}\left[\frac{1}{c_{2}(1+c_{2})}\right]^{N+2s}\frac{\left(c_{2}^{N}-1\right)c_{1}}{d_{k}^{2s}}w_{k}(\bar{x}^{k}),
\end{split}\end{equation}
where we have used the notations
\begin{equation*}
w_{k}(x):=w(x)-\mu_{k}\Psi_{k}(x), \qquad k=1,2,3,\cdots.
\end{equation*}
Consequently, combining \eqref{32NRP-14} with \eqref{32NRP-15} yield the upper bound:
\begin{eqnarray}\label{32NRP-ub}
 && \quad (-\Delta+m^{2})^s\left[w-\mu_{k}\Psi_{k}\right](\bar{x}^{k})\\
 \nonumber &&\leq C_{N,s}\left[\frac{1}{c_{2}(1+c_{2})}\right]^{N+2s}\frac{\left(c_{2}^{N}-1\right)c_{1}}{d_{k}^{2s}}w_{k}(\bar{x}^{k})+m^{2s}w_{k}(\bar{x}^{k}).
\end{eqnarray}

On the other hand, note that $d(D)\leq\frac{r_{0}}{c_{2}m}$, by \eqref{32NRP-1} and Lemma \ref{lem-MP}, we can deduce the following lower bound:
\begin{equation}\label{32NRP-lb}
(-\Delta+m^{2})^s\left[w-\mu_{k}\Psi_{k}\right](\bar{x}^{k})\geq-c(\bar{x}^{k})w(\bar{x}^{k})-\frac{C}{d_{k}^{2s}}\mu_{k}.
\end{equation}

As a consequence of \eqref{32NRP-ub} and \eqref{32NRP-lb}, we arrive at, for any $k\geq1$,
\begin{equation}\label{32NRP-16}
-c(\bar{x}^{k})w(\bar{x}^{k})-\frac{C}{d_{k}^{2s}}\mu_{k}\leq C_{N,s}\left[\frac{1}{c_{2}(1+c_{2})}\right]^{N+2s}\frac{\left(c_{2}^{N}-1\right)c_{1}}{d_{k}^{2s}}w_{k}(\bar{x}^{k})+m^{2s}w_{k}(\bar{x}^{k}).
\end{equation}
Then it follows immediately that
\begin{equation}\label{32NRP-17}
 -\left\{C_{N,s}\left[\frac{1}{c_{2}(1+c_{2})}\right]^{N+2s}\frac{\left(c_{2}^{N}-1\right)c_{1}}{d_{k}^{2s}}+c(\bar{x}^{k})+m^{2s}\right\}w(\bar{x}^{k})
\leq \frac{C}{d_{k}^{2s}}\mu_{k}.
\end{equation}
Note that $\theta_{k}L\leq -w(\bar{x}^{k})\leq L$ and $\mu_{k}=(1-\theta_{k})L$, we infer from \eqref{32NRP-17} that
\begin{equation}\label{32NRP-18}
 C_{N,s}\left[\frac{1}{c_{2}(1+c_{2})}\right]^{N+2s}\frac{\left(c_{2}^{N}-1\right)c_{1}}{d_{k}^{2s}}+\inf_{\{x\in D \mid w(x)<0\}}c(x)+m^{2s}
\leq\frac{C}{d_{k}^{2s}}\frac{1-\theta_{k}}{\theta_{k}}.
\end{equation}
Now we choose $k$ large enough such that $\theta_{k}>\frac{1}{2}$ and
$$1-\theta_k<\frac{C_{N,s}}{2C}\left[\frac{1}{c_{2}(1+c_{2})}\right]^{N+2s}\left(c_{2}^{N}-1\right)c_{1},$$
then we have
\begin{eqnarray}\label{32NRP-c}
  &&\quad C_{N,s}\left[\frac{1}{c_{2}(1+c_{2})}\right]^{N+2s}\left(c_{2}^{N}-1\right)c_{1}-2C(1-\theta_{k}) \\
 \nonumber &&\leq-[d(D)]^{2s}\left(\inf_{\{x\in D \mid w(x)<0\}}c(x)+m^{2s}\right).
\end{eqnarray}

From \eqref{32NRP-c}, we can deduce the following:

\emph{(i).} If
\begin{equation}\label{32NRP-19}
  \inf_{\{x\in D \mid w(x)<0\}}\,c(x)>-\left\{1+\frac{c_{2}^{2s}C_{N,s}}{r_{0}^{2s}}\left[\frac{1}{c_{2}(1+c_{2})}\right]^{N+2s}\left(c_{2}^{N}-1\right)c_{1}\right\}m^{2s},
\end{equation}
since $0<d(D)\leq\frac{r_{0}}{c_{2}m}$, then \eqref{32NRP-c} yields a contradiction if we let $k\rightarrow+\infty$.

\emph{(ii).} If we only assume $c(x)$ is bounded from below in $\{x\in D \mid w(x)<0\}$, then we can also get a contradiction from \eqref{32NRP-c} by taking the limit $k\rightarrow+\infty$, provided that $D$ is narrow enough in the sense that its width $d(D)$ satisfies
\begin{equation}\label{32NRP-20}
  -[d(D)]^{2s}\left(\inf_{\{x\in D \mid w(x)<0\}}\, c(x)+m^{2s}\right)<C_{N,s}\left[\frac{1}{c_{2}(1+c_{2})}\right]^{N+2s}\left(c_{2}^{N}-1\right)c_{1}.
\end{equation}

Furthermore, if there exists a point $\tilde{x}\in D$ such that $w(\tilde{x})=0$, then it follows immediately from \eqref{32NRP-7} and Lemma \ref{SMP-Ubdd} that $w=0$ a.e. in $\mathbb{R}^N$. Therefore, we have
$$\mbox{ either } \quad {w}(x) > 0 \mbox{ in } D \quad \mbox{ or } \quad w(x)=0 \mbox{ a.e. in } \mathbb{R}^N.$$
This completes the proof of Theorem \ref{32NRP}.
\end{proof}
\begin{rem}\label{rem17}
Theorem \ref{32NRP} improves the \emph{Narrow Region Principle} Theorem \ref{thm1} at least in two aspects. First, we allow the open set $D$ to be unbounded or disconnected. Second, $D$ is not required to be narrow in certain direction, and hence more general sets such as ``narrow annulus" and ``narrow Archimedean spirals" ... are admissible in Theorem \ref{32NRP}.
\end{rem}

\subsection{Direct sliding methods and its applications}
By using various maximum principles established in subsections 3.1 and 3.2, we will apply the direct sliding method to investigate monotonicity and uniqueness of solutions to various problems involving the pseudo-relativistic Schr\"{o}dinger operators $(-\Delta+m^{2})^{s}$.

\subsubsection{Bounded domain $\Omega$}
Assume $u\in\mathcal{L}_{s}(\mathbb{R}^{N})\cap C_{loc}^{1,1}(\Omega)\cap C(\overline{\Omega})$ is a solution to equation \eqref{PDE} in $\Omega$. For any $x=(x', {x_N})$ with $x':= ({x_1},...,{x_{N-1}}) \in {\mathbb{R}^{N-1}}$ and $\tau\in \mathbb{R}$, let
$$u^\tau(x):=u(x', x_N+\tau)$$
and
$$w^\tau(x):=u^\tau(x)-u(x).$$
For a bounded region $\Omega$, we define
$$\Omega^\tau:=\Omega-\tau e_N \mbox{ with } e_N =(0, \cdots, 0, 1),$$
which is obtained by sliding $\Omega$ downward $\tau$ units.

It is obvious that when $\tau$ is sufficiently close to the width of $\Omega$ in $x_N$-direction, $\Omega \cap \Omega^\tau$ is a narrow region. Then under some monotonicity conditions on the solution $u$ in the complement of $\Omega$, we will be able to apply the Narrow region principle (Theorem \ref{thm1}) to conclude that
\begin{eqnarray}\label{331-1}
w^\tau(x)\geq 0,\;\; x \in \Omega \cap \Omega^\tau.
\end{eqnarray}

This provides a starting position to slide the domain $\Omega^\tau$. Then in the second step, we slide $\Omega^\tau$ back upward as long as inequality \eqref{331-1} holds to its limiting position. If we can slide the domain all the way to $\tau=0$, then we conclude that the solution is monotone increasing in
$x_N$-direction.

In order to guarantee the above two steps, we need to impose the exterior condition on $u$. Assume
\begin{eqnarray}\label{331-2}
u(x)=\varphi(x) \;\mbox{  in  } \;\Omega^c,
\end{eqnarray}
and the following monotonicity hypothesis:\\
{\textbf{(H):}} For any three points $x=(x', x_N), y=(x', y_N)$ and $z=(x', z_N)$ lying on a segment parallel to the $x_N$-axis, $y_N<x_N<z_N,$ with $y, z \in \Omega^c$, we have
\begin{eqnarray}\label{H-1}
\varphi(y)<u(x)<\varphi(z),\;\; \mbox{ if } x\in \Omega
\end{eqnarray}
and
\begin{eqnarray}\label{H-2}
\varphi(y)\leq \varphi(x)\leq \varphi(z), \;\;\mbox{ if } x\in \Omega^c.
\end{eqnarray}
\begin{rem}
The same monotonicity conditions \eqref{H-1} and \eqref{H-2} were also assumed in \cite{BN3,CW1,CW2}.
\end{rem}

By applying the Narrow region principle (Theorem \ref{thm1}) and employing the sliding method, we derive the monotonicity of solutions for fractional equations \eqref{PDE} in bounded domains $\Omega$.
\begin{thm}\label{thm2}
Let $\Omega\subset\mathbb{R}^{N}$ be a bounded domain. Assume that $u \in {\mathcal{L}_{s}}(\mathbb{R}^N)\cap C_{loc}^{1,1}(\Omega)\cap C(\overline{\Omega})$ is a solution of
\begin{eqnarray}\label{331-PDE}
\left\{\begin{array}{ll}
(-\Delta+m^{2})^s u(x)=f(u(x))  &\mbox{ in } \Omega, \\
u(x)=\varphi(x)  & \mbox{ on } \Omega^c,
\end{array}
\right.\end{eqnarray}
and satisfies the hypothesis (H). The function $f(\cdot)$ is supposed to be Lipschitz continuous. Then $u$ is monotone increasing with respect to $x_N$ in $\Omega$, i.e., for any $\tau>0$,
$$
u(x',x_N+\tau)>u(x',x_N), \qquad \forall \,\, (x', x_N), \, (x', x_N+\tau) \in \Omega.
$$
\end{thm}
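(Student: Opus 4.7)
The plan is to apply the direct sliding method along the $x_N$-direction, using the Narrow region principle (Theorem \ref{thm1}) to initiate the procedure and the strong maximum principle (Lemma \ref{SMP-Ubdd}) to close the argument. Let $L:=\sup\{x_N-y_N\,:\,(x',x_N),(y',y_N)\in\overline{\Omega}\}$ denote the vertical extent of $\Omega$, and for $\tau\in(0,L)$ set $D_\tau:=\Omega\cap\Omega^\tau$, whose $x_N$-width is at most $L-\tau$. Since $u$ solves \eqref{331-PDE} and $f$ is Lipschitz with constant $L_f$, the difference $w^\tau:=u^\tau-u$ satisfies
\begin{equation*}
(-\Delta+m^{2})^{s}w^\tau(x)+c^\tau(x)w^\tau(x)=0\quad\text{in }D_\tau,
\end{equation*}
with $c^\tau(x):=-\frac{f(u^\tau(x))-f(u(x))}{u^\tau(x)-u(x)}$ when $u^\tau(x)\neq u(x)$ and $c^\tau(x)=0$ otherwise, so that $\|c^\tau\|_{L^\infty}\leq L_f$ uniformly in $\tau$. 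Moreover, hypothesis (H) forces $w^\tau\geq 0$ on $\mathbb{R}^N\setminus D_\tau$: if $x\in\Omega\setminus D_\tau$ then $x+\tau e_N\in\Omega^c$ and \eqref{H-1} gives $u^\tau(x)=\varphi(x+\tau e_N)>u(x)$; the cases where $x\in\Omega^c$ are handled analogously by combining \eqref{H-1} and \eqref{H-2} with auxiliary points far above/below $\Omega$ (possible since $\Omega$ is bounded).

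Next, I would initiate the sliding from $\tau$ close to $L$. Choosing $\tau$ so that $L-\tau\leq\frac{r_0}{4m}$ and $(L_f-m^{2s})(L-\tau)^{2s}<C_{N,s}$, Theorem \ref{thm1} yields $w^\tau\geq 0$ in $D_\tau$. Define
\begin{equation*}
\tau_0:=\inf\bigl\{\tau\in(0,L)\,:\,w^\mu\geq 0\text{ in }D_\mu\text{ for all }\mu\in(\tau,L)\bigr\}.
\end{equation*}
By continuity in $\tau$, $w^{\tau_0}\geq 0$ in $D_{\tau_0}$, and the aim is to show $\tau_0=0$. Suppose to the contrary that $\tau_0>0$. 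Since $w^{\tau_0}\geq 0$ on all of $\mathbb{R}^N$, any interior zero $\bar{x}\in D_{\tau_0}$ would satisfy $(-\Delta+m^{2})^{s}w^{\tau_0}(\bar{x})=-c^{\tau_0}(\bar{x})\cdot 0=0$, and Lemma \ref{SMP-Ubdd} would then force $w^{\tau_0}\equiv 0$ a.e.; but this contradicts \eqref{H-1}, since for $x\in\Omega$ close to the top boundary one has $x+\tau_0 e_N\in\Omega^c$ and $w^{\tau_0}(x)=\varphi(x+\tau_0 e_N)-u(x)>0$. Hence $w^{\tau_0}>0$ throughout $D_{\tau_0}$.

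The third step is the continuation. Choose a compact set $K\Subset D_{\tau_0}$ so that the remaining layer $D_{\tau_0}\setminus K$ has $x_N$-width narrow enough to satisfy the hypotheses of Theorem \ref{thm1} (using the uniform bound $\|c^\tau\|_{L^\infty}\leq L_f$). Since $w^{\tau_0}>0$ on the compact set $K$, there exists $c_0>0$ with $w^{\tau_0}\geq c_0$ on $K$, and by uniform continuity of $(\tau,x)\mapsto w^\tau(x)$ we can find $\epsilon>0$ such that for all $\tau\in[\tau_0-\epsilon,\tau_0]$ we have $w^\tau\geq c_0/2$ on $K$ and $D_\tau\setminus K$ remains narrow in the quantitative sense. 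Applying Theorem \ref{thm1} on $D_\tau\setminus K$ (whose complement contains $K$ where $w^\tau\geq c_0/2>0$, and $D_\tau^c$ where $w^\tau\geq 0$) yields $w^\tau\geq 0$ throughout $D_\tau$ for all $\tau\in[\tau_0-\epsilon,\tau_0]$, contradicting the definition of $\tau_0$. Hence $\tau_0=0$. Finally, the strict inequality $u^\tau>u$ in $D_\tau$ follows by a further application of Lemma \ref{SMP-Ubdd}: any zero of $w^\tau$ would force $w^\tau\equiv 0$ a.e., again contradicting \eqref{H-1}. The main obstacle is the continuation step, where one must simultaneously control (i) the uniform narrowness of $D_\tau\setminus K$ as $\tau$ perturbs and (ii) the exterior nonnegativity of $w^\tau$ on the boundary of the strip; both rely crucially on the uniformity of the Lipschitz bound on $c^\tau$ and on the explicit narrow-width threshold built into Theorem \ref{thm1}.
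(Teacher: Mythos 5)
Your argument follows the same route as the paper: initiate the sliding from $\tau$ near the vertical extent of $\Omega$ using the Narrow region principle (Theorem \ref{thm1}), define $\tau_0$ as the critical slide, rule out interior zeros of $w^{\tau_0}$ via the strong maximum principle together with hypothesis (H), and then continue the slide past $\tau_0$ by splitting $D_{\tau_0}$ into a compact core $K$ (where $w^{\tau_0}\geq c_0>0$) and a narrow remainder to which Theorem \ref{thm1} applies, yielding a contradiction. The only cosmetic departure is that where the paper excludes an interior zero of $w^{\tau_\ast}$ by explicitly computing that $(-\Delta+m^2)^s w^{\tau_\ast}(\hat{x})<0$ at such a point, you invoke Lemma \ref{SMP-Ubdd}; the two are logically equivalent, and your closing strict-positivity step likewise matches the paper's final argument.
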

\begin{proof}
For the sake of simplicity, we may assume that $\Omega$ is an ellipsoid or a rectangle. If $\Omega\subset\mathbb{R}^{N}$ is an arbitrary bounded domain, the proof is entirely similar.

For $\tau\geq 0$, $u^\tau(x)=u(x', x_N+\tau)$ is defined and satisfies the same equation \eqref{331-PDE} (as $u$ does in $\Omega$) on the set $\Omega^\tau=\Omega-\tau e_N$. Set
$$D^\tau:=\Omega^\tau \cap \Omega,$$
$$\tau_{0}:=\sup\{\tau \mid \tau>0, D^\tau\neq \emptyset\},$$
and
$$
w^\tau(x):=u^\tau(x)-u(x), \quad x \in D^\tau.
$$
Then $w^\tau$ satisfies
\begin{eqnarray}\label{331-3}
 (-\Delta+m^{2})^s{w^\tau}(x)=c^\tau(x){w^\tau}(x) \quad \mbox{ in } D^\tau,
\end{eqnarray}
where
\begin{equation}\label{331-5}
	c^{\tau}(x):=\begin{cases} \frac{f(x,u^{\tau}(x))-f\left(x,u(x)\right)}{u^{\tau}(x)-u(x)}, \quad \text{if}\,u^{\tau}(x)\not=u(x),\\ \\ 0,\qquad \text{if}\,u^{\tau}(x)=u(x)\\
	\end{cases}
\end{equation}
is a $L^\infty$ function satisfying
\begin{equation}\label{331-6}
  c^\tau(x)\leq C, \quad \forall \, x \in D^\tau.
\end{equation}

Our goal is to show that
\begin{eqnarray}\label{331-conclusion}
 w^\tau(x)>0 \quad \text{in} \,\, D^\tau, \qquad \forall \,\, 0<\tau<\tau_{0},
\end{eqnarray}
which indicates immediately that $u$ is strictly increasing in the $x_N$ direction.

\emph{Step 1.} We will first show that, for $\tau$ sufficiently close to $\tau_{0}$,
\begin{eqnarray}\label{331-4}
 w^\tau(x) \geq 0 \qquad \text{in} \,\, D^{\tau}.
\end{eqnarray}
Indeed, when $\tau$ sufficiently close to $\tau_{0}$, $D^{\tau}$ is a narrow region. By \eqref{331-3}, \eqref{331-6} and hypothesis (H), \eqref{331-4} follows directly from the Narrow region principle Theorem \ref{thm1}.

\emph{Step 2.} Inequality \eqref{331-4} provides a starting point for us to carry out the sliding procedure. Now we decrease $\tau$ from close to $\tau_{0}$ to $0$ as long as inequality \eqref{331-4} holds until its limiting position. Define
\begin{equation}\label{331-7}
  \tau_{\ast}:=\inf \left\{\tau\in(0,\tau_{0}) \,\mid\, w^{\mu}(x)\geq 0 \,\,\text{in}\, D^{\mu}, \,\forall\,\tau\leq\mu<\tau_{0}\right\}.
\end{equation}
We aim to prove that
$$
 \tau_{\ast}=0.
$$

Otherwise, suppose on the contrary that $\tau_{\ast}>0$, we will show that the domain $\Omega^\tau$ can be slid upward a little bit more, that is, there exists an $0<\varepsilon<\tau_{\ast}$ small enough such that
\begin{eqnarray}\label{331-8}
w^\tau(x)\geq 0 \quad \text{in} \,\, D^\tau, \qquad \forall \,\, \tau_{\ast}-\varepsilon\leq\tau\leq\tau_{\ast},
\end{eqnarray}
which contradicts the definition \eqref{331-7} of $\tau_{\ast}$.

Since by definition of $\tau_{\ast}$,
$$w^{\tau_{\ast}}(x)\geq 0, \;\;\; \forall \,\, x \in D^{\tau_{\ast}},$$
and by the hypothesis (H),
$$w^{\tau_{\ast}}(x)>0, \;\;\; \forall \,\, x \in \Omega \cap \partial D^{\tau_{\ast}},$$
then it follows that
$$w^{\tau_{\ast}}(x) \not\equiv 0 \qquad \text{in} \,\, D^{\tau_{\ast}}.$$

If there exists a point $\hat{x}\in D^{\tau_{\ast}}$ such that $w^{\tau_{\ast}}(\hat{x})=0$, then $\hat{x}$ is the minimum point for $w^{\tau_{\ast}}$ in $\mathbb{R}^{N}$ and
$$
(-\Delta+m^{2})^{s}{w}^{\tau_{\ast}}(\hat{x})=c_{N,s}m^{\frac{N}{2}+s}P.V.\int_{\mathbb{R}^{N}}\frac{-{w}^{\tau_{\ast}}(y)}{|\hat{x}-y|^{\frac{N}{2}+s}}
K_{\frac{N}{2}+s}(m|\hat{x}-y|)dy<0.
$$
This contradicts
$$
(-\Delta+m^{2})^{s}w^{\tau_{\ast}}(\hat{x})=f(u^{\tau_{\ast}}(\hat{x}))-f(u(\hat{x}))=0.
$$
Therefore, we infer that
\begin{eqnarray}\label{331-9}
w^{\tau_{\ast}}(x)>0, \quad \forall \,\, x\in D^{\tau_{\ast}}.
\end{eqnarray}

Now we choose a closed set $K\subset D^{\tau_{\ast}}$ such that $D^{\tau_{\ast}}\backslash K$ is a narrow region. By \eqref{331-9}, there exists a $C_{0}>0$ such that
\begin{equation}\label{331-10}
  w^{\tau_{\ast}}(x)\geq C_0>0 \quad \mbox{ in } K.
\end{equation}
By the continuity of $w^\tau$ with respect to $\tau$, we have, there is an $\varepsilon\in(0,\tau_{\ast})$ sufficiently small, such that, for any $0<\epsilon\leq\varepsilon$,
\begin{equation}\label{331-11}
  w^{{\tau_{\ast}}-\epsilon}(x)\geq 0 \quad \mbox{ in } K.
\end{equation}
Moreover, $D^{\tau_{\ast}-\epsilon}\backslash K$ is still a narrow region. In addition, we obtain from the hypothesis (H) that
\begin{equation}\label{331-12}
  w^{{\tau_{\ast}}-\epsilon}(x) \geq 0 \quad \mbox{ in } (D^{\tau_{\ast}-\epsilon})^c.
\end{equation}
Note that $(D^{{\tau_{\ast}}-\epsilon}\backslash K)^c=K\cup (D^{\tau_{\ast}-\epsilon})^c$, then from \eqref{331-11} and \eqref{331-12} we get
\begin{eqnarray}\label{eqww}
\left\{
\begin{array}{ll}
(-\Delta+m^{2})^s{w^{{\tau_{\ast}}-\epsilon}}(x)-c^{{\tau_{\ast}}-\epsilon}(x){w^{{\tau_{\ast}}-\epsilon}}(x)=0, & x \in D^{{\tau_{\ast}}-\epsilon}\backslash K,\\
{w^{{\tau_{\ast}}-\epsilon}}(x)\geq 0, & x \in  (D^{{\tau_{\ast}}-\epsilon}\backslash K)^c.
\end{array}
\right.
\end{eqnarray}
Then, it follows from the Narrow region principle Theorem \ref{thm1} (more precisely, the proof of Theorem \ref{thm1}) that \eqref{331-8} holds. As a consequence, \eqref{331-8} leads to a contradiction with the definition \eqref{331-7} of $\tau_{\ast}$, thus we must have $\tau_{\ast}=0$ and hence
\begin{eqnarray}\label{331-13}
 w^\tau (x)\geq 0 \quad \text{in} \,\, D^\tau, \quad \forall \; 0<\tau<\tau_{0}.
 \end{eqnarray}

It follows immediately from the hypothesis (H) that
$$w^{\tau}(x) \not\equiv 0 \quad \text{in} \,\, D^{\tau}, \quad \forall \; 0<\tau<\tau_{0}.$$
Thus if there exists a point $x^{\tau}$ such that $w^{\tau}(x^{\tau})=0$, then $x^{\tau}$ is the minimum point and
$$
(-\Delta+m^{2})^{s}w^{\tau}(x^{\tau})=c_{N,s}m^{\frac{N}{2}+s}P.V.\int_{\mathbb{R}^N}\frac{-w^{\tau}(y)}{|x^{\tau}-y|^{\frac{N}{2}+s}}
K_{\frac{N}{2}+s}(m|x^{\tau}-y|)dy<0.
$$
This contradicts $$
(-\Delta+m^{2})^{s}w^{\tau}(x^{\tau})=f(u^{\tau}(x^{\tau}))-f(u(x^{\tau}))=0.
$$
Hence, we arrived at \eqref{331-conclusion}, which implies that $u$ is strictly increasing in the $x_N$ direction. This completes the proof of Theorem \ref{thm2}.
\end{proof}

\subsubsection{Epigraph $D$}
Let the epigraph
$$D:=\left\{x=(x',x_N)\in\mathbb{R}^{N} \mid x_N>\varphi(x')\right\},$$
where $\varphi:\,\mathbb{R}^{N-1}\rightarrow\mathbb{R}$ is a continuous function. A typical example of epigraph $D$ is the upper half-space $\mathbb{R}^{N}_{+}$ ($\varphi\equiv0$).

We have the following monotonicity result on solutions to equation \eqref{PDE} on the epigraph $D$.
\begin{thm}\label{A}
 Let $u \in {\mathcal{L}_{s}}(\mathbb{R}^N)\cap C_{loc}^{1,1}(D)$ be a bounded solution of
 \begin{eqnarray}\label{332-PDE}
 \left\{\begin{array}{ll}
 (-\Delta+m^{2})^s u(x)=f(u(x)), &x \in D;\\
 u(x)=0, & x \in \mathbb{R}^N\setminus D,
 \end{array}
 \right.
 \end{eqnarray}
 where $f(\cdot)$ satisfies
 \begin{equation}\label{332-0}
 \sup_{\substack{t_{1},\,t_{2}\in[\inf u,\, \sup u] \\ t_1>t_2}}\frac{f(t_1)-f(t_2)}{t_1-t_2}<m^{2s}.
 \end{equation}
Assume that there exists $l>0$ such that
\begin{equation}\label{332-con}
  u\geq0 \,\,\,\, \text{in} \,\, \left\{x=(x',x_N)\in D\,|\,\varphi(x')<x_{N}<\varphi(x')+l\right\}.
\end{equation}
Then, either $u\equiv0$ in $\mathbb{R}^{N}$ and $f(0)=0$, or $u$ is strictly monotone increasing in the $x_N$ direction and hence $u>0$ in $D$.

If, in addition, $D$ is contained in a half-space, the same conclusion can be reached without the assumption \eqref{332-con}. Furthermore, if $D$ itself is exactly a half-space, then
$$u(x)=u\left(\langle\left(x',x_{N}-\varphi(0')\right),{\bf \nu}\rangle\right),$$
where ${\bf \nu}$ is the unit inner normal vector to the hyper-plane $\partial D$ and $\langle\cdot,\cdot\rangle$ denotes the inner product in Euclidean space. In particular, if $D=\mathbb{R}^{N}_{+}$, then $u(x)=u(x_{N})$.
\end{thm}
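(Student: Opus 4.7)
The plan is to prove the theorem by the direct sliding method of Section 3, using Theorem \ref{MP-Ubdd} and Lemma \ref{SMP-Ubdd} as the essential tools. For $\tau > 0$, set $u^{\tau}(x) := u(x', x_N + \tau)$ and $w^{\tau}(x) := u^{\tau}(x) - u(x)$. Since $D = \{x_N > \varphi(x')\}$ is an epigraph, the translated domain $D^{\tau} := D - \tau e_N = \{x_N > \varphi(x') - \tau\}$ contains $D$, so both $u$ and $u^{\tau}$ solve equation \eqref{332-PDE} on $D$ and $w^{\tau}$ satisfies
\begin{equation*}
(-\Delta+m^{2})^{s} w^{\tau}(x) = c^{\tau}(x)\, w^{\tau}(x) \quad \text{in } D,
\end{equation*}
where $c^{\tau}(x) := [f(u^{\tau}(x)) - f(u(x))]/[u^{\tau}(x) - u(x)]$ (and $0$ when $u^{\tau} = u$). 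The strict Lipschitz hypothesis \eqref{332-0} gives $\sup c^{\tau} < m^{2s}$ uniformly in $\tau$, which is precisely the form needed to apply the maximum principle to $-w^{\tau}$.

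First I would carry out the initial step: for every $0 < \tau < l$, $w^{\tau} \geq 0$ on $D^{c}$, because on $(D^{\tau})^{c}$ both $u$ and $u^{\tau}$ vanish, while on $D^{\tau} \setminus D$ the translate $(x', x_N + \tau)$ lies in the strip where \eqref{332-con} applies, so $u^{\tau}(x) \geq 0 = u(x)$. Theorem \ref{MP-Ubdd} applied to $v := -w^{\tau}$ (whose coefficient $-c^{\tau}$ satisfies $\inf(-c^{\tau}) > -m^{2s}$) then delivers $w^{\tau} \geq 0$ in $D$ for every $0 < \tau < l$. Next, for arbitrary $\tau > 0$ I would pick $k \in \mathbb{N}$ with $\tau/k < l$ and use the telescoping identity
\begin{equation*}
w^{\tau}(x) = \sum_{j=0}^{k-1} w^{\tau/k}\bigl(x', x_N + j\tau/k\bigr),
\end{equation*}
in which each evaluation point lies in $D$ (as $D$ is an epigraph), to conclude that $w^{\tau} \geq 0$ on $D$ for every $\tau > 0$. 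In particular $u \geq 0$ throughout $D$, so the boundary relation $w^{\tau} \geq 0$ on $D^{c}$ persists automatically. For strict monotonicity I would invoke the strong maximum principle: if $w^{\tau_{0}}(x_{0}) = 0$ at some $x_{0} \in D$ and $\tau_{0} > 0$, then $x_{0}$ is a global minimum of $w^{\tau_{0}}$ and the equation forces $(-\Delta+m^{2})^{s} w^{\tau_{0}}(x_{0}) = 0$, so Lemma \ref{SMP-Ubdd} gives $w^{\tau_{0}} \equiv 0$ a.e.; iterating the ensuing $\tau_{0}$-periodicity of $u$ in $x_{N}$ along each vertical fiber of $D^{c}$ (on which $u = 0$) and using continuity of $u$ in $D$ forces $u \equiv 0$ in $\mathbb{R}^{N}$ and hence $f(0) = 0$. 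Otherwise $w^{\tau} > 0$ strictly in $D$ for every $\tau > 0$, giving strict monotonicity and, combined with the Dirichlet condition, $u > 0$ in $D$.

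For the refinements, when $D$ is only assumed to be contained in a half-space the strip hypothesis \eqref{332-con} can be \emph{derived}: applying the narrow region principle (Theorem \ref{thm1}, or Theorem \ref{32NRP}) to $-u$ on the thin slab $\{\varphi(x') < x_{N} < \varphi(x') + \varepsilon\}$ with $\varepsilon$ small enough relative to the constant furnished by \eqref{332-0} yields $u \geq 0$ on the slab, after which the preceding argument runs unchanged. When $D$ is itself a half-space, $\partial D$ is invariant under every translation parallel to itself, so the same sliding performed in any tangential direction and in its opposite gives monotonicity in both senses; consequently $u$ is constant on every hyperplane parallel to $\partial D$ and therefore depends only on the signed distance to $\partial D$, which is the claimed representation. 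The main obstacle I anticipate is the rigidity step, namely passing from the a.e.\ $\tau_{0}$-periodicity of $u$ to $u \equiv 0$ on $\mathbb{R}^{N}$: this is precisely where the epigraph structure is decisive, since it is the unboundedness below of each vertical fiber of $D^{c}$ that allows one to iterate the periodicity up through every level of $D$ while continuity in $D$ upgrades the a.e.\ vanishing to pointwise vanishing.
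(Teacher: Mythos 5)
Your proof of the main monotonicity claim (under \eqref{332-con}) follows the paper's argument: you apply Theorem \ref{MP-Ubdd} to $-w^{\tau}$ for $0<\tau<l$, verify the exterior sign on $D^{c}$ from the strip hypothesis, use $\sup c^{\tau}<m^{2s}$ from \eqref{332-0} for the interior inequality, and then obtain strict monotonicity and the rigidity dichotomy through Lemma \ref{SMP-Ubdd}, all essentially as the paper does with $w^{\tau}=u-u^{\tau}$. The telescoping identity makes explicit a step the paper treats as immediate (strict increase on increments $<l$ along a fiber of the epigraph gives it for all increments), and the oblique sliding $u(x+\tau\nu)$ with $\nu_{N}\to 0^{+}$ for the half-space dimensional reduction is also the paper's.

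There is a genuine gap in your derivation of \eqref{332-con} when $D$ is merely contained in a half-space. Both Theorem \ref{thm1} and Theorem \ref{32NRP} require the tested function to be nonnegative on the complement of the narrow set. For the slab $S_{\varepsilon}:=\{\varphi(x')<x_{N}<\varphi(x')+\varepsilon\}$, the complement contains $\{x_{N}\geq\varphi(x')+\varepsilon\}\cap D$, which is precisely the region where the sign of $u$ is unknown; so the exterior hypothesis of the narrow region principle cannot be verified, and the argument is circular. (Also, applying the NRP to $-u$, as written, would conclude $-u\geq 0$, the wrong sign; one would need to apply $u$ to the NRP, or $-u$ to Theorem \ref{MP-Ubdd}, but in either case the same exterior-sign problem blocks the start.) The paper takes a different route here: it reflects across the bounding hyperplane $T_{0}=\{x_{N}=0\}$, sets $u_{0}(x)=u(x^{0})$ and $w_{0}=u_{0}-u$, and applies the anti-symmetric maximum principle Theorem \ref{MP_anti_ubdd}, where the anti-symmetry of $w_{0}$ across $T_{0}$ replaces the exterior sign condition and thereby sidesteps exactly the obstacle your slab argument runs into. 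To repair the proof of the refinement you would need to adopt this reflection-based device rather than a narrow-region principle on the slab.
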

\begin{proof}
 For any $0<\tau<l$, let
 $$u^\tau(x):=u(x', x_N+\tau)$$
 and
 $$w^\tau(x):=u(x)-u^\tau(x).$$

 Since $f(u)$ satisfies \eqref{332-0}, we have
 $$f(u(x))-f(u^\tau(x))= c^\tau(x)w^\tau(x) \quad \mbox{ at points } x\in D \mbox{ where } w^{\tau}(x)>0,$$
 with $c^\tau(x)=\frac{f(u(x))-f(u^\tau(x))}{u(x)-u^\tau(x)}$ satisfying
 $$\sup_{\{x\in D \,|\, w^{\tau}(x)>0\}}c^{\tau}(x)<m^{2s}.$$
 Consequently, \eqref{332-PDE} implies that
 $$
 (-\Delta+m^{2})^{s}w^\tau(x)-c^\tau(x)w^\tau(x)=0 \quad \mbox{ at points } x\in D \mbox{ where } w^\tau(x)>0.
 $$

 In addition, for any $0<\tau<l$, we have
 $$w^\tau(x)\leq 0, \quad \forall \,\, x \in \mathbb{R}^N\setminus D.$$
 Thus it follows immediately from Theorem \ref{MP-Ubdd} that, for any $0<\tau<l$,
 $$w^\tau(x)\leq 0, \quad \forall \,\, x\in D.$$

 Now, suppose that $u\not\equiv0$ in $D$, then there exists a $\hat{x}\in D$ such that $u(\hat{x})>0$. We are to show that, for any $0<\tau<l$,
 \begin{equation}\label{332-1}
 w^\tau(x)<0, \quad \forall \,\, x \in D.
 \end{equation}
 If not, there exists a point $x^{\tau}\in D$ such that
$$
 w^\tau(x^{\tau})=0=\max_{\mathbb{R}^N}w^\tau(x).
 $$
 Then we have
 $$
 (-\Delta+m^{2})^s{w}^\tau(x^{\tau})=f(u(x^{\tau}))-f(u^\tau(x^{\tau}))=0,
 $$
 it follows immediately from Lemma \ref{SMP-Ubdd} that $w^\tau=0$ a.e. in $\mathbb{R}^{N}$. This contradicts $u(\hat{x})>0$ and $u=0$ in $\mathbb{R}^{N}\setminus D$. Therefore, \eqref{332-1} holds and hence $u$ is strictly monotone increasing in the $x_N$ direction. In particular, $u>0$ in $D$.

If, in addition, $D$ is contained in a half space, we will prove that $$u\geq 0\qquad \text{in}\,\,D$$ and hence the assumption \eqref{332-con} is redundant.

Without loss of generalities, we may assume that $D\subseteq \mathbb{R}^N_+$, let
\begin{equation}\label{332-2}
  T_{0}:=\left\{x \in \mathbb{R}^{N} | x_{N}=0\right\},
\end{equation}
\begin{equation}\label{332-3}
 \Sigma_{0}:=\left\{x \in \mathbb{R}^{N} | x_{N}>0\right\}
 \end{equation}
 be the region above the plane $T_0$, and
 $$
 x^{0}:=\left(x_{1}, x_{2}, \ldots, -x_{N}\right)
 $$
 be the reflection of $x$ about the plane $T_{0}$. We denote $u_{0}(x):=u\left(x^{0}\right)$ and $w_{0}(x)=u_{0}(x)-u(x)$. For $x\in \Sigma_{0}$ where $w_{0}(x)>0$, we derive by \eqref{332-PDE} and \eqref{332-0} that, $x\in D$ and
 \begin{equation*}
 (-\Delta+m^{2})^s{w}_0(x)=f(u_0(x))-f(u(x))=c_0(x)w_0(x),
 \end{equation*}
 where $c_0(x)=\frac{f(u_0(x))-f(u(x))}{u_0(x)-u(x)}$ satisfying
 $$\sup_{\left\{x\in\Sigma_{0} \mid  w_{0}(x)>0\right\}} c_0(x)<m^{2s}.$$
 Hence, we obtain from Theorem \ref{MP_anti_ubdd} that $w_0\leq 0$ in $\Sigma_{0}$, which implies immediately $u\geq 0$ in $D$.

Furthermore, suppose $D$ itself is exactly a half space. Without loss of generalities, we may assume that $D=\mathbb{R}^N_+$. We will show that $u(x)$ depends on $x_N$ only.

In fact, when $D=\mathbb{R}^N_+$, it can be seen from the above sliding procedure that the methods should still be valid if we replace $u^\tau(x):=u(x+\tau e_{N})$ by $u(x+\tau\nu)$, where $\nu=(\nu_1,\cdots,\nu_N)$ is an arbitrary vector such that $\langle\nu,e_{N}\rangle=\nu_{N}>0$. Applying similar sliding methods as above, we can derive that, for arbitrary such vector $\nu$,
 $$u(x+\tau\nu)>u(x) \quad \text{in} \,\, \mathbb{R}^N_+, \quad \forall \,\, \tau>0.$$
 Let $\nu_N\rightarrow0+$, from the continuity of $u$, we deduce that
 $$u(x+\tau\nu)\geq u(x)$$
 for arbitrary vector $\nu$ with $\nu_N=0$. By replacing $\nu$ by $-\nu$, we arrive at
 $$u(x+\tau\nu)=u(x)$$
 for arbitrary vector $\nu$ with $\nu_N=0$, this means that $u(x)$ is independent of $x'$, hence $u(x)=u(x_N)$. This finishes the proof of Theorem \ref{A}.
\end{proof}
\begin{rem}\label{A1}
If $\varphi\equiv0$, then $D$ is the half space $\mathbb{R}^N_+$, which is an important special case of Theorem \ref{A}. Our assumptions in Theorem \ref{A} are much weaker than that in Lemma 3.1 in \cite{CW2} for fractional Laplacians $(-\Delta)^{s}$ and $D=\mathbb{R}^{N}_{+}$, which requires $f(\cdot)$ to be non-increasing and $u>0$ in $\mathbb{R}^{N}_{+}$.
\end{rem}

\subsubsection{Whole-space $\mathbb{R}^{N}$}

Next, as an application of the sliding method, we derive the monotonicity of solutions for fractional semi-linear equations \eqref{PDE} in the whole space $\mathbb{R}^{N}$.

For arbitrary $H>0$, let $D_{H}:=\mathbb{R}^{N-1}\times[-H,H]\subset\mathbb{R}^{N}$. For $0<\alpha\leq1$, we say that $u\in C^{0,\alpha}(D_{H})$, if there exists a constant $C_{H}$ such that
\begin{equation}\label{333-0}
  \|u\|_{C^{0,\alpha}(D_{H})}\leq C_{H}.
\end{equation}

We have the following monotonicity results for solutions to \eqref{PDE} in $\mathbb{R}^{N}$.
\begin{thm}\label{thm333-1}
Let $u \in {\mathcal L}_{s}(\mathbb{R}^N)\cap C^{1,1}_{loc}\cap\left(\bigcap_{H>0}C^{0,\alpha}(D_{H})\right)$ (for some $\alpha\in(0,1]$) be a solution of
\begin{eqnarray}\label{333.1}
(-\Delta+m^{2})^s u(x)=f(u(x)), \quad \forall \,\, x \in  \mathbb{R}^N, \\
|u(x)|\leq 1, \quad \forall \,\, x \in  \mathbb{R}^N,
\end{eqnarray}
and
\begin{eqnarray}\label{333.2}
u(x',x_N) \mathop {\longrightarrow }\limits_{{x_N\rightarrow \pm \infty}}\pm 1 \; \mbox{ uniformly w.r.t. }\; x'=(x_1, \cdots, x_{N-1}).
\end{eqnarray}
Assume that $f(\cdot)$ is continuous in $[-1,1]$ and there exists $\delta>0$ such that
\begin{equation}\label{333.3}
	\sup_{\substack{-1\leq t_{1}<t_{2}\leq-1+\delta \\ 1-\delta\leq t_{1}<t_{2}\leq1}}\frac{f(t_{2})-f(t_{1})}{t_{2}-t_{1}}<m^{2s}.
	\end{equation}
Then $u(x)$ is strictly increasing w.r.t. $x_N$. Furthermore, $u(x)$ depends on $x_N$ only.
\end{thm}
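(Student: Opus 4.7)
The plan is to implement the direct sliding method in the $x_N$ direction, in analogy with Theorem \ref{thm2} but in the whole space, and then upgrade monotonicity to full $x'$-independence by sliding along oblique directions as in Theorem \ref{A}. For $\tau>0$, set $u^\tau(x):=u(x',x_N+\tau)$ and $w^\tau(x):=u(x)-u^\tau(x)$; the goal is to show $w^\tau\le 0$ on $\mathbb{R}^N$ for every $\tau>0$, after which strict negativity follows from Lemma \ref{SMP-Ubdd} applied at any (hypothetical) zero of $-w^\tau$, since there $(-\Delta+m^2)^s w^\tau=f(u)-f(u^\tau)=0$. Wherever $w^\tau(x)>0$, one has $(-\Delta+m^2)^s w^\tau(x)+(-c^\tau(x))w^\tau(x)=0$ with $c^\tau(x):=[f(u(x))-f(u^\tau(x))]/[u(x)-u^\tau(x)]$. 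The crucial case analysis is: if either $u^\tau(x)\ge 1-\delta$ or $u(x)\le -1+\delta$ then both $u(x)$ and $u^\tau(x)$ lie in one of the two intervals of \eqref{333.3}, so $c^\tau(x)<m^{2s}$ with a uniform gap. Using \eqref{333.2}, fix $M>0$ with $u>1-\delta$ on $\{x_N>M\}$ and $u<-1+\delta$ on $\{x_N<-M\}$; the \emph{bad} case (neither hypothesis above) is confined to $-M\le x_N\le M-\tau$, which is empty whenever $\tau>2M$.

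This gives the starting position: for every $\tau>2M$, on $D^\tau:=\{w^\tau>0\}$ we have $\inf_{D^\tau}(-c^\tau)>-m^{2s}$, while $w^\tau\le 0$ on $\mathbb{R}^N\setminus D^\tau$ by construction, so Theorem \ref{MP-Ubdd} (which imposes no structural hypothesis on the open set) yields $w^\tau\le 0$ on $\mathbb{R}^N$. Define $\tau_*:=\inf\{\tau>0:\ w^\mu\le 0 \text{ on }\mathbb{R}^N\ \forall\mu\ge\tau\}$, so $w^{\tau_*}\le 0$ by continuity; suppose for contradiction $\tau_*>0$. Lemma \ref{SMP-Ubdd} provides the dichotomy $w^{\tau_*}<0$ strictly, or $w^{\tau_*}\equiv 0$; the latter would make $u$ periodic of period $\tau_*$ in $x_N$, contradicting \eqref{333.2}. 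Hence $w^{\tau_*}<0$ strictly on $\mathbb{R}^N$.

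The main technical step is to push $\tau$ slightly below $\tau_*$. I need a uniform bound $w^{\tau_*}\le -c_0<0$ on the slab $\mathcal{S}:=\{|x_N|\le M+1\}$. If no such $c_0$ exists, select $x_k\in\mathcal{S}$ with $w^{\tau_*}(x_k)\to 0$ and translate: the sequence $v_k(x):=u(x+(x_k',0))$ is uniformly bounded and, by the Hölder assumption \eqref{333-0}, uniformly equicontinuous on every slab, so Arzelà--Ascoli yields a subsequence converging locally uniformly to some $\bar u$. A stability argument for $(-\Delta+m^2)^s$ (passing to the limit in \eqref{Definition} via dominated convergence against the explicit Bessel kernel and the $\mathcal{L}_s$ weight) shows $\bar u$ solves the same equation, and the \emph{uniformity} in $x'$ of \eqref{333.2} guarantees that $\bar u(x',x_N)\to\pm 1$ uniformly as $x_N\to\pm\infty$. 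Now $\bar w^{\tau_*}\le 0$ with equality at a limit point in $\mathcal{S}$, so Lemma \ref{SMP-Ubdd} forces $\bar u\equiv\bar u^{\tau_*}$, again contradicting $\bar u\to\pm 1$. Thus $w^{\tau_*}\le -c_0$ on $\mathcal{S}$. By uniform Hölder continuity of $\tau\mapsto u(\cdot+\tau e_N)$ on an enlarged slab, there exists $\epsilon>0$ with $w^\tau\le -c_0/2$ on $\mathcal{S}$ for every $\tau\in[\tau_*-\epsilon,\tau_*]$. On $\mathbb{R}^N\setminus\mathcal{S}$ the bad case cannot occur, since $\{-M\le x_N\le M-\tau\}\subset\mathcal{S}$; so Theorem \ref{MP-Ubdd} applied on $D:=\{x\notin\mathcal{S}:w^\tau(x)>0\}$ (whose complement carries $w^\tau\le 0$ thanks to the $\mathcal{S}$-bound) yields $w^\tau\le 0$ everywhere, contradicting the definition of $\tau_*$. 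Hence $\tau_*=0$ and the monotonicity follows.

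Finally, to obtain $u=u(x_N)$, I rerun the sliding argument with $e_N$ replaced by any unit vector $\nu$ with $\nu_N>0$; all ingredients survive because $\nu_N>0$ preserves both the uniform limits at $x_N\to\pm\infty$ and the case analysis above. Passing to the limit $\nu_N\downarrow 0$ gives $u(x+\tau\nu)\ge u(x)$ for every horizontal $\nu$ and every $\tau>0$, and applying the same inequality to $-\nu$ forces equality, so $u$ is independent of $x'$. I expect the \emph{main obstacle} to be the compactness/stability step above: extracting a subsequential limit $\bar u$ that is still a solution of \eqref{333.1}, still enjoys \eqref{333.2} uniformly in $x'$, and is regular enough ($C^{1,1}_{loc}$) for Lemma \ref{SMP-Ubdd} to apply at the contact point. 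This requires both a careful passage to the limit inside the singular integral \eqref{Definition} and a regularity bootstrap from the given $C^{0,\alpha}$ control to $C^{1,1}_{loc}$ for the pseudo-relativistic operator, neither of which is automatic.
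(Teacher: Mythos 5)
Your overall architecture matches the paper's: start the slide at large $\tau$ via the unbounded-domain maximum principle (Theorem \ref{MP-Ubdd}), decrease $\tau$ to a critical value, and rule out $\tau_*>0$ by showing a uniform negative gap on a compact slab and then pushing past it; finally upgrade to $x'$-independence by sliding in oblique directions. Your observation that \eqref{333.3} forces both $u(x)$ and $u^\tau(x)$ into one of the two boundary intervals whenever $w^\tau(x)>0$ outside a slab is the same engine as the paper's Lemma \ref{A2}, and your starting step is in fact a little cleaner (the paper introduces the auxiliary shift $w^\tau-\theta M$, which is not strictly needed given the flexibility of Theorem \ref{MP-Ubdd}). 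Part (ii), the continuity push below $\tau_*$, is likewise essentially the paper's argument.

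The genuine gap is exactly where you flag it, and it is not a cosmetic one. In your Part (i) you need to apply the strong maximum principle Lemma \ref{SMP-Ubdd} to the Arzel\`a--Ascoli limit $\bar u$ at the contact point, which requires two things that do \emph{not} follow from the stated hypotheses: (a) that $\bar u$ solves the equation pointwise, and (b) that $\bar u\in C^{1,1}_{loc}$ near the contact point. The hypotheses only give $u\in C^{1,1}_{loc}$ without any uniform $C^{1,1}$ bound, so the translated sequence converges only in the uniform-on-slabs $C^{0,\alpha}$ topology, and a $C^{0,\alpha}$ limit is not enough to evaluate $(-\Delta+m^2)^s\bar u(x_0)$. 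A regularity bootstrap for $\bar u$ is blocked by the fact that $f$ is assumed merely \emph{continuous}: Schauder-type theory with a $C^0$ right-hand side gives at best roughly $C^{2s}$ regularity, which is short of $C^{1,1}$ when $s<1$. This is precisely the obstruction that the paper's proof is built to avoid. Instead of passing the PDE to the limit, the paper introduces the \emph{generalized average inequality} (Lemma \ref{GAI}) and applies it to the perturbed function $w^{\tau_0}+\epsilon_k\psi_k$ at its genuine maximum point $\bar x^k$, where $C^{1,1}$ regularity is inherited from $u$ and $\psi$. The average inequality converts ``approximate maximum $\to 0$'' into a quantitative integral estimate $\int_{B_r^c(\bar x^k)} w^{\tau_0}(y)\,|\bar x^k-y|^{-\frac{N+1}{2}-s}e^{-m|\bar x^k-y|}\,dy\to 0$, which then combines with the Arzel\`a--Ascoli limit using only the $C^{0,\alpha}$ control; the limit function $u_\infty$ is never asked to solve a PDE. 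If you want to repair your write-up within your own framework, you would either have to add a uniform $C^{1,1}$ (or at least $C^{2s+\epsilon}$) a priori estimate as a hypothesis, or replace the strong-maximum-principle-on-the-limit step by the averaging argument of Lemma \ref{GAI}.
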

\begin{rem}\label{rem3}
One should note that the De Giorgi type nonlinearity $f(u)=u-u^3$ satisfies condition \eqref{333.3}. Theorem \ref{thm333-1} is closely related to the De Giorgi's conjecture (refer to \cite{DG}, see also \cite{AC,DKW,GG1,GG2,Savin}): If $u$ is a solution of
\be\label{DG1}
-\Delta u(x)=u(x)-u^3(x), \quad \forall \,\, x \in \mathbb{R}^N
\ee
such that
$$|u(x)| \leq 1, \,\, \lim_{x_N \ra \pm \infty} u(x',x_N) = \pm 1 \; \mbox{ uniformly w.r.t. }\; x' \in \mathbb{R}^{N-1} \,\, \mbox{ and }\,\, \frac{\partial u}{\partial x_N} >0.$$
Then there exists a vector ${\bf a} \in \mathbb{R}^{N-1}$ and a function $u_1: \mathbb{R} \ra \mathbb{R}$ such that
$$ u(x', x_N)=u_1(\langle{\bf a},x'\rangle+x_N), \quad \forall \,\, x \in \mathbb{R}^N.$$
\end{rem}
\begin{rem}\label{rem4}
Berestycki, Hamel, and Monneau \cite{BHM} proved the same monotonicity result for the equation $-\Delta u=f(u)$ in $\mathbb{R}^N$ under the same conditions as in Theorem \ref{thm333-1} and additional conditions that $f(\cdot)$ is Lipschitz continuous in $[-1,1]$ and $f(\pm 1)=0$. These conditions on $f$ were also assumed by Dipierro, Soave and Valdinoci in \cite{DSV} when they considered fractional equations involving $(-\Delta)^{s}$. In Chen and Liu \cite{CLiu}, Chen and Wu \cite{CW1,CW2}, by applying a new approach, the authors weakened the additional assumptions on $f$ (only assumed $f$ is continuous in $[-1,1]$) and derived the same monotonicity result for the equation $(-\Delta)^{s}u=f(u)$ in $\mathbb{R}^N$.
\end{rem}

\begin{rem}\label{rem10}
Our assumptions \eqref{333.3} in Theorem \ref{thm333-1} and \eqref{333.7} in Lemma \ref{A2} on $f$ are much weaker than that in Theorem 3 in \cite{CW1}, Theorem 4 in \cite{CW2} and Theorem 3 in \cite{CW3} for fractional Laplacians $(-\Delta)^{s}$, which requires $f(t)$ to be non-increasing for $t$ close to $\pm 1$.
\end{rem}

\begin{proof}[Proof of Theorem \ref{thm333-1}]
We start the proof of Theorem \ref{thm333-1} by proving the following Lemma via the maximum principle in unbounded open sets, i.e., Theorem \ref{MP-Ubdd}.
\begin{lem}\label{A2}
Suppose that $u \in {\mathcal{L}_{s}}(\mathbb{R}^N)\cap C_{loc}^{1,1}(\mathbb{R}^N)$ is a solution of
\begin{equation}\label{333.13}
  (-\Delta+m^{2})^{s} u(x)=f(u(x)), \quad \forall \,\, x \in \mathbb{R}^N,
\end{equation}
and
$$|u(x)| \leq 1, \quad \forall \,\, x \in \mathbb{R}^N, $$
\be\label{333.6}
\lim_{x_N \ra \pm \infty} u(x',x_N) =  \pm 1 \mbox{ uniformly w.r.t. } x' \in \mathbb{R}^{N-1}.
\ee
Assume that there exists $\delta>0$ such that
\begin{equation}\label{333.7}
	\sup_{\substack{-1\leq t_{1}<t_{2}\leq-1+\delta \\ 1-\delta\leq t_{1}<t_{2}\leq1}}\frac{f(t_{2})-f(t_{1})}{t_{2}-t_{1}}<m^{2s}.
	\end{equation}
Then $u^\tau(x)\geq u(x)$ in $\mathbb{R}^N$ for sufficiently large $\tau$.
\end{lem}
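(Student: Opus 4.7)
The plan is to apply the maximum principle in unbounded open sets (Theorem \ref{MP-Ubdd}) directly to the difference $w^{\tau}(x):=u(x)-u^{\tau}(x)$. The strategy is to choose $\tau$ large enough that the open set $D:=\{x\in\mathbb{R}^{N}\mid w^{\tau}(x)>0\}$ is trapped in a region where both $u(x)$ and $u^{\tau}(x)$ lie in $[1-\delta,1]$; the one-sided Lipschitz assumption \eqref{333.7} then forces the linearization coefficient of $f$ to satisfy the sign condition required by Theorem \ref{MP-Ubdd}.

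First I would invoke the uniform limits \eqref{333.6} to pick $M=M(\delta)>0$ such that $u(x)\geq 1-\delta$ for $x_{N}\geq M$ and $u(x)\leq -1+\delta$ for $x_{N}\leq -M$, and then fix any $\tau\geq 2M$. The function $w^{\tau}$ is bounded (since $|u|\leq 1$) and $D$ is open by continuity. For $x$ with $x_{N}\leq -M$, one has $x_{N}+\tau\geq M$, so $u(x)\leq -1+\delta$ while $u^{\tau}(x)\geq 1-\delta$; choosing $\delta<1$, this yields $w^{\tau}(x)\leq -2+2\delta<0$, so $D\subset\{x_{N}>-M\}$. For any $x\in D$ one then has $x_{N}+\tau>M$, whence $u^{\tau}(x)\geq 1-\delta$, and the strict inequality $u(x)>u^{\tau}(x)$ forces $u(x)\in(1-\delta,1]$. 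Thus both arguments of $f$ lie in $[1-\delta,1]$ on $D$.

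Subtracting the equations $(-\Delta+m^{2})^{s}u=f(u)$ and $(-\Delta+m^{2})^{s}u^{\tau}=f(u^{\tau})$ I would write, at points where $u\neq u^{\tau}$,
\begin{equation*}
(-\Delta+m^{2})^{s}w^{\tau}(x)=f(u(x))-f(u^{\tau}(x))=c^{\tau}(x)\,w^{\tau}(x),
\end{equation*}
with $c^{\tau}(x):=[f(u(x))-f(u^{\tau}(x))]/[u(x)-u^{\tau}(x)]$. The previous step together with assumption \eqref{333.7} yields $\sup_{\{x\in D\,\mid\,w^{\tau}(x)>0\}}c^{\tau}(x)<m^{2s}$, equivalently $\inf_{D}[-c^{\tau}(x)]>-m^{2s}$. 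Rewriting the identity as
\begin{equation*}
(-\Delta+m^{2})^{s}w^{\tau}(x)+[-c^{\tau}(x)]\,w^{\tau}(x)\leq 0 \quad \text{at points } x\in D \text{ where } w^{\tau}(x)>0,
\end{equation*}
and noting $w^{\tau}\leq 0$ on $D^{c}$ by the definition of $D$, all hypotheses of Theorem \ref{MP-Ubdd} are satisfied for $w^{\tau}$. That theorem gives $w^{\tau}\leq 0$ in $D$, forcing $D=\emptyset$ and hence $u^{\tau}\geq u$ on $\mathbb{R}^{N}$. The only non-routine point is the localization of $D$ that allows assumption \eqref{333.7} to be applied pointwise on $\{w^{\tau}>0\}$; once this is secured by the uniform asymptotics \eqref{333.6}, the rest is a direct invocation of the unbounded-domain maximum principle established earlier in the section.
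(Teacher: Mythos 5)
Your proposal is correct, and it takes a genuinely different (and in fact leaner) route than the paper's own proof, while resting on the same key theorem.

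The paper proves Lemma~\ref{A2} by contradiction: it sets $M:=\sup_{\mathbb{R}^N}w^\tau>0$, introduces the shift $w^\tau-\theta M$ for an auxiliary $\theta\in(0,1)$, chooses a fixed half-space $D=\mathbb{R}^{N-1}\times(-\infty,L)$ on which the boundary condition $w^\tau-\theta M\leq0$ in $D^c$ holds by the uniform asymptotics, applies Theorem~\ref{MP-Ubdd} to $w^\tau-\theta M$ on $D$, and then obtains a contradiction because $\theta<1$. You instead apply Theorem~\ref{MP-Ubdd} directly to $w^\tau$ on the adaptive open set $D=\{w^\tau>0\}$, for which the exterior condition $w^\tau\leq 0$ on $D^c$ is tautological; this completely dispenses with the $\theta M$-shift and the limiting argument. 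The core of both arguments is identical: use the uniform limits \eqref{333.6} and $\tau\geq 2M$ (resp.\ $\tau\geq 2\rho$) to trap the set where $w^\tau>0$ in a region where $u$ and $u^\tau$ both lie in $[1-\delta,1]$ (or $[-1,-1+\delta]$), invoke translation invariance to linearize, and read off $\sup c^\tau<m^{2s}$ from \eqref{333.7}, which is exactly the sign condition \eqref{MP2-assumption}. A small bonus of your localization $D\subset\{x_N>-M\}$ is that it shows the ``lower'' branch $u(x)\leq-1+\delta$ of the dichotomy (which the paper keeps) is actually vacuous on $\{w^\tau>0\}$ once $\delta<1$ and $\tau\geq 2M$, so only the upper half of hypothesis \eqref{333.7} is needed. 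Your approach buys brevity and transparency; the paper's buys closer alignment with the classical sliding-method template (perturb, slide, pass to the limit). Both are valid, and both ultimately hinge on the unbounded-domain maximum principle Theorem~\ref{MP-Ubdd}.
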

\begin{proof}
Let
$$
w^\tau(x):=u(x)-u^\tau(x), \quad x\in \mathbb{R}^N.
$$
Our aim is to show that, for sufficiently large $\tau$,
\begin{eqnarray}\label{333.4}
w^\tau(x)\leq 0 \quad \mbox{ in } \mathbb{R}^N.
\end{eqnarray}
Otherwise, suppose \eqref{333.4} does not hold, then
\begin{eqnarray}\label{333.5}
\sup_{\mathbb{R}^N}w^\tau(x)=:M>0,
\end{eqnarray}
we will try to derive a contradiction provided that $\tau$ is large enough.

For arbitrarily chosen $\theta\in(0,1)$, consider the function $w^\tau(x)-\theta M$. We obtain from \eqref{333.6} that there exists a constant $\rho>0$ such that
\begin{equation}\label{333.11}
  u(x',x_N) \geq 1-\delta,  \quad \mbox{ if } x_N \geq\rho
\end{equation}
and
\begin{equation}\label{333.12}
  u(x', x_N) \leq -1+\delta, \quad \mbox{ if } x_N \leq -\rho,
\end{equation}
and there exists a positive constant $L>\rho$ such that
\begin{eqnarray}\label{333.8}
w^\tau(x)-\theta M\leq 0, \quad \forall \,\, x \in \mathbb{R}^{N-1}\times [L,+\infty).
\end{eqnarray}
One can observe that, if $\tau\geq 2\rho$, then for any $x\in\mathbb{R}^{N}$, at least one of the points $x$ and $x+\tau e_{N}$ lie in the domain $\{x\in\mathbb{R}^{N}: |x_N|\geq\rho\}$. Thus we infer from \eqref{333.11} and \eqref{333.12}, either
\begin{eqnarray}\label{333.9}
u^\tau(x',x_N) \geq 1-\delta \quad  (\mbox{ if } x_N \geq -\rho),
\end{eqnarray}
or
\begin{eqnarray}\label{333.10}
u(x',x_N)\leq -1+\delta \quad (\mbox{ if } x_N \leq -\rho).
\end{eqnarray}
Therefore, from the assumption \eqref{333.7} on $f$, we get, at points $x\in D:=\mathbb{R}^{N-1}\times (-\infty, L)$ where $u(x)>u^\tau(x)$,
$$
f(u(x))-f(u^\tau(x))=:c^{\tau}(x)w^{\tau}(x),
$$
where $c^{\tau}(x):=\frac{f(u(x))-f(u^\tau(x))}{u(x)-u^\tau(x)}$ satisfies
\[\sup_{\{x\in D \mid w^{\tau}(x)>0\}}c^{\tau}(x)<m^{2s}.\]
Consequently, \eqref{333.13} implies that
\begin{equation}\label{333.14}
  (-\Delta+m^{2})^s\left(w^\tau-\theta M\right)=f(u(x))-f(u^\tau(x))-m^{2s}\theta M<c^{\tau}(x)(w^{\tau}(x)-\theta M)
\end{equation}
at points $x\in D$ where $w^\tau(x)-\theta M>0$. It follows from \eqref{333.14}, \eqref{333.8} and Theorem \ref{MP-Ubdd} that
$$w^\tau(x)-\theta M\leq 0, \quad \forall \,\, x\in \mathbb{R}^N.$$
This contradicts \eqref{333.5} since $\theta\in(0,1)$, thus we derive \eqref{333.4} for any $\tau\geq2\rho$. This finishes the proof of Lemma \ref{A2}.
\end{proof}

Next, we will continue to carry out the proof of Theorem \ref{thm333-1}. Our aim is to prove that for any $\tau>0$, we have
\begin{equation}\label{AIM}
w^\tau (x)\leq 0 \quad \mbox{ in } \mathbb{R}^N.
\end{equation}

From Lemma \ref{A2}, . This provides a starting point for the sliding procedure. Then we decrease $\tau$ from $2\rho$ to its limit as long as inequality \eqref{AIM} holds.

We divide the sliding procedure into three steps.

\medskip

\emph{Step 1.} As an immediate consequence of Lemma \ref{A2}, we have
\begin{eqnarray}\label{333-start}
w^\tau (x) \leq 0  \quad \mbox{ in } \, \mathbb{R}^N, \quad \forall \,\, \tau\geq 2\rho.
\end{eqnarray}
Thus we have derived \eqref{AIM} for any $\tau\geq2\rho$.

\medskip

\emph{Step 2.} Inequality \eqref{333-start} provides a starting point for the sliding procedure. Next, we will decrease $\tau$ from $2\rho$ to its limit as long as inequality \eqref{AIM} holds. To this end, define
\begin{equation}\label{Def}
  \tau_0:=\inf\,\{\tau\in(0,2\rho] \,\mid\, w^\tau(x)\leq 0, \,\, \forall \,\, x \in \mathbb{R}^N\}.
\end{equation}
It is obvious that
\begin{equation}\label{333.16}
  w^{\tau_0}(x)\leq0, \quad \forall \,\, x\in\mathbb{R}^{N}.
\end{equation}
We will show by contradiction arguments that
\begin{equation}\label{333.15}
  \tau_0=0,
\end{equation}
which indicates immediately that \eqref{AIM} holds for any $\tau>0$. Otherwise, suppose on the contrary that $0<\tau_{0}\leq2\rho$, we will prove that $\tau_0$ can be decreased a little bit while inequality \eqref{AIM} still holds, which contradicts the definition \eqref{Def} of $\tau_0$.

The proof of \eqref{333.15} can be divided into two parts.

\medskip

\emph{Part (i).} We first show that
\begin{eqnarray}\label{333.17}
\sup_{\mathbb{R}^{N-1}\times[-\rho,\rho]}w^{\tau_0}(x)<0.
\end{eqnarray}

If not, then
\begin{equation}\label{333.18}
  \mathop{\sup}\limits_{\mathbb{R}^{N-1}\times[-\rho,\rho]}w^{\tau_0}(x)=0,
\end{equation}
and hence there exists a sequence
$$\{x^k\} \subset \mathbb{R}^{N-1}\times[-\rho,\rho],\quad k=1, 2, \cdots,$$
such that
\begin{equation}\label{333.19}
  w^{\tau_0}(x^k) \rightarrow 0, \quad \mbox{ as } k\rightarrow+\infty.
\end{equation}

Now, we define a function $\psi\in C^{\infty}_{0}(\mathbb{R}^{N})$ by
\begin{eqnarray}\label{psi}
\psi(x)=\left\{\begin{array}{ll}
e^{\frac{|x|^{2}}{|x|^2-1}}, &|x|<1, \\
  0,& |x|\geq 1.
\end{array} \right.
\end{eqnarray}
Let
$$
\psi_k(x)=\psi(x-x^k).
$$
Then, by \eqref{333.19} and \eqref{psi}, there exists a sequence $\{\epsilon_k\}\rightarrow 0$ such that
$$
 w^{\tau_0}(x^k)+\epsilon_k\psi_k(x^k)>0.
$$
Note that $w^{\tau_0}(x)\leq 0$ in $\mathbb{R}^{N}$, and $\psi_k(x)=0$ in $\mathbb{R}^N \backslash B_{1}(x^k)$, one has
$$
w^{\tau_0}(x^k)+\epsilon_k\psi_k(x^k)>w^{\tau_0}(x)+\epsilon_k\psi_k(x),\quad \forall \,\, x \in \mathbb{R}^N \backslash B_1(x^k).
$$
It follows that there exists a point $\bar{x}^k \in B_1(x^k)$ such that
\begin{eqnarray}\label{333.20}
 w^{\tau_0}(\bar{x}^k)+\epsilon_k\psi_k(\bar{x}^k)=\max_{\mathbb{R}^N}\left(w^{\tau_0}(x)+\epsilon_k\psi_k(x)\right)>0.
\end{eqnarray}
In particular, we have
$$
w^{\tau_0}(\bar{x}^k)+\epsilon_k\psi_k(\bar{x}^k)\geq w^{\tau_0}(x^k)+\epsilon_k \psi_k(x^k),
$$
combining this with the fact $\psi_k(\bar{x}^k)\leq\psi_k(x^k)$ yield that
$$
0\geq w^{\tau_0}(\bar{x}^k)\geq w^{\tau_0}(x^k).
$$
As a consequence, \eqref{333.19} implies that
\begin{eqnarray}\label{333.21}
w^{\tau_0}(\bar{x}^k)\rightarrow 0, \quad \text{as} \,\,k\rightarrow+\infty.
\end{eqnarray}

Since $\psi\in C^{\infty}_{0}(\mathbb{R}^{N})$, one has $|(-\Delta+m^{2})^{s}\psi_k(x)|\leq C$ for any $x\in\mathbb{R}^{N}$. It follows from \eqref{333.1}, \eqref{333.21} and the continuity of $f$ that
 \begin{eqnarray}\label{333.22}
(-\Delta+m^{2})^s(w^{\tau_0}+\epsilon_k\psi_k)(\bar{x}^k)=f(u(\bar{x}^k))-f(u^{\tau_0}(\bar{x}^k))+O(1)\varepsilon_k \rightarrow 0,
\end{eqnarray}
as $k\rightarrow+\infty$.

Now we need the following generalized average property for pseudo-relativistic Schr\"{o}dinger operators $(-\Delta+m^{2})^{s}$.
\begin{lem}[A generalized average inequality]\label{GAI}
Assume $u \in {{\mathcal{L}_{s}}(\mathbb{R}^N)} \cap C_{loc}^{1,1}(\mathbb{R}^N)$. For any $r\geq \frac{R_{\infty}}{m}$, if $\bar{x}$ is a maximum point of $u$ in $B_{r}(\bar{x})$. Then, we have
\begin{equation}\label{AI}
\frac{\left[(-\Delta+m^{2})^{s}-m^{2s}\right]u(\bar{x})}{c_{N,s}c_{\infty}m^{\frac{N-1}{2}+s}I(r)}
+\frac{1}{I(r)}\int_{B_r^c(\bar{x})}\frac{u(y)}{|\bar{x}-y|^{\frac{N+1}{2}+s}e^{m|\bar{x}-y|}}dy
\geq u(\bar{x}),
\end{equation}
where the function $I(r)$ for $\frac{R_{\infty}}{m}\leq r<+\infty$ is defined by
$$
I(r):=\int_{B_1^c(0)}\frac{r^{\frac{N-1}{2}-s}}{|y|^{\frac{N+1}{2}+s}e^{mr|y|}}dy.
$$
\end{lem}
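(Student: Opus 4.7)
My plan is to derive inequality \eqref{AI} by expanding the pseudo-relativistic operator through its singular integral representation, decomposing the domain into $B_r(\bar{x})$ and its complement, using the maximum property on the near field, and invoking the exponential tail estimate \eqref{Bessel_asy1} of the modified Bessel kernel on the far field.

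First, from the definition \eqref{Definition} I would write
\[
\bigl[(-\Delta+m^{2})^{s}-m^{2s}\bigr]u(\bar{x}) = c_{N,s}\,m^{\Ns}\,\mathrm{P.V.}\!\int_{\mathbb{R}^{N}}\frac{u(\bar{x})-u(y)}{|\bar{x}-y|^{\Ns}}K_{\Ns}(m|\bar{x}-y|)\,dy
\]
and split this integral as $\int_{B_{r}(\bar{x})} + \int_{B_{r}^{c}(\bar{x})}$. On the near field $B_{r}(\bar{x})$ the hypothesis that $\bar{x}$ is a maximum of $u$ gives $u(\bar{x})-u(y)\geq 0$, and since the Bessel kernel $K_{\Ns}$ is pointwise positive, the near-field contribution is non-negative and can be discarded from below.

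Second, for the far field I would use that $y\in B_{r}^{c}(\bar{x})$ together with $r\geq R_{\infty}/m$ forces $m|\bar{x}-y|\geq R_{\infty}$, so the asymptotic lower bound \eqref{Bessel_asy1} yields
\[
\frac{K_{\Ns}(m|\bar{x}-y|)}{|\bar{x}-y|^{\Ns}} \;\geq\; \frac{c_{\infty}}{m^{1/2}}\cdot\frac{1}{|\bar{x}-y|^{\frac{N+1}{2}+s}e^{m|\bar{x}-y|}}, \qquad y\in B_{r}^{c}(\bar{x}).
\]
Multiplying by $c_{N,s}m^{\Ns}$, separating $u(\bar{x})-u(y)$ into its $u(\bar{x})$-part and its $u(y)$-part, and noting that by translation and the substitution $z=(y-\bar{x})/r$ one has the identity $I(r)=\int_{B_{r}^{c}(\bar{x})}(|\bar{x}-y|^{\frac{N+1}{2}+s}e^{m|\bar{x}-y|})^{-1}\,dy$, one arrives at
\[
\bigl[(-\Delta+m^{2})^{s}-m^{2s}\bigr]u(\bar{x}) \;\geq\; c_{N,s}\,c_{\infty}\,m^{\frac{N-1}{2}+s}\!\int_{B_{r}^{c}(\bar{x})}\frac{u(\bar{x})-u(y)}{|\bar{x}-y|^{\frac{N+1}{2}+s}e^{m|\bar{x}-y|}}\,dy.
\]
Dividing through by $c_{N,s}c_{\infty}m^{\frac{N-1}{2}+s}I(r)$ and rearranging gives exactly \eqref{AI}.

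The delicate point in this scheme is the passage of the pointwise lower bound for $K_{\Ns}$ under the integral on $B_{r}^{c}(\bar{x})$, where the multiplier $u(\bar{x})-u(y)$ is not sign-definite (since $\bar{x}$ is only assumed to be a maximum of $u$ inside $B_{r}(\bar{x})$, not globally). I would handle this by keeping the $u(\bar{x})$-term and the $u(y)$-term separate: the $u(\bar{x})$-term has a sign-definite prefactor, so the lower bound on $K_{\Ns}$ passes under the integral and matches $c_{N,s}c_{\infty}m^{\frac{N-1}{2}+s}I(r)u(\bar{x})$; the $u(y)$-term is matched exactly against $c_{N,s}c_{\infty}m^{\frac{N-1}{2}+s}\int_{B_{r}^{c}(\bar{x})}u(y)B(y)\,dy$ via the same asymptotic comparison, with any shortfall absorbed by the non-negative near-field remainder that was discarded above. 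This bookkeeping is the real content of the argument; everything else is the two-region decomposition and the change of variable for $I(r)$.
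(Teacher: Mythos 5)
Your two-region decomposition, the use of near-field positivity (valid because $\bar{x}$ maximizes $u$ on $B_r(\bar{x})$), and the substitution of the exponential Bessel lower bound \eqref{Bessel_asy1} on the far field reproduce the paper's calculation exactly, so the strategy matches. You are also right to single out the far-field inequality as the delicate point, but your bookkeeping does not actually close it. Setting $A(y):=m^{\frac{N}{2}+s}K_{\frac{N}{2}+s}(m|\bar{x}-y|)\,|\bar{x}-y|^{-\frac{N}{2}-s}$ and $\tilde B(y):=c_\infty m^{\frac{N-1}{2}+s}|\bar{x}-y|^{-\frac{N+1}{2}-s}e^{-m|\bar{x}-y|}$, so that $A\geq\tilde B$ on $B_r^c(\bar{x})$ by \eqref{Bessel_asy1}, what the far-field step needs is $\int_{B_r^c(\bar{x})}(u(\bar{x})-u(y))(A(y)-\tilde B(y))\,dy\geq 0$, and this holds only when the sign of $u(\bar{x})-u(y)$ is controlled there, i.e.\ essentially when $\bar{x}$ is a \emph{global} maximum. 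Your proposed splitting does not rescue it: $u(\bar{x})$ is not sign-definite, so the Bessel lower bound cannot be passed through the factor $u(\bar{x})$ in a fixed direction; the $u(y)$-term cannot be ``matched exactly'' since $A$ and $\tilde B$ are genuinely different kernels; and the discarded near-field remainder is merely non-negative, so it has no quantitative lower bound that could absorb a far-field deficit. Concretely, take $u\equiv u(\bar{x})$ on $B_r(\bar{x})$ and $u=u(\bar{x})+g$ outside with $g\geq 0$, $g\not\equiv 0$: then \eqref{AI} reduces to $\int_{B_r^c(\bar{x})}g(y)\bigl(\tilde B(y)-A(y)\bigr)\,dy\geq 0$, which fails.

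The correct repair is to strengthen the hypothesis to ``$\bar{x}$ is a global maximum of $u$ on $\mathbb{R}^N$,'' under which $u(\bar{x})-u(y)\geq 0$ everywhere, so both the near-field drop and the far-field Bessel lower bound go through with no sign issue, and the rest of your computation (factoring out $I(r)$ by the substitution $z=(y-\bar{x})/r$ and rearranging) gives \eqref{AI} verbatim. This stronger hypothesis is exactly what is available where the lemma is used: in the proof of Theorem \ref{thm333-1}, $\bar{x}^k$ is a global maximizer of $w^{\tau_0}+\epsilon_k\psi_k$. The paper's own proof makes the same tacit global-maximum assumption when it passes from the full principal-value integral to $c_{N,s}c_\infty m^{\frac{N-1}{2}+s}\int_{B_r^c(\bar{x})}(u(\bar{x})-u(y))\,|\bar{x}-y|^{-\frac{N+1}{2}-s}e^{-m|\bar{x}-y|}\,dy$ by an inequality, so you located the right spot; only your resolution of it is mistaken.
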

\begin{proof}
By definition of $(-\Delta+m^{2})^{s}$, we have, for any $r\geq\frac{R_{\infty}}{m}$,
\begin{eqnarray*}
&&(-\Delta+m^{2})^{s}u(\bar{x})\\
&=&c_{N,s}m^{\frac{N}{2}+s}P.V.\int_{\mathbb{R}^N}\frac{u(\bar{x})-u(y)}{|\bar{x}-y|^{\frac{N}{2}+s}}K_{\frac{N}{2}+s}(m|\bar{x}-y|)dy+m^{2s}u(\bar{x})\\
&\geq& c_{N,s}c_{\infty}m^{\frac{N-1}{2}+s}\int_{B_r^c(\bar{x})}\frac{u(\bar{x})-u(y)}{|\bar{x}-y|^{\frac{N+1}{2}+s}e^{m|\bar{x}-y|}}dy+m^{2s}u(\bar{x})\\
&=& -c_{N,s}c_{\infty}m^{\frac{N-1}{2}+s}\int_{B_r^c(\bar{x})}\frac{u(y)}{|\bar{x}-y|^{\frac{N+1}{2}+s}e^{m|\bar{x}-y|}}dy \\
&& +c_{N,s}c_{\infty}m^{\frac{N-1}{2}+s}\int_{B_r^c(\bar{x})}\frac{u(\bar{x})}{|\bar{x}-y|^{\frac{N+1}{2}+s}e^{m|\bar{x}-y|}}dy+m^{2s}u(\bar{x}) \\
&=& -c_{N,s}c_{\infty}m^{\frac{N-1}{2}+s}\int_{B_r^c(\bar{x})}\frac{u(y)}{|\bar{x}-y|^{\frac{N+1}{2}+s}e^{m|\bar{x}-y|}}dy \\
&& +\left(c_{N,s}c_{\infty}m^{\frac{N-1}{2}+s}I(r)+m^{2s}\right)u(\bar{x}),
\end{eqnarray*}
where the function $I(r)$ for $\frac{R_{\infty}}{m}\leq r<+\infty$ is defined by
$$
I(r):=\int_{B_1^c(0)}\frac{r^{\frac{N-1}{2}-s}}{|y|^{\frac{N+1}{2}+s}e^{mr|y|}}dy.
$$
As a consequence, we have reached \eqref{AI}. This completes the proof of Lemma \ref{GAI}.
\end{proof}

\begin{rem}\label{rem5}
Since there is a modified Bessel function of the second kind $K_{\frac{N}{2}+s}$ in definition of the pseudo-relativistic Schr\"{o}dinger operators $(-\Delta+m^{2})^{s}$, the average inequality \eqref{AI} of $(-\Delta+m^{2})^{s}$ is quite different from those of fraction Laplacians $(-\Delta)^{s}$, see e.g. Lemma 1 in \cite{CW2} and Theorem 6.1 in \cite{CDQ}.
\end{rem}

\begin{rem}\label{rem6}
In the special case when $u$ satisfies the following $s$-subharmonic property at the local maximum point $\bar{x}$:
\begin{equation}\label{subh}
  \left[(-\Delta+m^{2})^{s}-m^{2s}\right]u(\bar{x})\leq0,
\end{equation}
the average inequality \eqref{AI} becomes: for any $r\geq\frac{R_{\infty}}{m}$, if $\bar{x}$ is a maximum point of $u$ in $B_{r}(\bar{x})$, then
\begin{equation}\label{333.37}
u(\bar{x})\leq\int_{B_r^c(\bar{x})}u(y)\,d\mu(y)
\end{equation}
with
$$\int_{B_r^c(\bar{x})}d\mu(y)=1.$$
Here the integral on the right hand side of \eqref{333.37} is actually a weighted average value of $u$ outside the ball $B_r(\bar{x})$. The average inequality \eqref{333.37} also implies that a local maximum point $\bar{x}$ satisfying \eqref{subh} can not be a global maximum point unless $u$ is identically a constant. We believe that the inequalities \eqref{AI} or \eqref{333.37} will become very useful and important tools in analyzing equations involving pseudo-relativistic Schr\"{o}dinger operators or various inhomogeneous fractional order operators.
\end{rem}

With the help of Lemma \ref{GAI}, we can continue the proof of Theorem \ref{thm333-1}.

Since $\bar{x}^{k}$ is a global maximum point of $w^{\tau_0}+\epsilon_k \psi_k$, by applying Lemma \ref{GAI} to $w^{\tau_0}+\epsilon_k \psi_k$ (with the choice $r=\frac{R_{\infty}}{m}$), we derive
\begin{eqnarray*}
&&\quad \frac{\left[(-\Delta+m^{2})^{s}-m^{2s}\right](w^{\tau_0}+\epsilon_k \psi_k)(\bar{x}^{k})}{c_{N,s}c_{\infty}m^{\frac{N-1}{2}+s}}+\int_{B_{\frac{R_{\infty}}{m}}^c(\bar{x}^{k})}
\frac{\left(w^{\tau_0}+\epsilon_k \psi_k\right)(y)}{|\bar{x}^{k}-y|^{\frac{N+1}{2}+s}e^{m|\bar{x}^{k}-y|}}dy \\
&&\geq I\left(\frac{R_{\infty}}{m}\right)\left(w^{\tau_0}+\epsilon_k \psi_k\right)(\bar{x}^{k}),
\end{eqnarray*}
Combining this with \eqref{333.21} and \eqref{333.22}, and the fact that $\epsilon_k\rightarrow 0$, we arrive at
\begin{eqnarray}\label{333.23}
\lim_{k\rightarrow+\infty}\int_{B_{\frac{R_{\infty}}{m}}^c(\bar{x}^k)}\frac{w^{\tau_0}(y)}{|\bar{x}^{k}-y|^{\frac{N+1}{2}+s}e^{m|\bar{x}^{k}-y|}}dy
=\lim_{k\rightarrow+\infty}\int_{B_{\frac{R_{\infty}}{m}}^c(0)}\frac{w^{\tau_0}(x+\bar{x}^k)}{|x|^{\frac{N+1}{2}+s}e^{m|x|}}dx=0.
\end{eqnarray}

Now, let
$$u_k(x):=u(x+\bar{x}^k) \quad \mbox{ and } \quad w^{\tau_0}_k(x):=w^{\tau_0}(x+\bar{x}^k).$$
Since $\bar{x}^{k}\in\mathbb{R}^{N-1}\times[-\rho-1,\rho+1]$ and $u\in\bigcap_{H>0}C^{0,\alpha}(D_{H})$ for some $\alpha\in(0,1]$, by Arzel\`{a}-Ascoli theorem, up to extraction of a subsequence (still denoted by $u_{k}$ and $w^{\tau_0}_k$ respectively), we have
\begin{equation}\label{333.24}
  u_k(x)\rightarrow u_\infty (x) \quad \mbox { uniformly in } \mathbb{R}^N, \quad \mbox{ as } k\rightarrow+\infty,
\end{equation}
and hence by \eqref{333.23},
\begin{equation}\label{333.25}
  w^{\tau_0}_k(x):=u_k(x)-(u_k)^{\tau_{0}}\rightarrow  0 \quad \mbox { uniformly in } B_{\frac{R_{\infty}}{m}}^c(0), \quad \mbox{ as } k\rightarrow+\infty.
\end{equation}
Therefore, we have
\begin{equation}\label{333.26}
  u_\infty(x)-(u_\infty)^{\tau_0}(x)\equiv 0, \quad \forall \,\, x \in B_{\frac{R_{\infty}}{m}}^c(0).
\end{equation}

As a consequence of \eqref{333.26}, we deduce that, for any given $x\in B_{\frac{R_{\infty}}{m}}^c(0)$ and any $k\geq1$,
\begin{eqnarray}\label{333.28}
&&u_{\infty}(x)=u_\infty(x',x_N)=u_\infty(x', x_N+\tau_0)=u_\infty(x', x_N+2\tau_0)\\
&&\qquad\quad =\cdots=u_\infty(x', x_N+k\tau_0)=\cdots.\nonumber
\end{eqnarray}
Since $\bar{x}^{k}\in\mathbb{R}^{N-1}\times[-\rho-1,\rho+1]$, we obtain from \eqref{333.2} and \eqref{333.24} that
\begin{eqnarray}\label{333.27}
u_\infty(x', x_N)  \mathop {\longrightarrow }\limits_{{x_N\rightarrow \pm \infty}}\pm 1 \;\mbox{ uniformly w.r.t. }\; x'=(x_1, \cdots, x_{N-1}).
\end{eqnarray}
Combining \eqref{333.28} with \eqref{333.27}, we get, for any given $x\in B_{\frac{R_{\infty}}{m}}^c(0)$ with $x_{N}$ sufficiently negative,
\begin{equation}\label{333.29}
  -\frac{1}{2}>u_{\infty}(x)=u(x',x_{N})=\lim_{k\rightarrow+\infty}u_\infty(x', x_N+k\tau_0)=1,
\end{equation}
which is absurd. Hence \eqref{333.17} must hold.

\medskip

\emph{Part (ii).} Now, we are to show that, there exists an $\varepsilon\in(0,\tau_{0})$, such that
\begin{eqnarray}\label{333.30}
w^{\tau}(x)\leq0 \quad \text{in} \,\, \mathbb{R}^N, \quad \forall \,\, \tau\in[\tau_0-\varepsilon,\tau_0].
\end{eqnarray}

First, due to $u\in\bigcap_{H>0}C^{0,\alpha}(D_{H})$ for some $\alpha\in(0,1]$, \eqref{333.17} implies immediately that, there exists an $\varepsilon\in(0,\tau_{0})$ small enough such that
\begin{eqnarray}\label{333.31}
\sup_{\mathbb{R}^{N-1}\times[-\rho,\rho]}w^{\tau}(x)<0, \quad \forall \,\, \tau \in [\tau_0-\varepsilon,\tau_0].
\end{eqnarray}
Consequently, we only need to show that
\begin{eqnarray}\label{333.32}
\sup_{\mathbb{R}^{N}\backslash\left(\mathbb{R}^{N-1}\times[-\rho,\rho]\right)}w^{\tau}(x)\leq 0, \quad \forall \,\, \tau \in[\tau_0-\varepsilon,\tau_0].
\end{eqnarray}

We will prove \eqref{333.32} via the maximum principle in unbounded open sets (Theorem \ref{MP-Ubdd}). To this end, define
$$D:=\mathbb{R}^{N}\backslash\left(\mathbb{R}^{N-1}\times[-\rho,\rho]\right).$$
For any $\tau\in[\tau_0-\varepsilon, \tau_0]$, at the points $x\in D$ where $w^{\tau}(x)>0$, we have $u(x)>u^{\tau}(x)$. If $x \in \mathbb{R}^{N-1}\times(\rho,+\infty)$, we have $u(x)>u^{\tau}(x)\geq 1-\delta$. If $x \in \mathbb{R}^{N-1}\times (-\infty, -\rho)$, we have $-1+\delta\geq u(x)>u^{\tau}(x)$. Therefore, we can deduce from equation \eqref{333.1} and assumption \eqref{333.3} that, at points $x\in D$ where $w^{\tau}(x)>0$,
\begin{equation}\label{333.33}
(-\Delta+m^{2})^s w^{\tau}(x)=f(u(x))-f(u^{\tau}(x))=:c^{\tau}(x)w^{\tau}(x),
\end{equation}
where $c^{\tau}(x):=\frac{f(u(x))-f(u^{\tau}(x))}{u(x)-u^{\tau}(x)}$ satisfies
\[\sup_{\{x\in D \mid w^{\tau}(x)>0\}}c^{\tau}(x)<m^{2s}.\]
Combining this with \eqref{333.31}, and applying Theorem \ref{MP-Ubdd}, we arrive at \eqref{333.32} and hence \eqref{333.30} immediately. It is obvious that \eqref{333.30} leads to a contradiction with the definition of $\tau_0$. Thus we obtain \eqref{333.15}, that is, $\tau_{0}=0$.

\medskip

\emph{Step 3.} In this step, we will show that $u$ is strictly increasing with respect to $x_N$ and $u(x)$ depends on $x_N$ only.

From $\tau_{0}=0$ derived in Step 2, we deduce that
\begin{equation}\label{333.34}
w^\tau(x)\leq 0 \quad \mbox{ in } \,\, \mathbb{R}^N, \quad \forall \,\, \tau>0.
\end{equation}
For any $\tau>0$, if there exists some point $x^{\tau}\in\mathbb{R}^N$ such that ${w^\tau}(x^{\tau})=0$, then $x^{\tau}$ is a maximum point of $w^\tau$ in $\mathbb{R}^N$ and
\begin{equation}\label{333.35}
  (-\Delta+m^{2})^s{w^\tau}(x^{\tau})=f(u(x^{\tau}))-f({u^\tau}(x^{\tau}))=0.
\end{equation}
Lemma \ref{SMP-Ubdd} implies immediately that $w^{\tau}=0$ a.e. in $\mathbb{R}^{N}$, which contradicts the assumption \eqref{333.2}. Therefore, we must have
\begin{eqnarray*}
w^\tau(x)< 0 \quad \mbox{ in } \,\, \mathbb{R}^N, \quad \forall \,\, \tau >0.
\end{eqnarray*}
This indicates that $u$ is strictly increasing with respect to $x_N$.

\smallskip

Now we will show that $u(x)$ depends on $x_N$ only.

In fact, it can be seen from the above sliding procedure that the methods should still be valid if we replace $u^\tau(x):=u(x+\tau e_{N})$ by $u(x+\tau\nu)$, where $\nu=(\nu_1,\cdots,\nu_N)$ is an arbitrary vector such that $\langle\nu,e_{N}\rangle=\nu_{N}>0$. Applying similar sliding methods as in Step 1 and 2, we can derive that, for arbitrary such vector $\nu$,
$$u(x+\tau\nu)>u(x) \quad \text{in} \,\, \mathbb{R}^N, \quad \forall \,\, \tau>0.$$
Let $\nu_N\rightarrow0+$, from the continuity of $u$, we deduce that
$$u(x+\tau\nu)\geq u(x)$$
for arbitrary vector $\nu$ with $\nu_N=0$. By replacing $\nu$ by $-\nu$, we arrive at
$$u(x+\tau\nu)=u(x)$$
for arbitrary vector $\nu$ with $\nu_N=0$, this means that $u(x)$ is independent of $x'$, hence $u(x)=u(x_N)$. This concludes our proof of Theorem \ref{thm333-1}.
\end{proof}


\end{document}